\documentclass[10pt,a4paper]{article}

\usepackage[utf8]{inputenc}
\usepackage{amsmath,amsfonts,amssymb,mathtools,mathrsfs,%
enumitem}
\usepackage{amsthm}
\usepackage{optidef}
\usepackage{comment}
\usepackage{hyperref}
\usepackage[backend=biber,
sorting=nty,
maxnames=99
]{biblatex} 
\usepackage[margin=1in]{geometry}
\addbibresource{bibliography.bib}
\usepackage{authblk}

\newcommand{\easyineqslemma}{\cite[Lem.~1]{OurSufficientPaper}}

\newtheorem{theorem}{Theorem}
\newtheorem{proposition}[theorem]{Proposition}

\newtheorem{lemma}[theorem]{Lemma}
\newtheorem{corollary}[theorem]{Corollary}
\theoremstyle{remark}
\newtheorem{remark}[theorem]{Remark} 
\theoremstyle{definition}

\newtheorem{definition}[theorem]{Definition}
\newtheorem{assumpt}[theorem]{Assumption}

\usepackage[dvipsnames]{xcolor}
\definecolor{dkgreen}{rgb}{0,0.4,0}
\definecolor{dkblue}{rgb}{0.0,0,0.8}

\definecolor{dkgreen}{rgb}{0,0.4,0}
\definecolor{dkred}{rgb}{0.8,0.0,0}

\title{%
Young measure relaxation gaps for controllable systems with smooth state constraints
}

\date{}

\newcommand{\R}{\mathbb R}

\DeclareMathOperator{\dist}{\operatorname{dist}}

\newcommand{\domain}{\Omega}
\newcommand{\T}{T}
\newcommand{\controls}{U}
\newcommand{\g}{g}
\newcommand{\f}{f}
\newcommand{\F}{F}
\newcommand{\U}{U_\Box}
\newcommand{\cornersU}{U_{::}}
\newcommand{\initial}{{\mathbf{x}_0}}
\newcommand{\normalL}{L}
\newcommand{\nonconvexL}{{L^{\vee\!\vee}}}
\newcommand{\constantL}{{L^-}}
\newcommand{\convexL}{{{L^\vee}}}
\newcommand{\minarcs}{M_{\mathrm{y}}}
\newcommand{\mincurves}{M_{\mathrm{c}}}

\newcommand{\minoccupation}{M_{\mathrm{o}}}

\newcommand{\var}[1]{\widehat{#1}}
\newcommand{\variational}{V}
\newcommand{\variationalrelaxed}{\widehat{V}}

\newcommand{\ballbox}{\bar C}%
\newcommand{\ballboxsmall}{\bar c}%

\newcommand{\destination}{X}

\newcommand{\ad}{a_\delta}

\newcounter{Ccnt}
\makeatletter
\newcommand\C[1]{%
\@ifundefined{C-#1}%
  {\stepcounter{Ccnt}\expandafter\xdef\csname C-#1\endcsname{\arabic{Ccnt}}}%
  {}%
c_{\csname C-#1\endcsname}}
\makeatother 

\begin{document}

\author[1]{Nicolas Augier}
\author[1,2]{Milan Korda}
\author[2,3]{Rodolfo Rios-Zertuche}
\affil[1]{\footnotesize 
LAAS-CNRS, Toulouse, France
}
\affil[2]{\footnotesize 
Faculty of Electrical Engineering, Czech Technical University in Prague, Czechia
}
\affil[3]{\footnotesize Department of Mathematics and Statistics, UiT The Arctic University of Norway}

\maketitle

\begin{abstract}
In this article, we tackle the problem of the existence of a gap corresponding to 
Young measure relaxations for state-constrained optimal control problems. We provide a counterexample proving that a gap may occur in a very regular setting, namely for  a smooth controllable system state-constrained to the  closed unit ball, provided that the Lagrangian density (i.e., the running cost) is non-convex in the control variables. The example is constructed in the setting of sub-Riemannian geometry with the core ingredient being an unusual admissible curve that exhibits a certain form of resistance to state-constrained approximation. Specifically, this curve cannot be approximated by neighboring admissible curves while obeying the state constraint due to the intricate nature of the dynamics near the boundary of the constraint set. This example therefore demonstrates the impossibility of  Filippov--Wazewski type approximation in the presence of state constraints.
Our example also presents an occupation measure relaxation gap.
\end{abstract}
\tableofcontents

\section{Introduction}
\label{sec:intro}
\subsection{Main question}
A problem of theoretical and practical interest is: 
\begin{equation}\label{eq:question}
\text{When is $\mincurves=\minarcs$? }
\end{equation}
Here, $\mincurves$ denotes the minimal cost of an optimal control problem, with the infimum taken over measurable controls, and $\minarcs$ denotes its relaxation using Young measures; we will define these shortly.   

As we shall see, the Young measure relaxation is equivalent to the convexification of the associated differential inclusion (see Lemma \ref{lem:convexequiv}). 
Understanding when state constraints impede the approximation, by admissible curves in the interior, of a trajectory of the convexified differential inclusion that lies on the boundary, is a question of great theoretical importance. 
It has been tackled in \cite{frankowska2000filippov, bettiolfrankowska,V93,OurSufficientPaper} providing many precise estimates and sufficiency conditions for equality \eqref{eq:question} to hold. 
This question is important for motion planning of a multitude of state-constrained control systems, including even one of the most natural controllable systems, namely, the sub-Riemannian ones with linear control dependence.

On the more applied side, there exist algorithms based on moment-sums-of-squares hierarchies aiming to compute $\mincurves$  through the so-called occupation measure relaxation, whose optimal value coincides with $\minarcs$, and answering question \eqref{eq:question} would amount to understanding their applicability, as we explain at the end of this section.
In our paper \cite{OurSufficientPaper}, we give sufficient conditions for the absence of occupation measure relaxation gaps, which are shown to be equivalent to equality \eqref{eq:question}.
In the present work, we give  examples of systems that exhibit such a relaxation gap in circumstances that otherwise seem very natural. 
We will describe these examples in Section \ref{sec:examples}.

To clarify the meaning of question \eqref{eq:question} and to present our findings effectively, we will begin by outlining the problems of interest, along with the necessary notations and terminology.
Throughout, ``smooth'' will mean $C^\infty$.

Let $\domain$ be a subset of $\R^d$, and choose an initial point $\initial\in\domain$, a target set $\destination\subset\domain$ which will give the endpoint constraint, a fixed amount of time $\T>0$ and  a set of controls $\controls\subset\R^d$. Let also $\normalL\colon\R\times\domain\times\controls\to\R$ be a function that will serve as Lagrangian density or running cost, $\f\colon\R\times\domain\times\controls\to\R^d$ a map that will define the controlled vector field, and $\g\colon\R^d\to\R$ a function giving the final cost.

The original (classical) \emph{Bolza problem} that we want to consider and eventually relax is that of finding the minimal cost $\mincurves=
                \mincurves^{\normalL,\g,\controls}({\domain},\destination)$ given by
\begin{customopti}%
                {inf}{u}
                {\int_{0}^T \normalL(t,\gamma(t),u(t))\,dt+\g(\gamma(T))}%
                {\label{min:orig}}%
                {\mincurves=}
                \addConstraint{u\colon [0,\T]\to \controls \;\text{measurable}}
                \addConstraint{\gamma(t)=\initial+\int_0^tf(s,\gamma(s),u(s))\,ds\in\domain,\;t\in[0,\T]}
                \addConstraint{%
                \gamma(\T)\in\destination.}
\end{customopti}
Here we are minimizing the sum of the integral of $\normalL$ and the terminal cost given by $\g$ over all controls $u$ that determine admissible curves $\gamma$ in the domain $\domain$ that start at $\initial$ and end in the set $\destination$. 
These controls $u$ constitute the \emph{set of contenders} or \emph{feasible set}, of the problem; we will use these terms interchangeably.
When $g=0$, this is known as a \emph{Lagrange problem}, and when instead $L=0$, it is a \emph{Mayer problem}. 
It is well known that minimizers may fail to exist for the problem \eqref{min:orig} even under coercivity assumptions, as the lack of convexity may induce oscillatory behavior in curves approaching the minimal cost \cite{pedregal2}.

We will sometimes abbreviate the notation $\mincurves^{\normalL,\g,\controls}(\domain,\destination)$ by omitting the parameters that are clear from the context; for example, if it were clear that we were working with $\normalL$ and $\g$, we would instead write $\mincurves^{\controls}(\domain,\destination)$. The main part of this notation, that will not change, is $\mincurves$.

A \emph{Young measure} is a family $(\nu_t)_{t\in I}$ of Borel probability measures on the set of controls $U$ indexed by a parameter $t$ in an interval $I\subset \R$ and such that, for every Borel subset $A\subset U$, $t\mapsto\nu_t(A)$ is measurable, and  for every $x\in \Omega$ and every $t\in I$, the function $u\mapsto f(t,x,u)$ is $\nu_t$ integrable, that is,
\[\int_{U}\|f(t,x,u)\|d\nu_t(u)<+\infty.\] 
The relaxation we consider is the \emph{Young measure relaxation}
$\minarcs=
    \minarcs^{\normalL,\g,\controls}({\domain},\destination)$, namely,
\begin{customopti}%
    {inf}{(\nu_t)_{t\in[0,\T]}}
    {\int_{0}^\T \int_{\controls} \normalL(t,\gamma(t),u)\,d\nu_t(u)\,dt+\g(\gamma(\T))}{\label{min:relaxed}}{\minarcs=
    }
    \addConstraint{(\nu_t)_t \text{ a Young measure}}
    \addConstraint{\gamma(t)=\initial+\int_0^t\int_{U}f(s,\gamma(s),u)\,d\nu_s(u)\,ds\in\domain,\quad t\in[0,\T]}
    \addConstraint{%
    \gamma(\T)\in\destination.}
\end{customopti}
Here the contenders are the Young measures $(\nu_t)_{t\in[0,T]}$ with the additional constraint that they induce admissible integral curves $\gamma$ contained in $\domain$, starting at $\initial$ and ending in $\destination$. Unlike the controls $u$ contending in problem \eqref{min:orig}, the Young measures $(\nu_t)_t$ in \eqref{min:relaxed} naturally encode highly oscillatory behavior. An important motivation to use relaxed controls is that minimizers for this problem are known to exist under relatively mild conditions, such as bounds of the form $c(|u|^p-1)\leq L(t,x,u)\leq C(|u|^p+1)$ with $c,C>0$ and $p>1$ \cite[Th.~4.6]{pedregal2}, which are trivially verified if $U$ is compact and $L$ is continuous; see also \cite{pedregal,vinter-book,warga2014optimal} for other existence results.
Since a feasible control $u$ for problem \eqref{min:orig} is represented by the feasible Young measure $\nu_t=\delta_{u(t)}$, introducing the Young measure relaxation makes the infimum  $\minarcs\leq \mincurves$. However, it is possible to have a positive difference in some situations; this difference $\mincurves-\minarcs$ is known as a \emph{relaxation gap}.

We will be interested in identifying some situations where $\mincurves>\minarcs$, that is, where the gap $\mincurves-\minarcs$ is strictly positive.  It was predicted in \cite{palladino2014minimizers} that in this case the minimizers would be abnormal trajectories, in the terminology usually associated with the Pontryagin Maximum Principle. There are two known mechanisms for the appearance of such gaps: \begin{itemize}
    \item Reachability. Sometimes the target set is only reachable using Young measures and there are no curves $\gamma$ induced by ordinary controls $u$ joining the initial point to the target set, so we have $\minarcs<+\infty=\mincurves$, and the minimizing Young measure is an abnormal trajectory. Examples of this type were found in \cite{WARGA197541,palladino2014minimizers}.

    \item Failure of the constrained Filippov--Wa\v zewski approximation. In this case, the state constraints may cause the appearance of minimizing Young measures that cannot be approximated by curves as in the Filippov--Wa\v zewski theorem \cite{frankowska2000filippov}.

    To the best of our knowledge, no examples of this type have been found to date, except in rather contrived settings (see e.g.~\cite[\S1]{korda2022gap}). We will give examples of this type in a very regular setting, %
    where the occurrence of the relaxation gap may come as a surprise.  
\end{itemize}

\paragraph{Occupation measure relaxation.} 
A motivation to study the Young measure relaxation \eqref{min:relaxed} is that its optimal cost is equal to the optimal cost of another relaxation, called \emph{occupation measure relaxation} $\minoccupation$; in other words, we have $\minoccupation=\minarcs$ (see \easyineqslemma{} for details). This relaxation reads as follows:
\begin{customopti}
        {inf}{\mu,\mu_\partial}
    { \int_{[0,T]\times\Omega\times \controls} \normalL(t,x,u)\,d\mu(t,x,u)+\int_{\Omega}g(x)d\mu_\partial(x)}{\label{min:occupation}}{\hspace{-3cm}M_{\mathrm o}=
    }
    \addConstraint{\mu \text{ a %
    positive Borel measure on } [0,T]\times\domain\times\controls}
    \addConstraint{\mu_\partial \text{ a positive Borel measure on } X}
    \addConstraint{\int_{[0,T]\times\domain\times\controls}\|f(t,x,u)\|d\mu+\int_{\destination}\|x\|\,d\mu_\partial(x)<+\infty}
    \addConstraint{\text{and, for all $\phi\in C^\infty(\Omega)$}}
    \addConstraint{\int_{[0,\T]\times\domain\times\controls}\frac{\partial\phi}{\partial t}+\frac{\partial\phi}{\partial x}\cdot f(t,x,u)\,d\mu
    +\phi(0,\initial)-\int_\controls \phi(\T,x)d\mu_\partial(x)=0.}
\end{customopti}
The convexity properties of the set of relaxed occupation measures (i.e., measures feasible in~\eqref{min:occupation}) allow for turning the highly-nonlinear problem \eqref{min:orig} into a linear program %
for which there exist computationally tractable solution schemes \cite{augier2024symmetry,korda2018moments,OCP08}. These schemes rely on the the so-called moment-sums of squares hierarchies \cite{OCP08}, which are applicable when all the data of the problem are semi-algebraic.  The main focus of this paper is thus of great significance to the effectiveness of the algorithms described in \cite{OCP08,korda2018moments}, which compute $\minoccupation$ while aiming to find $\mincurves$.
Our counterexample shows that the applicability of algorithms relying on this relaxation fails in some cases.

\subsection{Main contributions}
\label{sec:examples}

In this paper we find several instances of gaps $\mincurves-\minarcs>0$ arising from failures of the Filippov-Wa\v zewski approximation that arise in highly regular settings and are caused solely by the interaction of the dynamics and the state constraint set.

As we explained above, to the best of our knowledge, examples of this type were only known in situations that seemed quite forced and artificial. 

While our main result is the example described below in Section \ref{sec:lagrangeintro}, we also include, for completeness, several variations whose relevance we will explain together with their statements below. Finally, we present also a statement formalizing what we mean by a failure of the Filippov-Wa\v zewski approximation.

\subsubsection{A gap in a Lagrange problem}\label{sec:lagrangeintro}
We expose here a new gap phenomenon which is entirely caused by the interplay of the dynamics with the state constraint $\Omega$, despite the considered control systems enjoy nice controllability properties. %

We are interested in building a family of examples exhibiting a relaxation gap on circumstances that seem natural: 
\begin{enumerate}[label=P\arabic*.,ref=P\arabic*]
    \item \label{it:first}The controlled vector fields are of the form $\f(t,x,u)=u_1f_1(x)+u_2f_2(x)$, where the couple of vector fields $(f_1,f_2)$ is smooth and Lie bracket generating, i.e., $f_1$ and $f_2$, together with the nested Lie brackets
    \[[f_1,f_2],\quad[f_1,[f_1,f_2]],\quad [f_1,[f_1,[f_1,f_2]]],\quad \dots,\quad \underbrace{[f_1,[f_1,[f_1,\dots,[f_1}_{d-2},f_2]]\dots],\vspace{-.3cm}\] 
    span $\R^d$ at every point.
    The associated sub-Riemannian structure induced by $f_1$ and $f_2$ is thus $C^\infty$, free, equi-regular, of rank 2; its  step is equal to $d-2$ and its growth vector is $(2,1,1,\dots,1)$ (see \cite{ABB20} for the definitions).
\item \label{it:stateconstraint} The state constraint $\overline\domain$  is the closure of an open ball $\domain=B(0,1)$ and the initial point is in its interior, $\initial \in\domain$.
\item \label{it:targetconstraint}
There is an open set $V$ such that the target set can be taken to be any nonempty subset $X\subset V\cap\overline\Omega$. 
\label{it:last}
    \end{enumerate}
\begin{theorem}[Gaps may occur in a Lagrange problem]
\label{thm:example}
Let $\Omega$ be the ball in $\R^d$, $d\ge 4$, $X\subset \domain$  a target set satisfying \ref{it:targetconstraint}, and $U$ be any subset of $\R^2$ containing $[-1,1]\times[-1,1]\subseteq U$. There is a Lagrangian density $\normalL\in C^\infty(\R^d \times U)$, a controlled vector field $\f$ induced by a rank two sub-Riemannian structure, a point $\initial\in \domain$ with the  properties \ref{it:first}--\ref{it:last} above, and (taking also $g=0$),
\begin{gather*}
    \mincurves(\overline{\domain},X) >\minarcs(\overline{\domain},X),\\
    \mincurves(\R^d,X)=\minarcs(\R^d,X).
\end{gather*}
\end{theorem}

This result will be proven in Section \ref{sec:regularityL}, where it is restated as Theorem \ref{thm:smoothlagrangian}. 
Note that the Lagrangian density $L$ is necessarily non-convex in the control variable, as it would otherwise contradict \cite[Theorem 2.3]{V93} (see also \cite[Theorem 2]{OurSufficientPaper}); it will be built as a four-well potential. 
To simplify the exposition, the proof of Theorem \ref{thm:example} will first consider a fixed endpoint and will  extend it to a set $X$ satisfying \ref{it:targetconstraint}; see Section \ref{sec:role_in_tar}.%

\subsubsection{A gap in a Bolza problem without terminal constraints}
Since the target set $X$ in Theorem \ref{thm:example} is open, we show in Section \ref{sec:targetset} that its role in the  appearance of the gaps of Theorems \ref{thm:example} and \ref{thm:secondexample} is not important. In that section \ref{sec:targetset} we present Theorem \ref{gap_withoutX}, in which the constraint that trajectories end in $X$ has been replaced with a penalization  with a suitable terminal cost $g$ while keeping the relaxation gaps. This contrasts with the situation of the example in \cite{palladino2014minimizers}, in which the initial and endpoint constraints seem to play a crucial role.

\begin{theorem}[Gaps may occur in a Bolza problem without terminal constraints]
\label{thm:example4}
Let $\Omega$ be the ball in $\R^d$, $d\ge 4$, and $U$ be any subset of $\R^2$ containing $[-1,1]\times[-1,1]\subseteq U$. There are a Lagrangian density $L$, a smooth terminal cost $g$,  a controlled vector field $\f$ induced by a rank two sub-Riemannian structure, a point $\initial\in \domain$, and (taking also $X=\overline \domain$),
\begin{gather*}
    \mincurves(\overline{\domain},\overline\domain) >\minarcs(\overline{\domain},\overline\domain),\\
    \mincurves(\R^d,\R^d)%
    =\minarcs(\R^d,\R^d).
\end{gather*}
\end{theorem}
This will be proved in Section \ref{sec:targetset}.

\subsubsection{Gaps in Mayer problems}

Next we give examples of gaps in of Mayer problems, which have no running cost, $L=0$. While the set of controls $U$ is convex, in these examples the vector fields $\f$ are \emph{not} associated to a sub-Riemannian structure and, for $x\in\overline\Omega$, the set $f(x,U)$ is not convex.

The following result comes in contrast with the example given by Palladino--Vinter \cite[Sec.~3]{palladino2014minimizers}, where they find a gap $\mincurves(\R^d,X)>\minarcs(\R^d,X)$ in a state-unconstrained situation with a closed target set $X$, and the gap can be explained by reachability issues.

\begin{theorem}[Gaps may occur in a Mayer problem with terminal constraints]
\label{thm:example2}
Let $\Omega$ be the ball in $\R^d$, $d\ge 5$, $X\subset \domain$  a target set satisfying \ref{it:targetconstraint}, and $U$ be any subset of $\R^2$ containing $[-1,1]\times[-1,1]\subseteq U$. There is a smooth terminal cost $g$, a controlled vector field $\f$, a point $\initial\in \domain$, and (taking also $L=0$),
\begin{gather*}
    \mincurves(\overline{\domain},X) >\minarcs(\overline{\domain},X),\\
    \mincurves(\R^d,X)=\minarcs(\R^d,X).
\end{gather*}
\end{theorem}
This will be proved in Section \ref{sec:vinterlike}.

\begin{theorem}[Gaps may occur in a Mayer problem without terminal constraints]
\label{thm:example3}
Let $\Omega$ be the ball in $\R^d$, $d\ge 5$, and $U$ be any subset of $\R^2$ containing $[-1,1]\times[-1,1]\subseteq U$. There is a smooth terminal cost $g$, a controlled vector field $\f$, a point $\initial\in \domain$, and (taking also $L=0$ and $X=\overline \domain$ or $X=\R^d$),
\begin{gather*}
    \mincurves(\overline{\domain},\overline\domain) >\minarcs(\overline{\domain},\overline\domain),\\
    \mincurves(\R^d,\R^d)%
    =\minarcs(\R^d,\R^d).
\end{gather*}
\end{theorem}

This will be proved in Section \ref{sec:targetset}.

\subsubsection{Filippov--Wa\v zewski breakdown phenomenon at the boundary}
\label{sec:FWintro}

Denote by $\mathcal S_{\overline\Omega}(x)$ (resp. $\mathcal S^{\mathrm r}_{\overline\Omega}(x)$) the set of curves $\gamma$ in $\overline\Omega$ starting at $x$ and solving the differential inclusion $\gamma'(t)\in \{f(t,\gamma(t),u):u\in U\}$ a.e.~$t\in[0,T]$ (resp. $\gamma'(t)\in \operatorname{conv}\{f(t,\gamma(t),u):u\in U\}$); see Section \ref{sec:FW} for details. 
Our result gives a counterexample to the Filippov-Wazewski relaxation Theorem with state constraint (see \cite[Corollary 3.2]{frankowska2000filippov}) in the case where one of its assumptions, namely the Soner-type inward pointing condition, is violated along a curve of the boundary of the state constraint.
It thus shows that there is no hope for the classical Filippov--Wa\v zewsky result to hold in the presence of state constraints without additional hypotheses.

\begin{theorem}\label{thm:FW-intro}
For $d\geq 5$, there exist a bounded open set $\domain \subset\R^d$ with smooth boundary, a non-convex compact set of controls $U\subset\R^2$, a smooth controlled vector field $\f$, a point $\initial \in\domain$, positive numbers $\delta, \varepsilon>0$, and a curve $\gamma$ in $\mathcal S^{\mathrm r}_{\overline\Omega}(\initial)\setminus \mathcal S_{\overline\Omega}(\initial)$ such that, for all $x\in\R^d$ with $\|x-\initial\|\leq \delta$ and all $\theta\in  \mathcal S_{\overline\Omega}(x)$,
\[\|\gamma-\theta\|_\infty\geq\varepsilon.\]
\end{theorem}

This will be proved and further discussed in Section \ref{sec:FW}.

\subsection{Ideas and organization of the paper}
\label{sec:mechanism}

\paragraph{Idea of the mechanism.} 
In order to obtain a gap, we look for a situation in which the minimizer cannot be approximated well enough by nearby admissible curves. To achieve this in the sub-Riemannian setting, our strategy is to start with a minimizer $\eta$ that simply follows one of the vector fields generating the sub-Riemannian structure, namely, $\eta'(t)=f_1(\eta(t))=(1,0,\dots,0)$; $\eta$ is thus just a straight line. Then construct $\domain$ to be diffeomorphic to a ball while spiraling around $\eta$ in a way that the sub-Riemannian velocity distribution at $\eta$ remains tangent to the boundary of the domain $\domain$ (see Figure \ref{fig:Omega}). 
This way, we cut away part of the space that would be necessary for admissible curves to approximate $\eta$ both in position and velocity (i.e., with similar controls and similar cost), and instead the spiral interacts with the sub-Riemannian structure to force admissible curves to become more oscillatory the closer they get to $\eta$ and their cost increases.
Also, while $\eta$ is an admissible curve, since the running cost has four wells in the control variable located at the points $(\pm1,\pm1)$, the cost of $\eta$ as a curve  with control $u=(1,0)$ is larger than the cost of the Young measure $\nu_t=\tfrac12(\delta_{(1,1)}+\delta_{(1,-1)})$ supported at two controls $(1,\pm1)$ inducing speeds $f_1\pm f_2$ whose average is $\eta'=f_1$.

\paragraph{Overview of the proofs and organization of the paper.} 
To prove Theorem \ref{thm:example}, we first describe the geometric objects involved in Section \ref{sec:setting}. Then, in Section \ref{sec:proofs} we prove an important auxiliary result, Theorem \ref{thm:gaps}, that is a version of Theorem \ref{thm:example} but with a non-smooth running cost. This is eventually replaced by a smooth running cost in Section \ref{sec:regularityL}, where the proof of Theorem \ref{thm:example} (restated as Theorem \ref{thm:smoothlagrangian}) is finished. 

Theorem \ref{thm:example2} is deduced from Theorem \ref{thm:smoothlagrangian} (restated as Theorem \ref{thm:secondexample}) in Section \ref{sec:vinterlike} using a well-known extension technique that we review in Section \ref{sec:nullLagrangian} that turns any Bolza problem into a Mayer problem. We also use the equivalence (reviewed in Section \ref{sec:convexificaton}) between the Young measure relaxation and the relaxation consisting in the convexification of the differential inclusion.

In Section \ref{sec:targetset} we deduce Theorems \ref{thm:example4} and \ref{thm:example3} (restated as Theorem \ref{gap_withoutX}); here the trick consists of leveraging a terminal cost that, upon optimizing, enforces the condition that the minimizers end in $X$. %

The failure of the Filippov-Wa\v zewski approximation is discussed in Section \ref{sec:FW}, where Theorem \ref{thm:FW-intro} is rephrased as Corollary \ref{coro:noFW}.

\paragraph{Acknowledgements.}
We are very grateful to Piernicola Bettiol, Fr\'ed\'eric Jean, Dario Prandi, Mario Sigalotti, and Ugo Boscain for enlightening discussions.

This work was co-funded by the European Union under the
project Robotics and advanced industrial production (ROBOPROX, reg.~no.~CZ.02.01.01/00/22\_008/0004590).
This work was partially supported by UiT Aurora Center for Mathematical
Structures in Computations MASCOT. It was also partially supported by the project Pure Mathematics in Norway funded by Trond Mohn Foundation and Tromsø Research Foundation. It was also partially supported by ANR--3IA Artificial and Natural Intelligence Toulouse Institute [ANR--19--PI3A--0004].
This research was also part of the programme DesCartes and is supported by the National Research Foundation, Prime Minister's Office, Singapore under its Campus for Research Excellence and Technological Enterprise (CREATE) programme.

\section{General setting}
\label{sec:setting}
Let $d\geq 4$ be the dimension of the state space.
Throughout the paper, we will use four parameters $\delta>0$, $0<a<1<b$, $1<\lambda<2$, with $ab>2\pi$ that will be further specified in Assumptions \ref{asm:deltaGamma}, \ref{ass:nonemptycontenders}, and \ref{asm:ballboxonOmega}.

In the introduction, we stated our theorems claiming that the domain $\domain$ was a ball. From this point on, however, we will use the same notation $\domain$ for a spiraling set that is only \textit{diffeomorphic} to a ball. We choose this setting because it is easier to visualize the mechanism, do the relevant calculations, and articulate the proofs; the reader should keep in mind that we are simply working up to diffeomorphism (indeed, all the objects in the theorems will have to be transformed by the diffeomorphism), and there should be no risk of confusion.

The setting described in this section corresponds to Theorems \ref{thm:example} and \ref{thm:example4}; the setting for Theorems \ref{thm:example2} and \ref{thm:example3} are closely related, passing through the conversion procedure described in Sections \ref{sec:vinterlike} and \ref{sec:nullLagrangian}.

\subsection[A Goursat-type sub-Riemannian structure]{A Goursat-type sub-Riemannian structure on {$\R^d$}}

Consider the $d$-dimensional Euclidean space $\R^d$ endowed with a rank 2 sub-Riemannian structure, defined by the vector fields
\begin{align*}
    f_1(x)&=(1,0,0,\dots,0)\in\R^d,\\
    f_2(x)&=(0,1,x_1,\tfrac12 x_1^2,\tfrac1{3!} x_1^3,\dots,\tfrac{1}{(d-2)!}x_1^{d-2}),
    \end{align*}
    so that the only nonzero brackets of $f_1$ and $f_2$ are
    \begin{align*}
        f_3(x)&\coloneqq[f_1,f_2](x)=(0,0,1,x_1,\tfrac12 x_1^2,\tfrac1{3!} x_1^3,\dots,\tfrac{1}{(d-3)!}x_1^{d-3}),\\
        f_4(x)&\coloneqq[f_1,f_3](x)=(0,0,0,1,x_1,\tfrac12 x_1^2,\tfrac1{3!} x_1^3,\dots,\tfrac{1}{(d-4)!}x_1^{d-4}),\\
        &\cdots\\\
        f_d(x)&\coloneqq[f_1,f_{d-1}](x)=(0,0,0,0,0,0,0,1).
    \end{align*}
    This is a sub-Riemannian structure of Goursat type with rank 2. 
    We associate to it the controlled vector field
    \begin{equation}\label{eq:def-srf}
        f(x,u)=u_1f_1(x)+u_2f_2(x).
    \end{equation}

\begin{definition}\label{def:horizontal}
    [$U$-horizontal curves and $U$-admissible Young measures]    
    Given a set $U\subset\R^2$, an absolutely continuous curve  $\gamma\colon I\to\R^d$ defined on an interval $I\subseteq\R$ is $U$-horizontal if
    there is a measurable function $u\colon I\to U$ such that $\gamma'(t) =f(\gamma(t),u(t))$ for almost every $t\in I$. 
    Similarly, we will say that the Young measure $(\nu_t)_t$ is \emph{$U$-admissible} if, for almost every $t$, the support of $\nu_t$ is contained in $U$.

    The sub-Riemannian arc length $\ell(\gamma)$ of a horizontal curve $\gamma$ with control $u$ is 
    \[\ell(\gamma)=\int_I\sqrt{u_1(t)^2+u_2(t)^2}dt.\]

\end{definition}

    The vector fields $\{f_1,f_2,\dots,f_d\}$ generate $\R^d$ at each point. %
    The Chow-Rashevskii theorem \cite[Th.~3.31]{ABB20} implies that, for any $U$ linearly spanning all of $\R^2$ (e.g., if $U$ contains a neighborhood of the origin), %
    the sub-Riemannian distance $\dist_U(x,y)$ between points $x,y\in\R^d$,
    \begin{equation*} 
        \dist_U(x,y) =\inf\left\{\ell(\gamma)\,\middle|\,
         \gamma\colon[0,T]\to\R^d\text{ $U$-horizontal},\;
        \gamma(0)=x,\;\gamma(T)=y,\;T>0\right\},
    \end{equation*}
    is finite and continuous. In fact, that theorem also implies that we can join every two points in $\R^d$ with a horizontal curve $\gamma$. %

\subsection[Definition of the domain]{Definition of the domain $\domain$}
\label{sec:domain}

    Let $\mathfrak p\colon[-a,a]\to[0,1]$ be a $C^\infty$ function equal to 1 on $[-a+\tfrac{1}{b^2},a-\tfrac{1}{b^2}]$ and equal to 0 for $t<-a+\tfrac{1}{b^3}$ and for $t>a-\tfrac{1}{b^3}$.
    We will use two curves, $\eta\colon\R\to\R^d$ and $\xi\colon\R\to\R^{d-1}$  given  by\footnote{In a first reading, the reader may assume that $\mathfrak p(t)=1$; this will only affect the smooth diffeomorphism result of Proposition \ref{prop:gammadelta}.}
    \begin{align}
        \label{eq:def-eta} \eta(t)&=(t,\underbrace{0,0,\dots,0}_{d-1})\in\R^d,\\
         \xi_b(t)&=\mathfrak p(t)(\underbrace{0,0,\dots,0}_{d-3},\tfrac1b\cos(bt),\tfrac1b\sin(bt)) %
        +(1-\mathfrak p(t))(\underbrace{0,0,\dots,0}_{d-3},\tfrac1b\cos(ab),\tfrac1b\sin(ab))\in\R^{d-1}.
        \label{eq:def-xi}
    \end{align}
    Observe that the curve $\xi_b$, when projected onto the plane of the last two coordinates $x_{d-1},x_d$, rotates around the origin $ab/2\pi$ times, and is constant in intervals $(-\infty,-a+\tfrac{1}{b^3}]$ and $[a-\tfrac{1}{b^3},\infty)$.
    
    Let 
    \[\ad=a+\delta,\]
    and, recalling $1<\lambda<2$,
    \[\mathbf x_0=\eta(-\lambda a),\qquad\mathbf x_1=\eta(
    \lambda a).\]
    We also let $A_t$ be a $d\times d$ matrix whose columns are $f_1\circ\eta(t),f_2\circ\eta(t),\dots,f_d\circ\eta(t)$; for example, for $d=4$, %
    it looks like this:
    \[
    A_t=(f_1\circ\eta(t),f_2\circ\eta(t),\dots,f_4\circ\eta(t))\\
    =\begin{pmatrix}
        1&0&0&0\\
        0&1&0&0\\
        0&t&1&0\\  
        0&\tfrac12t^2&t&1\\      
    \end{pmatrix}\in \mathrm{GL}_4(\R),\qquad t\in\R.
    \]
    Note that $A_t^{-1}=A_{-t}$.

    It will be useful to define the map $\varphi\colon\R^d\to\R^d$ given by
    \begin{equation}\label{eq:defvarphi}
        \textstyle\varphi(x)=A_{x_1}x
        =(x_1,x_2,x_1x_2+x_3,\tfrac12x_1^2x_2+x_1x_3+x_4,\dots,\sum_{j=2}^{d}\tfrac{1}{(d-j)!}x_1^{d-j}x_j),
    \end{equation}
    for any $x = (x_1,\ldots,x_d)\in \R^d$. Its inverse is
    \[\varphi^{-1}(x)=A_{-x_1}x,\qquad x\in\R^d.\]
    Note however that $\varphi$ is not a linear map. The role of the map $\varphi$ will be to straighten-up the coordinates of the boxes involved in the Ball-Box Theorem; cf.~Corollary \ref{coro:ballbox}.

    Let also $\pi\colon\R^d\to\R^{d-2}$ be the projection onto the plane $x_1=0=x_2$, that is, 
    \begin{equation}\label{eq:defpi}
        \pi(x)=(x_3,x_4,\dots,x_d),\qquad x\in\R^d.
    \end{equation}
    We define
    \begin{equation}
        \label{eq:defr}
        r(x)=\|\pi\circ\varphi^{-1}(x)\|=\sqrt{\varphi^{-1}(x)_3^2+\varphi^{-1}(x)_4^2+\dots+\varphi^{-1}(x)_d^2}, \qquad x\in\R^d.
    \end{equation}
    
    Let 
    \begin{align*}
        \mathbb S
    & = \textstyle\varphi\left(\bigcup_{x_1\in[-a,a]}\{x_1\}\times B^{d-1}(\xi_b(x_1),1/b)\right)\\
    &=\{\varphi(x):x\in\R^d,\; -a\leq x_1\leq a,\;\\
    &\qquad\qquad\qquad x_2^2+\dots+x^2_{d-2}+%
    (x_{d-1}-\tfrac1b\cos(bx_1))^2+(x_d-\tfrac1b\sin(bx_1))^2\leq 1/b^{2}\},
    \end{align*}
    where $B^{d-1}(p,r)\subset\R^{d-1}$ is the $(d-1)$-dimensional ball centered at $p$ of radius $r$, taken with respect to the Euclidean norm.

    \begin{proposition} \label{prop:gammadelta}
        There are $a,\delta>0$, $1<\lambda<2$, and an open set $\Gamma\subset(\R\setminus[-a,a])\times\R^{d-1}\subseteq\R^d$ such that the following are true:
        \begin{enumerate}[label=\roman*.,ref=(\roman*)]
         \item The set
    \begin{align}\label{eq:defOmega}
        \domain\coloneqq \mathbb S\cup \Gamma
    \end{align} 
    is diffeomorphic to the ball $B^d(0,1)$, contains $\mathbf x_0$ and $\mathbf x_1$, and its closure contains the image $\eta([-\ad,\ad])\subseteq\overline\domain$; refer to Figure \ref{fig:Omega}.
         \item Given a point $p$ in $\varphi(\{-a\}\times \overline B^{d-1}(\xi_b(-a),1/b))$ (respectively, a point $q$ in $\varphi(\{a\}\times \overline B^{d-1}(\xi_b(a),1/b))$) there is a horizontal curve $\gamma$ with controls in $\cornersU\coloneqq\{(\pm1,\pm1)\}$ of length at most $ \delta/2$ joining the endpoint $\mathbf x_0$ with $p$ (respectively, joining $q$ with $\mathbf x_1$), whose image is completely contained in $\Gamma\cup \varphi(\{a\}\times \overline B^{d-1}(\xi_b( a),1/b))\cup \varphi(\{-a\}\times \overline B^{d-1}(\xi_b(-a),1/b))$; in other words, the curve must stay in $\Gamma$ except perhaps at $p$ (resp. $q$), where it may touch the boundary of $\Gamma$.
    \end{enumerate}
    \end{proposition}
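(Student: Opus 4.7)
The plan is to construct $\Gamma$ as a smooth tubular extension of $\mathbb{S}$ past $x_1=\pm a$. A convenient enabling fact is that the cut-off $\mathfrak p$ vanishes on $[-a,-a+1/b^3]\cup[a-1/b^3,a]$, so $\xi_b$ is constant equal to $(0,\ldots,0,\tfrac1b\cos(ab),\tfrac1b\sin(ab))$ near both endpoints of $[-a,a]$; hence, in the $\varphi^{-1}$-coordinates, $\mathbb{S}$ is already a straight round cylinder of radius $1/b$ near its two endcaps, and smooth extension is geometrically straightforward. Concretely, I would pick $C^\infty$ functions $\chi\colon(-\ad,\ad)\to\R^{d-1}$ and $\rho\colon(-\ad,\ad)\to[0,\infty)$ with $\chi=\xi_b$ and $\rho\equiv 1/b$ on $[-a,a]$; $\chi$ held constant at $\xi_b(\pm a)$ and $\rho$ held at $1/b$ just outside $[-a,a]$ for smooth matching with $\mathbb{S}$; then $\chi$ bent so that $\chi(\pm\lambda a)=0$ (placing $\mathbf x_{0,1}=\eta(\pm\lambda a)$ on the tube's central axis); and $\rho$ finally tapered to $\rho(\pm\ad)=0$ with a spherical-endpoint profile (e.g.~$\rho(t)^2\sim\ad^2-t^2$ near the ends) so that the boundary is smooth. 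Setting $\tilde\Omega=\{(t,y)\in\R\times\R^{d-1}:|t|<\ad,\ \|y-\chi(t)\|<\rho(t)\}$, $\Omega=\varphi(\tilde\Omega)$, and $\Gamma=\Omega\cap\{|x_1|>a\}$, one verifies that $\tilde\Omega$ is a smooth open ``cigar'' diffeomorphic to $B^d(0,1)$, hence so is $\Omega$ since $\varphi$ is a global $C^\infty$-diffeomorphism of $\R^d$. Direct checks give $\mathbf x_{0,1}\in\Omega$ and $\eta([-\ad,\ad])\subset\overline\Omega$, establishing (i).

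For (ii), fix $p\in\varphi(\{-a\}\times\overline B^{d-1}(\xi_b(-a),1/b))$. The plan is to invoke the Uniform Ball-Box Theorem of \cite{jean2001uniform}: in the Goursat structure at hand the basis $f_1,\ldots,f_d$ has weights $(1,1,2,3,\ldots,d-1)$, so $\srball(\mathbf x_0,\epsilon)$ is commensurable, up to universal constants, with the $\varphi$-image of a privileged box of dimensions $\epsilon\times\epsilon\times\epsilon^2\times\cdots\times\epsilon^{d-1}$. Since $p$ is displaced from $\mathbf x_0$ by $(\lambda-1)a$ in the $f_1$-direction and by at most $2/b$ in each transverse direction, one obtains $\srdistR(\mathbf x_0,p)=O\bigl(\max((\lambda-1)a,\,(1/b)^{1/(d-1)})\bigr)$. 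Choosing $\lambda$ close to $1$ and $b$ large enough (the constraint $ab>2\pi$ is easily preserved by enlarging $b$ last), this bound is $\le\delta/2$. A sub-Riemannian geodesic realising it is then approximated by $\cornersU$-bang-bang controls using the classical bracketing construction (noting $\cornersU\supset\{\pm 1\}^2$), at the cost of a bounded multiplicative factor in the length absorbed into the constants above. Enlarging $\rho$ on $(-\ad,-a)$ as needed so that $\Gamma$ contains the entire displacement box of radius $\delta/2$, and arranging that the $x_1$-coordinate of the curve increases strictly monotonically from $-\lambda a$ to $-a$, we ensure its image lies in $\Gamma$ except at the terminal point $p$. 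The construction from $q$ to $\mathbf x_1$ is mirror-symmetric.

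The main obstacle is orchestrating three competing constraints simultaneously: smooth-ball topology for $\Omega$ (constraining $\chi,\rho$), sufficient width of $\Gamma$ in every privileged direction to host the horizontal approach curves whose transverse spread is prescribed by the Ball-Box Theorem, and the parameter compatibility $ab>2\pi$, $1<\lambda<2$, while leaving enough freedom to satisfy the later Assumptions~\ref{asm:deltaGamma}, \ref{ass:nonemptycontenders}, and \ref{asm:ballboxonOmega}. The saving observation is that the transverse widening needed is of order $\delta^k$ in the direction of weight $k$, which can be made arbitrarily small by choosing $\delta$ small, thereby not disturbing the overall diffeomorphism type of $\Omega$; so the hierarchy of choices ``$\lambda$ near $1$, $a$ and $\delta$ small, and $b$ sufficiently large'' settles every constraint at once.
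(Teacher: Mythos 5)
Your construction of $\Gamma$ in part (i) is essentially the paper's: a smooth tube whose profile matches the cylinder $\mathbb S$ at $x_1=\pm a$ (exploiting that $\xi_b$ is constant near the endcaps) and is capped off smoothly, with the diffeomorphism to the ball being routine; the paper does the same with a profile function $P$ and invokes Palais to pass from homeomorphic-with-smooth-boundary to diffeomorphic. The substantive divergence is in part (ii), and there your route has genuine problems. First, your handling of $\delta$ inverts the logic of the proof. In the paper, $\delta$ is a completely free parameter chosen \emph{last and large}: Chow--Rashevskii applied inside open connected subsets $D_\pm\subset\Gamma$ containing the slices $S_\pm$, together with compactness, gives a uniform bound $M$ on the length of $\cornersU$-horizontal connections, the cylindrical stretch $a\le|x_1|\le\lambda a$ is crossed by an explicit zigzag with controls $(1,\pm1)$ (formalized via Frankowska--Rampazzo, Th.~2.3), and then one simply takes $\delta\ge 2(M+2(\lambda-1)a)$, making the ``length at most $\delta/2$'' requirement trivial. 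You instead insist that $\delta$ be small and try to force the sub-Riemannian distance below $\delta/2$ via the Uniform Ball-Box Theorem; this is unnecessary, and your closing claim that the hierarchy ``$\lambda$ near $1$, $a$ and $\delta$ small, $b$ large'' also settles Assumption~\ref{ass:nonemptycontenders} is wrong in spirit, since that assumption asks for $\delta$ \emph{large} enough that the problems \eqref{min:1}--\eqref{min:5} have contenders at all.

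Second, the step that actually delivers the curve is asserted rather than proved. The Ball-Box Theorem bounds $\srdistR(\mathbf x_0,p)$, but it neither produces a $\cornersU$-horizontal (bang-bang) curve that hits the \emph{exact} prescribed endpoint $p$ of the endcap, nor one that stays inside the narrow set $\Gamma$, nor one with $x_1$ strictly monotone. Exact endpoint attainment with controls restricted to $\{(\pm1,\pm1)\}$ and $u_1\equiv 1$ is a nontrivial exact-controllability statement for the Goursat system that you invoke as ``the classical bracketing construction'' without argument; moreover the monotonicity you impose creates a quantitative coupling between $\lambda$ and $b$ (in time $(\lambda-1)a$ of monotone motion the reachable displacement in the weight-$(d-1)$ coordinate is $O\bigl(((\lambda-1)a)^{d-1}\bigr)$, which must dominate $2/b$), so your parameters cannot be tuned independently in the order you state unless $b$ is indeed chosen after $\lambda$, a point you gloss over. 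The paper sidesteps all of this: exact reachability inside $\Gamma$ comes from Chow--Rashevskii applied to the symmetric bracket-generating family within the open sets $D_\pm$, uniformity from compactness of the endcaps, and the confinement question is reduced to the cylinder-crossing handled by the cited Filippov-type theorem. If you replace your quantitative argument for (ii) by this soft Chow-plus-compactness argument and let $\delta$ be chosen large at the end, your construction of $\Gamma$ goes through.
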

    \begin{assumpt}\label{asm:deltaGamma}
We will assume that $a$, $\lambda$, $\Gamma$, and $\delta$ are as in Proposition \ref{prop:gammadelta}.
\end{assumpt}
    \begin{proof}[Proof of Proposition \ref{prop:gammadelta}]        
        Let $P\colon[a,(2\lambda-1) a]\to [0,2/b]$ be a smooth function with $P(a)=1/b$, $P'(a)=0$, $P'(t)>0$ for $a<t<\lambda a$, $P'(t)<0$ for $\lambda a<t<(2\lambda-1)a$, $P((2\lambda-1)a)=0$, and $\lim_{t\nearrow (2\lambda-1)a}P'(t)=-\infty$, and such that the inverse of the restriction $(P|_{(\lambda a,(2\lambda-1)a)})^{-1}$ is smooth; refer to Figure \ref{fig:P}.
        
        Take $\Gamma$ to be the set
        \begin{align*}
            \Gamma=&\{\varphi(t,x): a\leq t\leq (2\lambda-1)a,\; x\in  B^{d-1}(\xi_b(t),P(t))\}\\
            &\cup \{\varphi(t,x):  -(2\lambda-1)a\leq t\leq -a,\; x\in  B^{d-1}(\xi_b(t), {P(-t)})\}.
        \end{align*}
        By the properties above, $\Gamma\cup \mathbb S$ is a set with smooth boundary that contains $\mathbf x_0$ and $\mathbf x_1$. Since the boundary of $\Gamma\cup \mathbb S$ is smooth , and from the construction it is clear that this set is homeomorphic to the ball $B^d(0,1)$; by \cite[Thm.~C]{palais}, these two sets are also diffeomorphic. 
        Moreover, $\Gamma$ contains connected open sets $D_+$ and $D_-$ that, respectively, contain the sets $S_\pm=\varphi(\{\pm \lambda a\}\times \overline B^{d-1}(\xi(\pm\lambda a),1/b)) $. It follows from Chow's theorem that there are $\cornersU$-horizontal curves within $D_-$ (within $D_+$) joining $\mathbf x_0$ ($\mathbf x_1$) with every point of $S_-$ ($S_+$), in an amount of time that can be uniformly bounded from above, say by $M>0$. Also, if $1<\lambda<2$ is taken close to 1 (i.e., if we take $\lambda-1$ small), then the image 
        \[
            \varphi\left(\bigcup_{t\in[-\lambda a,-a]\cup[a,\lambda a]}\{t\}\times \overline B^{d-1}\left(\xi_b(\operatorname{sgn}(t) \lambda a),\frac{1}{b}\right)\right)
        \]
        of the pair of cylinders 
        is contained in $\Gamma$, so
        every point in $S_\pm$ can be joined to a point in $\mathbb S$ in time $\leq 2(\lambda-1)a$, by alternating controls $(1,1)$ and $(1,-1)$ in $\cornersU$ to go in the approximate direction of $f_1$; this can be easily formalized using \cite[Th.~2.3]{frankowska2000filippov}. This means that we can take any $\delta\geq 2(M+ 2(\lambda-1)a)$ to fulfill the statement of the proposition.
    \end{proof}
    \begin{figure}[t]
\centering
\includegraphics[width=7cm%
]{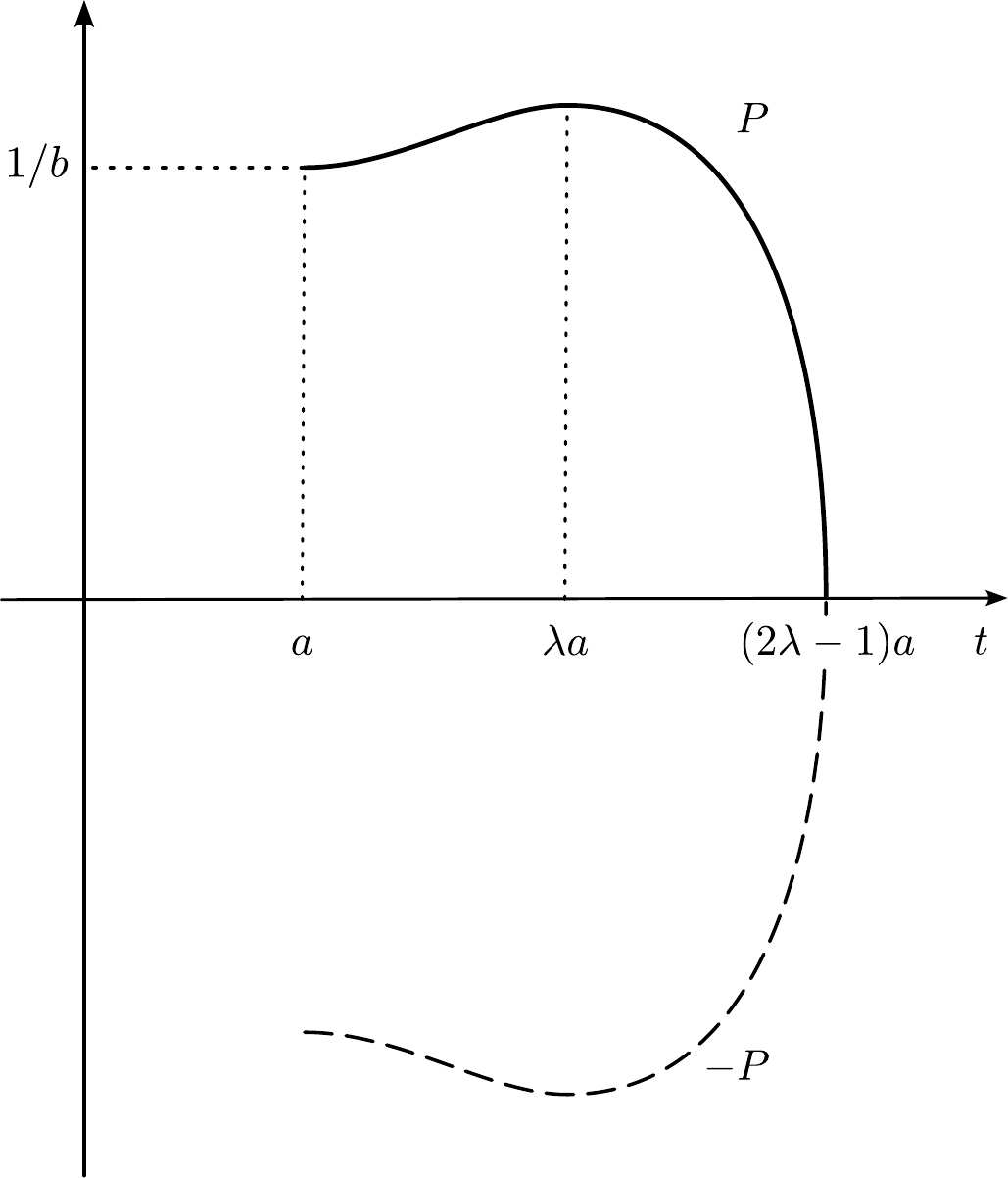}
\caption{The function $P$  in the proof of Proposition \ref{prop:gammadelta}, and its negative $-P$, which together delineate the smooth profile of $\Gamma$.}
\label{fig:P}
\end{figure}

    Observe that, for $x\in\overline{\mathbb S}$, having $r(x)=0$ implies also $x_2=0$ (but not viceversa). Also, focusing on the function $\eta$ defined in \eqref{eq:def-eta}, we note that the image of $\eta|_{[-a,a]}$ is contained in $\overline{\mathbb S}\subset\overline\domain$.

    \begin{figure}[t]
\centering
\includegraphics[width=\textwidth%
]{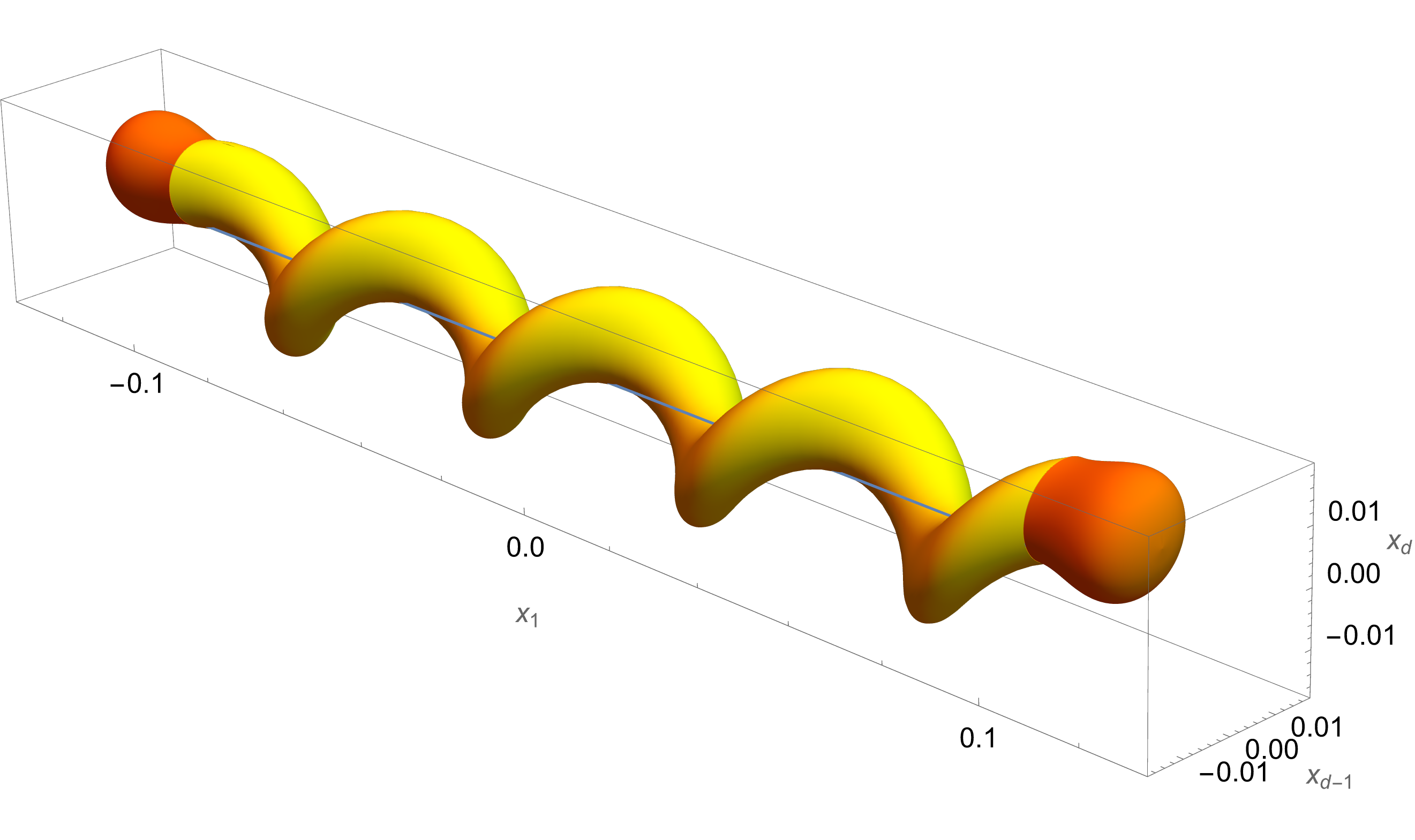}
\caption{Image of the domain $\domain\subseteq\R^d$ under the projection $x\mapsto(x_1,x_{d-1},x_d)\in\R^3$. For this illustration, we set $a=1/10$ and $b=40\pi$. The set $\mathbb S$ is portrayed in yellow, and $\Gamma$, in orange. Although the image seems to show a singularity in the boundary between the orange and yellow parts, this is actually smooth because $\mathfrak p$ is $C^\infty$. We have also drawn the image $\eta([-\ad,\ad])$ in blue. %
} 
\label{fig:Omega}
\end{figure}

\subsection{The role of initial and target sets.}
\label{sec:role_in_tar}

For simplicity, in our discussion we will take the initial and target sets to be the single points $\initial$ and $\mathbf x_1$, so that 
\[\destination=\{\mathbf x_1\}.\]
Remark however that the initial and target sets can be replaced by any subsets of $\Gamma\cap\{x_1<-a-\epsilon\}\subset\domain$ and $\Gamma\cap\{x_1>a+\epsilon\}\subset\domain$, respectively, %
as long as they contain $\initial$ and $\mathbf x_1$ and $\epsilon>0$ is small (as determined by the proof of Theorem \ref{thm:smoothlagrangian}), without undermining the results and
with very little modification to the arguments we give in the proofs.
This is because the Lagrangian densities involved in the proofs, namely, $\nonconvexL$, $\convexL$, and $\bar L_\epsilon$  (to be defined below) all vanish on $\Gamma\cap \{|x_1|>a+\epsilon\}$, so what happens there is mostly immaterial to the optimization problems, and the gap is only caused by the requirement to turn along the spiral defined by $\Omega$.

Observe that, taking a diffeomorphism of the ball $B\subset\R^d$ into $\domain$ (as was shown to exist in Proposition \ref{prop:gammadelta}) that takes $V\cap \overline B$ into $\Gamma\cap\{x_1>a+\epsilon\}$, guarantees that property \ref{it:targetconstraint} will be satisfied.

\subsection{Lagrangian densities and sets of controls}\label{sec:lagrangians_controls}

     Let 
    \[\U=[-1,1] \times[-1,1] \] 
    be the square centered at the origin, and notice that the notation is motivated by the shape of the set. 
    In the following we will also fix a closed, convex set $U$ satisfying 
    \[\U\subseteq U\subseteq \R^2.\]
    The set $U$ may be unbounded.%

    Let also, for $x\in\R^d$ with $-a\leq x_1\leq a$ and for $u=(u_1,u_2)\in\R^2$,
    \begin{equation}\label{eq:deflagrangians}
    \begin{aligned}
        \convexL(x,u)&=\chi_{[-a,a]}(x_1)\left(\|u\|_\infty%
        +r(x)^2\right),\\
        \nonconvexL(x,u)&=\chi_{[-a,a]}(x_1)\left(1+2\Delta(u)+r(x)^2\right),
    \end{aligned}
    \end{equation}
    where  $r(x)$ is defined in~\eqref{eq:defr} and
    \begin{equation}\label{eq:asterisk}
    \begin{aligned}
        \|u\|_\infty&=\max(|u_1|,|u_2|),\\
        \Delta(u)&=\dist(u,\{(\pm1,\pm1)\})=\min_{i,j\in\{-1,+1\}}\dist(u,(i,j)),\\
        \chi_{[-a,a]}(x_1)&=\begin{cases}
            1,&-a\leq x_1\leq a,\\
            0,&\text{otherwise.}
        \end{cases}
    \end{aligned}
    \end{equation}
    The function $\Delta$ is the distance to the corners of the square $\U$, so it has four minima or `wells.'
    Observe that 
    $\convexL(x,u)\leq \nonconvexL(x,u)$,
    and that $\convexL$ %
    is convex in $u$, while $\nonconvexL$ is not. 
    The notation is motivated by the shape of the graph of each of the functions, for fixed $x$, as seen from the side: %
    $\convexL(x,\cdot)$ has a single minimum and its graph looks like a cone, outlining a $\vee$; and $\nonconvexL(x,\cdot)$ has four wells, and seen from the side we can see two of them, so the lower contour of the graph looks like $\vee\!\vee$.

 The proof of Theorem \ref{thm:example} will involve a smooth approximation to $\nonconvexL$, as we will explain in  Section \ref{sec:regularityL}. We first need a preliminary result, Theorem \ref{thm:gaps}, involving $\nonconvexL$ directly; we describe this next. The reason to define $\convexL$ and $\U$ is that the proof of Theorem \ref{thm:gaps} heavily relies on  comparisons between problems involving $\nonconvexL$ and $U$ and analogous problems involving $\convexL$ and $\U$.

\section{Preliminary auxiliary results}
\label{sec:proofs}
\subsection{Optimization problems compared}    \label{MAIN_RESULT}

    We will state an important assumption and the main result of this section. The definitions of the objects involved in Theorem \ref{thm:gaps}, as well as a discussion of its significance, and its proof, can be found below. The notations change slightly from those of \eqref{min:orig}--\eqref{min:relaxed}, %
    so we make all definitions explicit.

    Since $\domain$ is a connected open set and the sub-Riemannian structure is equiregular, any two points in it can be joined with a horizontal curve \cite{jean2003entropy}, so we may make the following assumption.
    \begin{assumpt}\label{ass:nonemptycontenders}
        We assume that $\delta>0$ is large enough such 
        that there exists an absolutely continuous curve $\gamma\colon[-\ad,\ad]\to \domain$, $\gamma(-\ad)=\initial$, $\gamma(\ad)\in \destination$, with $\gamma'(t)\in f(t,\gamma(t),\U)$ for almost every $t\in[0,T]$.
    \end{assumpt}   
    \begin{remark}[Regarding reachability]
        Assumption \ref{ass:nonemptycontenders} is equivalent to saying that the set of contenders of problem \eqref{min:6} below is nonempty.
        Since a contender for problem \eqref{min:6} will also be represented among the contenders of problems \eqref{min:1}, \eqref{min:2}, and \eqref{min:5} below,  all the quantities $\mincurves(\nonconvexL,U)$,  $\minarcs(\nonconvexL,U)$, $\mathring \mincurves(\convexL,\U)$, $\mincurves(\convexL,\U)$ defined by those problems are finite. The reader will find that Assumption \ref{ass:nonemptycontenders} has the same effect for the problems of Section \ref{sec:regularityL} involving the Lagrangians $\bar L_\epsilon$ and $L$. This also means that Assumption \ref{ass:nonemptycontenders} does away with all reachability issues for all those problems.
    \end{remark}

  In Theorem \ref{thm:gaps} we compare the following optimization problems: 
    \begin{itemize}
        \item Non-convex Lagrangian and convex set of controls, optimization over horizontal curves: 
            \begin{customopti}%
                {inf}{u}
                {\int_{-\ad}^{\ad} \nonconvexL(\gamma(t),u(t))\,dt}{\label{min:1}}{%
                \mincurves(\nonconvexL,U)=}
                \addConstraint{u\colon[-\ad,\ad]\to U\;\text{measurable}}
                \addConstraint{\gamma'(t)=f(t,\gamma(t),u(t))\text{ a.e.~$t$}}
                \addConstraint{\gamma(t)\in\overline\domain\;\text{for all $t$}}
                \addConstraint{\gamma(-\ad)=\initial,\;\gamma(\ad)\in\destination}
            \end{customopti}
        \item Non-convex Lagrangian and convex set of controls, optimization over Young measures:
            \begin{customopti}%
                {inf}{(\nu_t)_t}
                {\int_{-\ad}^{\ad}\int_{U} \nonconvexL(\gamma(t),u)\,d\nu_t(u)\,dt}{\label{min:2}}{%
                \minarcs(\nonconvexL,U)=}
                \addConstraint{\text{$(\nu_t)_t$ is a Young measure on $U$}}
                \addConstraint{\text{$\gamma'(t)=\int_Uf(t,\gamma(t),u)\,d\nu_t(u)$ a.e.~$t$}}
                \addConstraint{\gamma(t)\in\overline\domain\;\text{for all $t$}}
                \addConstraint{\gamma(-\ad)=\initial,\;\gamma(\ad)\in\destination}
            \end{customopti}

   \end{itemize}

\begin{theorem}\label{thm:gaps}
    In the situation defined in the previous section \eqref{eq:def-srf} and \eqref{eq:defOmega}, and with Assumptions \ref{asm:deltaGamma}, \ref{ass:nonemptycontenders}, and \ref{asm:ballboxonOmega}, we have
    \begin{equation}
        \label{eq:minmain} \mincurves(\nonconvexL,U)>\minarcs(\nonconvexL,U)
        \qquad\text{i.e., \eqref{min:1}}>\text{\eqref{min:2}},
    \end{equation}
\end{theorem}

  For the proof of the theorem, we will need to consider these two auxiliary problems  (with \eqref{min:6} being less standard):
    \begin{itemize}
        \item Convex Lagrangian, compact and convex set of controls, optimization over horizontal curves $\gamma$ that are almost never at $r(\gamma)=0$ (equivalently, $\|\pi(\gamma(t))\|>0$ a.e. $t\in[-\ad,\ad]$): %
              \begin{customopti}
                {inf}
                {u}
                {\int_{-\ad}^{\ad} \convexL(\gamma(t),u(t))\,dt}{\label{min:6}}{%
                \mathring\mincurves(\convexL,\U)=}
                \addConstraint{u\colon[-\ad,\ad]\to\U\;\text{measurable}}
                \addConstraint{\gamma'(t)=f(t,\gamma(t),u(t))\;\text{a.e.~$t$}}
                \addConstraint{\gamma(t)\in\overline\domain\;\text{for all $t$}}
                \addConstraint{\gamma(-\ad)=\initial,\;\gamma(\ad)\in\destination}
                \addConstraint{r(\gamma(t))\neq 0 \;\text{for almost every $t\in[-\ad,\ad]$.}}
            \end{customopti}
        \item Convex Lagrangian, compact and convex set of controls, optimization over horizontal curves (possibly staying for a while in $r(\gamma)=0$):  
              \begin{customopti}
                {inf}
                {u}
                {\int_{-\ad}^{\ad} \convexL(\gamma(t),u(t))\,dt}{\label{min:5}}{%
                \mincurves(\convexL,\U)=}
                \addConstraint{u\colon[-\ad,\ad]\to\U\;\text{measurable}}
                \addConstraint{\gamma'(t)=f(t,\gamma(t),u(t))\;\text{a.e.~$t$}}
                \addConstraint{\gamma(t)\in\overline\domain\;\text{for all $t$}}
                \addConstraint{\gamma(-\ad)=\initial,\;\gamma(\ad)\in\destination}
            \end{customopti}
    \end{itemize}

    \begin{proof}[Proof of Theorem \ref{thm:gaps}]

    From the lemmas in Section \ref{sec:curveequalities}, there are $0<a<1<b$ 
    such that the following are true:
    \begin{gather}
        \label{eq:curveequalities} \mincurves(\convexL,\U)= 2a,
        \qquad \minarcs(\nonconvexL,U)= 2a,
    \end{gather}
    and, under the (ultimately false) assumption that $\mincurves(\nonconvexL,U)=\minarcs(\nonconvexL,U)$, we also have
    \begin{equation}\label{eq:conditionalone}
        \mathring \mincurves(\convexL,\U)\leq \mincurves(\nonconvexL,U).
    \end{equation}
    In Section \ref{sec:mininterior} we will show that, if $d\geq 4$,
    \begin{equation}
        \label{eq:hardone}
         2a%
         <\mathring \mincurves(\convexL,\U).
        \end{equation}
    From these, it follows immediately that
        \begin{equation}
        \label{eq:mininterior} \mathring \mincurves(\convexL,\U)>\mincurves(\convexL,\U) 
        \qquad \text{i.e., \eqref{min:6}}>\text{\eqref{min:5}}. 
    \end{equation}
     To obtain \eqref{eq:minmain} one may reason as follows: either \eqref{eq:minmain} is true, or if not, then \eqref{eq:conditionalone} holds, and one obtains a contradiction using \eqref{eq:curveequalities} and \eqref{eq:hardone}.
     \end{proof}

\begin{remark}[Gap with non convex controls]
     The result of Theorem \ref{thm:gaps} can be directly adapted to show a gap in a Lagrange problem with a nonconvex control set, which is not the main focus of this paper but is also interesting.
     Consider the set of controls
     \[\cornersU=\{(\pm1,\pm1)\}\subset \R^2\]
     with the running cost given by 
     \[\constantL(x,u)=\chi_{[-a,a]}(x_1)(1+r(x)^2),\]
     again with notations inspired by the shape of the set $\cornersU$, which consists of the corners of a square, and by the look of the graph of $\constantL(x,\cdot)$ from the side, which is simply a horizontal plane.
     These naturally induce the following two problems:
         \begin{itemize}
                     \item Non-convex set of controls, optimization over horizontal curves:            
                \begin{customopti}%
                    {inf}{u}
                    {\int_{-\ad}^{\ad} \constantL(\gamma(t),u(t))\,dt}{\label{min:3}}{%
                    \mincurves(\constantL,\cornersU)=}
                    \addConstraint{\text{$u\colon[-\ad,\ad]\to \cornersU$ measurable}}
                    \addConstraint{\gamma'(t)=f(t,\gamma(t),u(t))\;\text{a.e.~$t$}}
                    \addConstraint{\gamma(t)\in\overline\domain\;\text{for all $t$}}
                    \addConstraint{\gamma(-\ad)=\initial,\;\gamma(\ad)\in\destination}
                \end{customopti}
            \item
                Non-convex set of controls, optimization over Young measures:
                \begin{customopti}
                    {inf}{(\nu_t)_t}
                    {\int_{-\ad}^{\ad}\int_{\cornersU} \constantL(\gamma(t),u)\,d\nu_t(u)\,dt}{\label{min:4}}{%
                    \minarcs(\constantL,\cornersU)=}
                    \addConstraint{\text{$(\nu_t)_t$ a Young measure on $\cornersU$}}
                    \addConstraint{\gamma'(t)=\int_{\cornersU}f(t,\gamma(t),u\,d\nu_t(u)\;\text{a.e.~$t$}}
                    \addConstraint{\gamma(t)\in\overline\domain\;\text{for all $t$}}
                    \addConstraint{\gamma(-\ad)=\initial,\;\gamma(\ad)\in\destination}
                \end{customopti}
        \end{itemize}
        To guarantee that the sets of contenders are nonempty, and hence that these numbers are finite, it may be necessary to take $\delta>0$ larger than as stated in Assumption \ref{ass:nonemptycontenders}; let us assume that this is the case.
        Then we have the following gap:
                \begin{equation}\label{eq:minU} 
            \mincurves(\constantL,\cornersU)>\minarcs(\constantL,\cornersU)
            \qquad\text{i.e., \eqref{min:3}}>\text{\eqref{min:4}}.
        \end{equation}
        Let us show why this is true.
        First, we have that $\minarcs(\constantL,\cornersU)\leq 2a$
        by the same argument as in Lemma \ref{lem:2}. From Lemma \ref{lem:8} with $Y=\constantL$, it follows that $\minarcs(\constantL,\cornersU)\geq 2a$, so in fact $\minarcs(\constantL,\cornersU)= 2a$. 
        We also have 
        \[\mathring \mincurves(\convexL,\U)\leq \mincurves(\constantL,\cornersU)\]
        because
        the set of contenders for \eqref{min:3} is contained in the set of contenders for \eqref{min:6}; this follows from the facts that $\cornersU\subset \U$ and that an absolutely continuous curve $\gamma$ satisfying $r(\gamma(t))=0$ for $t$ in some subset of positive measure cannot be a contender in \eqref{min:3}: such a curve $\gamma$ must satisfy $\gamma'(t)=c(t)f_1(t)$ in a set of positive measure, for some measurable function $c$; this means that its control must be of the form $(c(t),0)\notin \cornersU$, contradicting the conditions in \eqref{min:3}.
        The gap \eqref{eq:minU} then follows from \eqref{eq:curveequalities} and \eqref{eq:mininterior}.
\end{remark}

\subsection{Results needed for the proof of Theorem \ref{thm:gaps}}
\label{sec:proof}

Section \ref{sec:curveequalities} collects the technical lemmas needed to deduce equations \eqref{eq:curveequalities} and \eqref{eq:conditionalone}, which are significantly easier than \eqref{eq:hardone}, to which the rest of the section is devoted.

The setting we describe in Section \ref{sec:setting} features a domain $\domain$ that spirals around the straight curve $\eta$ that turns out to be the integral curve of the relaxed minimizer. 
The sub-Riemannian structure makes it very hard to traverse the spiral through any curve other than $\eta$. %
A curve $\gamma$ that almost never coincides with $\eta$ must go around the spiral, and in Sections \ref{s:ball} and \ref{sec:costofturning} we show that this must make its cost significantly larger than that of $\eta$, even as $\eta$ and $\gamma$ may be arbitrarily close to each other. 

In Section \ref{sec:costofturning}, most of the work is devoted to ensuring that we can apply a result in topology known as the Dog-on-a-Leash Theorem \cite[Th.~3.11]{fulton2013algebraic}, which we use to formalize the intuitive notion that, if a curve $\gamma$ traverses a domain that spirals $k$ times around $\eta$, then the curve necessarily rotates at least $k$ times around $\eta$. Once we have established that fact, we also obtain some estimates for the cost by applying the Uniform Ball-Box Theorem \cite{jean2001uniform}. Qualitatively, the state constraint $\Omega$ forces the approximating trajectories $\gamma$ to achieve rotations in the $(x_{d-1},x_d)$-plane  along the spiral, and provides a fixed lower bound for the corresponding winding number given in Lemma~\ref{lem:mustturn}. Since the latter plane is spanned by the directions given by Lie brackets of highest orders (also known as the nonholonomic directions), it turns out that the cost of turning becomes too expensive in order to have a suitable approximation of the curve $\eta$.

In Section \ref{sec:aprioribounds} we prove some a priori estimates that show that any curve that is almost a minimizer must be very close to $\eta$. We conclude the proof of \eqref{eq:hardone} in Section \ref{sec:mininterior}.

\subsubsection[Proof of two equations]{Proof of %
equations \texorpdfstring{\eqref{eq:curveequalities}}{} %
and \eqref{eq:conditionalone}{}}
\label{sec:curveequalities}

The equations in \eqref{eq:curveequalities} will be settled in Lemmas \ref{lem:1}, \ref{lem:2}, and \ref{lem:8}. Inequality \eqref{eq:conditionalone} will be dealt with in Lemma \ref{lem:4}.

\begin{lemma}\label{lem:1}
    $\mincurves(\convexL,\U)\leq 2a$.
\end{lemma}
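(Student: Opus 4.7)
The plan is to exhibit an explicit $\U$-horizontal contender for problem \eqref{min:5} whose cost is exactly $2a$. The natural candidate is to use the straight curve $\eta$ on $[-a,a]$ and to connect it to the endpoints $\mathbf{x}_0,\mathbf{x}_1$ through the region $\Gamma$, where $\convexL$ vanishes.

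Concretely, I would first verify that on the interval $[-a,a]$ the curve $\eta(t)=(t,0,\dots,0)$ is $\U$-horizontal with the constant control $u(t)=(1,0)\in\U$, since $f(\eta(t),(1,0))=f_1(\eta(t))=(1,0,\dots,0)=\eta'(t)$. A direct computation using $\varphi^{-1}(\eta(t))=A_{-t}\eta(t)=(t,0,\dots,0)$ gives $r(\eta(t))=\|\pi\circ\varphi^{-1}(\eta(t))\|=0$, so $\convexL(\eta(t),(1,0))=\chi_{[-a,a]}(t)(\|(1,0)\|_\infty+0)=1$, producing a contribution of $2a$ to the integral cost.

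Next I would use Proposition \ref{prop:gammadelta}(ii), noting that $\eta(\pm a)=\varphi(\pm a,0,\dots,0)$ lies in the closed ball $\varphi(\{\pm a\}\times\overline{B^{d-1}}(\xi_b(\pm a),1/b))$ (at Euclidean distance exactly $1/b$ from $\xi_b(\pm a)$, since $\mathfrak{p}(\pm a)=0$). This furnishes $\cornersU$-horizontal, hence $\U$-horizontal, bridging curves of sub-Riemannian length at most $\delta/2$ joining $\mathbf{x}_0$ to $\eta(-a)$ and $\eta(a)$ to $\mathbf{x}_1$, whose images lie in $\Gamma\subseteq(\R\setminus[-a,a])\times\R^{d-1}$. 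Along these bridges $x_1\notin[-a,a]$, so $\chi_{[-a,a]}(x_1)=0$ and thus $\convexL\equiv 0$, contributing nothing to the cost.

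The remaining issue is to fit everything into the fixed time window $[-\ad,\ad]$. Since the $\cornersU$-bridges have constant sub-Riemannian speed $\sqrt{2}$, they consume time at most $\delta/(2\sqrt{2})<\delta$; to fill the leftover time I would insert idle intervals using the admissible control $u=(0,0)\in\U$, which keeps $\gamma$ stationary at $\mathbf{x}_0$ or $\mathbf{x}_1$. During these stationary intervals the first coordinate equals $\pm\lambda a\notin[-a,a]$, so again $\convexL=0$. Concatenating (idle interval)$\to$(bridge to $\eta(-a)$)$\to$($\eta$ on $[-a,a]$)$\to$(bridge to $\mathbf{x}_1$)$\to$(idle interval) gives a measurable $\U$-valued control on $[-\ad,\ad]$ whose trajectory stays in $\overline{\domain}$, satisfies $\gamma(-\ad)=\mathbf{x}_0$, $\gamma(\ad)=\mathbf{x}_1\in\destination$, and has total cost $2a$, yielding the claimed bound. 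There is no serious obstacle here beyond the bookkeeping of the time reparametrization and checking that $\eta(\pm a)$ lies in the boundary ball, both of which are immediate from the setting fixed in Section \ref{sec:setting}.
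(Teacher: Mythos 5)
Your construction is correct: the cost of the $\eta$-segment on $[-a,a]$ is exactly $2a$, the bridges and idle pieces have $x_1\notin[-a,a]$ (up to a measure-zero set of times), hence zero running cost, and the time bookkeeping works out, so you do obtain a valid contender for \eqref{min:5} of cost $2a$. The paper's proof is the same idea but with an even simpler contender: since $\initial=\eta(-\lambda a)$ and $\mathbf x_1=\eta(\lambda a)$ lie on $\eta$ itself, it just follows $\eta$ on $[-\lambda a,\lambda a]$ with control $(1,0)$ and sits at $\initial$ resp.\ $\mathbf x_1$ with control $(0,0)$ on the remaining time, so there is no need to invoke the bridging curves of Proposition \ref{prop:gammadelta}(ii), to check that $\eta(\pm a)$ lies in the end discs, or to track the sub-Riemannian length of the bridges; the only facts used are $\eta([-\ad,\ad])\subseteq\overline\domain$ and the vanishing of $\chi_{[-a,a]}$ for $|x_1|>a$. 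Your detour through $\Gamma$ buys nothing here, but it is harmless and the conclusion stands.
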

\begin{proof}
    Observe that the curve 
    \[\psi(t)= \begin{cases}
        \mathbf x_0,&-\ad\leq t\leq -\lambda a,\\
        \eta(t),&-\lambda a\leq t\leq \lambda a,\\
        \mathbf x_1,&\lambda a\leq t\leq \ad,
    \end{cases}\]
    where the curve $\eta$ is as defined in \eqref{eq:def-eta}, together  with the control $u(t)=(1,0)$ for $t\in[-\lambda a, \lambda a]$ and $u(t)=(0,0)$ for $t\in[-a_\delta,a_\delta]\setminus[-\lambda a, \lambda a]$, %
    is a valid contender in \eqref{min:5}, and the corresponding integral equals $2a$. This shows that $\mincurves(\convexL,\U)\leq 2a$.
\end{proof}

\begin{lemma}\label{lem:2}
    $\minarcs(\nonconvexL,U)\leq 2a$.
\end{lemma}
\begin{proof}
    Take the curve $\psi$ from the proof of Lemma \ref{lem:1}, together with the measures
    \[\nu_t=\begin{cases}
        \tfrac14\sum_{p\in\{(\pm1,\pm1)\}}\delta_p,& -\ad\leq t\leq -\lambda a\text{  or  }\lambda a\leq t\leq \ad,\\
        \tfrac12(\delta_{(1,1)}+\delta_{(1,-1)}),&-\lambda a<t<\lambda a.
    \end{cases}
    \]
    Then again $(\psi,\nu_t)$ is a valid contender in \eqref{min:2} and in \eqref{min:4}, and the corresponding integral can be checked to equal $2a$.
\end{proof}
\begin{lemma}\label{lem:8}
    If the integrand $Y\colon \overline\domain\times U\to\R$ satisifies $Y(x,u)\geq u_1$, and if $(\gamma,\nu_t)$ is an $U$-admissible Young measure, such that $\gamma(-\ad)=\mathbf x_0$, $\gamma(\ad)=\mathbf x_1$, then
    \[\int_{-\ad}^{\ad}\int_{U}Y(\gamma,u)\,d\nu_t(u)\,dt\geq 2a.\]
    As a consequence, we have $\mincurves(\convexL,\mathbf U)\geq 2a$ and $\minarcs(\nonconvexL,U)\geq 2a$.
\end{lemma}
\begin{proof}
    Let $X\subseteq[-\ad,\ad]$ be the (closed) set of points $t$ with $\gamma_1\in[-a,a]$. We have 
    \begin{multline*}
        \int_{-\ad}^{\ad}\int_{U}Y(\gamma,u)\,d\nu_t(u)\,dt
        =\int_{X}\int_{U}Y(\gamma,u)\,d\nu_t(u)\,dt
        \geq \int_{X} \int_{U}u_1\,d\nu_t(u)\,dt\\
        =\sum_{[c,d]\subseteq X}\int_c^d\int_{U}u_1\,d\nu_t(u)\,dt
        =\sum_{[c,d]\subseteq X}\int_c^d\gamma'_1(t)\,dt 
        =\sum_{[c,d]\subseteq X}(\gamma_1(d)-\gamma_1(c))=2a.
    \end{multline*}
    Here, the sums are taken over all maximal intervals $[c,d]$ contained in $X$. 
    
    The last assertion of the lemma follows by letting $Y$ be $\convexL$, $\constantL$, and $\nonconvexL$, respectively, and noticing that contenders  in \eqref{min:4}  are a subset of the contenders in \eqref{min:2}, and the contenders in \eqref{min:5} are canonically represented by a subset of the contenders in \eqref{min:2}.
\end{proof}

\begin{lemma}\label{lem:4}
    If $\mincurves(\nonconvexL,U)=\minarcs(\nonconvexL,U)$, then
    $\mathring \mincurves(\convexL,\U)\leq \mincurves(\nonconvexL,U)$.
\end{lemma}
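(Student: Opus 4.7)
The plan is to take a minimizing sequence $(u_n,\gamma_n)$ for \eqref{min:1} and, through small modifications, reshape it into a sequence of contenders for $\mathring\mincurves(\convexL,\U)$ whose costs still converge to $M:=\mincurves(\nonconvexL,U)$. Under the hypothesis and using Lemmas~\ref{lem:2} and~\ref{lem:8}, which give $\minarcs(\nonconvexL,U)=2a$, we have $M=2a$. Writing $X_n=\gamma_{n,1}^{-1}([-a,a])$, a short case split on the sign of $u_1-1$ (using $\Delta(u)\geq ||u_1|-1|$) yields pointwise on $X_n$ the inequalities
\[\nonconvexL(x,u)-u_1\;\geq\;\Delta(u)+r(x)^2\quad\text{and}\quad \nonconvexL(x,u)-u_1\;\geq\;|1-u_1|.\]
Combined with the topological identity $\int_{X_n}u_{n,1}\,dt=2a$ from the proof of Lemma~\ref{lem:8}, near-minimality forces $\Delta(u_n)$, $r(\gamma_n)^2$ and $|1-u_{n,1}|$ all to tend to $0$ in $L^1(X_n)$. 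Extracting a subsequence, $u_{n,1}\to1$ and $|u_{n,2}|\to1$ almost everywhere on $X_n$.

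I would next project pointwise to $\U$, setting $\tilde u_n=\operatorname{proj}_\U u_n$: since $\cornersU\subseteq\U$ the inequality $|\tilde u_n-u_n|\leq\Delta(u_n)$ holds, so $\tilde u_n-u_n\to0$ in $L^1$. Because $f_1,f_2$ are bounded and Lipschitz on $\overline\Omega$, a Gronwall estimate shows the trajectory $\tilde\gamma_n$ of $\tilde u_n$ starting at $\initial$ converges uniformly to $\gamma_n$. The endpoint drift $\tilde\gamma_n(\ad)-\mathbf{x}_1=o(1)$ and any excursion outside $\overline\Omega$ caused by the projection are repaired by prepending and appending short $\cornersU$-horizontal curves through $\Gamma$ supplied by Proposition~\ref{prop:gammadelta}; since $\nonconvexL$ and $\convexL$ both vanish where $|x_1|>a$, these corrections cost nothing. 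Keeping $\tilde\gamma_n$ inside the spiraling tube $\mathbb S$ is the delicate part of this step, and I would bias the modification inward (by an extra nudge of size $o(1)$ toward the central circle $\xi_b$) so as to stay in $\overline\Omega$.

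The main obstacle is to ensure $r(\tilde\gamma_n)\neq 0$ almost everywhere, as required by \eqref{min:6}. Here the crucial structural observation is that a horizontal curve lying on $\{r=0\}$ over a time set of positive measure must satisfy $u_1u_2=0$ almost everywhere there: parametrizing $\{r=0\}$ as $(y_1,y_2)\mapsto\varphi(y_1,y_2,0,\ldots,0)$ and matching the third coordinate of $\gamma'$ against $u_1f_1+u_2f_2$ yields $y_1'y_2=0$, forcing either $u_1=0$ (off $\eta$) or $u_2=0$ (on $\eta$). Since $\tilde u_{n,1}\to1$ and $|\tilde u_{n,2}|\to1$ a.e., the set $E_n:=\{t:r(\tilde\gamma_n(t))=0\}$ has measure $o(1)$; perturbing $\tilde u_n$ on $E_n$ by a fixed element of $\cornersU$ and then applying one more Gronwall step and endpoint correction produces a contender $(\tilde u_n',\tilde\gamma_n')$ for \eqref{min:6} whose cost differs from $\int\nonconvexL(\tilde\gamma_n,\tilde u_n)\,dt$ by $o(1)$. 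Finally, $\tilde u_n'\in\U$ gives $\|\tilde u_n'\|_\infty\leq 1\leq 1+2\Delta(\tilde u_n')$, hence $\convexL\leq\nonconvexL$ on $X$, and
\[\int_{-\ad}^{\ad}\convexL(\tilde\gamma_n',\tilde u_n')\,dt\;\leq\;\int_{-\ad}^{\ad}\nonconvexL(\tilde\gamma_n',\tilde u_n')\,dt\;\longrightarrow\;M,\]
proving $\mathring\mincurves(\convexL,\U)\leq M=\mincurves(\nonconvexL,U)$.
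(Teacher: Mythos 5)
Your opening reductions are sound: the pointwise inequalities for $\nonconvexL(x,u)-u_1$, the identity $\int_{X_n}u_{n,1}\,dt=2a$, and the resulting $L^1$-smallness of $\Delta(u_n)$, $r(\gamma_n)^2$ and $|1-u_{n,1}|$ on the crossing set are all correct, as is the structural observation that riding on $\{r=0\}$ forces $u_1\gamma_2=0$ (inside $\overline{\mathbb S}$ one even has $r=0\Rightarrow x_2=0$, so the curve must sit on $\eta$ with $u_2=0$). The genuine gap is the state constraint. Projecting the control onto $\U$ (and later perturbing it on $E_n$) produces a \emph{different} trajectory $\tilde\gamma_n$, and near-minimizers of \eqref{min:1} are forced by the $r^2$ term to spend most of their time within $o(1)$ of $\{r=0\}\cap\overline{\mathbb S}$, which is exactly the curve $\eta\subset\partial\mathbb S$; inside the tube, $r(\gamma)\le\rho$ puts the point within distance $\rho$ of the boundary. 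Hence an arbitrarily small uniform (Gronwall) drift can and generically will push $\tilde\gamma_n$ out of $\overline\domain$, so the constraint $\tilde\gamma_n(t)\in\overline\domain$ required in \eqref{min:6} is lost. Your proposed repair --- ``bias the modification inward by an $o(1)$ nudge toward the central circle $\xi_b$'' --- is not an admissible operation: a position-dependent translation toward the spiraling core destroys horizontality (admissible velocities span only $\{f_1,f_2\}$, while displacement in the $(x_{d-1},x_d)$-plane is generated only by the highest-order brackets), a constant translation in the last coordinates (which would preserve horizontality, since $f_1,f_2$ depend only on $x_1$) cannot track the spiral, and producing such transverse displacement at negligible cost is precisely what \eqref{eq:hardone} rules out. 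So the fix begs the question at the exact point where the whole difficulty of the paper is concentrated.

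This is also where your route diverges from the paper's. The paper never moves the curve inside the constrained region at all: it reparameterizes time via $\sigma'(t)=\|u(t)\|_\infty$, so the new curve is $\U$-horizontal with $\|\tilde u\|_\infty=1$ and has the \emph{same image} as $\gamma$, whence the state constraint is preserved for free; the cost comparison $\int\convexL(\xi,u_\xi)\le\int\nonconvexL(\gamma,u)$ works because the control-dependence of $\convexL$ is positively $1$-homogeneous, the bound on time spent at $r=0$ is obtained for the original curve via a Markov inequality applied to $\nonconvexL-\convexL$, and the only genuinely new pieces of curve are placed inside $\Gamma$, where Proposition \ref{prop:gammadelta} provides room to maneuver and the Lagrangians vanish. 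If you wish to salvage your scheme, you must replace the projection/nudge step with a modification that does not displace the trajectory where it touches (or nearly touches) $\partial\domain$ --- in effect, the reparameterization trick --- otherwise your construction does not yield contenders for \eqref{min:6}.
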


Recall the well-known \emph{Markov inequality}: for any Borel measure $\mu$ on a set $X\subset\R^d$, and every measurable function $f\colon X\to\R$, 
\begin{equation}\label{eq:markov}
    \mu(\{x\in X:|f(x)|\geq \varepsilon\})\leq \frac1\varepsilon\int_{X}|f(x)|\,d\mu(x).
\end{equation}

\begin{proof}[Proof of Lemma \ref{lem:4}]%
    Assume that $\mincurves(\nonconvexL,U)=\minarcs(\nonconvexL,U)$.
    By Lemma \ref{lem:2}, we have the bound $\minarcs(\nonconvexL,U)\leq 2a$.
    Let $0<\varepsilon<\delta$, and let $A_\varepsilon$ be the set of contenders $(\gamma,u)$ in \eqref{min:1} satisfying 
    \begin{equation}\label{eq:smallaction}
      \int_{-\ad}^{\ad}\nonconvexL(\gamma,u)dt
      \leq \mincurves(\nonconvexL,U)+\varepsilon
      =\minarcs(\nonconvexL,U)+\varepsilon
      \leq 2a+\varepsilon.
    \end{equation}
    Denote by $B_\varepsilon$ the set of contenders in the following problem:
        \begin{customopti}%
            {inf}{\tilde\gamma,\tilde u}
            {\int_{-\ad}^{\ad} \convexL(\tilde\gamma(t),\tilde u(t))\,dt}{\label{min:varepsilon}}{%
            M_\varepsilon=}
            \addConstraint{\tilde\gamma\colon[-\ad,\ad]\to\overline\domain\;\,\text{a $\U$-horizontal curve}}
            \addConstraint{\text{$\tilde u$ is the control of $\tilde\gamma$}}
            \addConstraint{\tilde\gamma(-\ad)=\mathbf x_0,\;\tilde\gamma(\ad)=\mathbf x_1}
            \addConstraint{|\{t\in[-\ad,\ad]:r(\tilde\gamma(t))= 0\}|\leq \sqrt\varepsilon.}
        \end{customopti}
    We will now show that, to each contender $(\gamma,u)\in A_\varepsilon$, corresponds a contender $(\xi,u_\xi)\in B_\varepsilon$ with 
        \begin{equation}\label{eq:wantedineq}
            M_\varepsilon\leq\int_{-\ad}^{\ad} \convexL(\xi, u_{\xi})dt \leq \int_{-\ad}^{\ad} \nonconvexL(\gamma,u)dt.
        \end{equation}
    The statement of the lemma will follow from 
        \[  \mincurves(\nonconvexL,U)
            =\liminf_{\varepsilon\searrow0} \inf_{(\gamma,u)\in A_\varepsilon}\int \nonconvexL(\gamma,u)dt 
            \geq \liminf_{\varepsilon\searrow0} M_\varepsilon 
            =\mathring \mincurves(\convexL,\U),
        \]
    as the contenders of \eqref{min:6} are $\bigcap_\varepsilon B_\varepsilon$.
    
    As an intermediate step, we will now prove that, for $(\gamma,u)\in A_\varepsilon$, the amount of time  spent by $\gamma$ in the set $\overline \domain\cap r^{-1}(0)$ is 
    \begin{equation}\label{eq:Esmall}
        |E|\leq \sqrt\varepsilon,\qquad E=\{t\in[-\ad,\ad]:r(\gamma(t))=0\}.
    \end{equation}
    To show this, observe that the function
    \[\phi(u)\coloneqq 1+2\Delta(u)-\|u\|_\infty,\]
    with $\Delta$ as in \eqref{eq:asterisk},
    is nonnegative $\phi\geq0$; it also satisfies, for $x\in\R^d$ with $x_1\in[-a,a]$,
    \[\phi(u)=\chi_{[-a,a]}(x_1)(1+2\Delta(u)-\|u\|_\infty)=\nonconvexL(x,u)-\convexL(x,u);\]
    and its sublevel sets $\phi^{-1}((-\infty,\alpha]))$ for $0<\alpha\leq 1$ have four connected components, each of them a compact neighborhood of a point of $\cornersU$, of diameter asymptotically vanishing as $\alpha\searrow 0$, and homeomorphic to a closed disc.
    Apply the Markov inequality \eqref{eq:markov}  to see that the Lebesgue measure $|P|$ of the set
    \[P=\{t\in[-\ad,\ad]: -a\leq\gamma_1(t)\leq a, \;\phi(u(t))\geq \sqrt\varepsilon%
    \}\]
    satisfies
    \begin{equation*}
        |P|
        \leq \frac{1}{\sqrt\varepsilon}\int_{-\ad}^{\ad}\chi_{[-a,a]}(\gamma_1(t))\phi(u(t))dt
        =\frac{1}{\sqrt\varepsilon}\int_{-\ad}^{\ad} \nonconvexL(\gamma,u)-\convexL(\gamma,u)\,dt
        \leq \frac{1}{\sqrt\varepsilon}(2a+\varepsilon-2a)=\sqrt\varepsilon,
    \end{equation*}
    by \eqref{eq:smallaction} and since, by  Lemma \ref{lem:8}, %
    \[\int_{-\ad}^{\ad}\convexL(\gamma,u)\,dt \geq 2a.\]
    By continuity of $\gamma$, the set $E$ is a union of closed intervals.
    If $I\subset E$ is an interval of positive length, then $\gamma'(t)$ is a multiple of $e_1$ for $t\in I$, and $u(t)$ is a multiple of $(1,0)$. This means that for $t\in I$, we must have $\phi(u(t))>\sqrt\varepsilon$, i.e., $t\in P$. This in turn means that $E\subseteq P$. As a consequence, we have $|E|\leq|P|\leq \sqrt\varepsilon$, so we have proved \eqref{eq:Esmall}.

    Let $(\gamma,u)\in A_\varepsilon$ with $0<\varepsilon<\delta$. 
    Let $(\tilde\gamma,\tilde u)$ be a pair verifying the following: 
    \begin{itemize}
        \item $\tilde\gamma$ and $\tilde u$ are defined on an interval $I\subset\R$,
        \item the curve $\tilde\gamma$ is $\U$-horizontal with control $\tilde u$,
        \item there is a reparameterization $\sigma\colon[-\ad,\ad]\to I$ such that $\tilde\gamma(\sigma(t))=\gamma(t)$, 
        \item $\tilde u(\sigma(t))=u(t)/\|u(t)\|_\infty$ for almost every $t$ with $u(t)\neq 0$, so that also $\|\tilde u(s)\|_\infty=1$ for almost every $s\in I$, and
        \item $\sigma(u^{-1}(0))$ is a set of measure zero.
    \end{itemize}
    Observe, in particular, that $\sigma$ is an increasing function, and is hence differentiable almost everywhere by Lebesgue's theorem.
    From the above properties we have, for almost every $t\in [-\ad,\ad]$,
    \begin{multline*}
        u_1(t)f_1(\gamma(t))+u_2(t)f_2(\gamma(t))
        =\gamma'(t)%
        =\frac{d}{dt}\tilde\gamma(\sigma(t))%
        =\tilde\gamma'(\sigma(t))\sigma'(t))\\
        =(\tilde u_1\circ\sigma(t)f_1(\tilde\gamma\circ\sigma(t))+\tilde u_2\circ \sigma(t)f_2(\tilde\gamma\circ\sigma(t)))\sigma'(t)%
        =\frac{u_1(t)f_1(\gamma(t))+ u_2(t)f_2(\gamma(t)))}{\|u(t)\|_\infty}\sigma'(t).
    \end{multline*}
    Thus we have, for almost every $t$,
    \[\sigma'(t)=\|u(t)\|_\infty.\]
    Let $Y\subseteq I$ be the set of $t\in I$ with $\tilde\gamma_1(t)\in[-a,a]$. By continuity of $\tilde\gamma$, $Y$ is a closed set, and since $\tilde\gamma$ joins $\mathbf x_0$ with $\mathbf x_1$ and travels within $\overline\domain$, there must be an interval $[t_0,t_1]\subset Y$ with $\tilde\gamma_1(t_0)=-a$, $\tilde\gamma_1(t_1)=a$. We must have $t_1-t_0\leq 2a+\delta$, for otherwise we would have
    \begin{multline*}
        2a+\delta>2a+\varepsilon\geq\int_{\sigma^{-1}([t_0,t_1])}\nonconvexL(\gamma,u)\,dt
        \geq\int_{\sigma^{-1}([t_0,t_1])}\convexL(\gamma,u)\,dt\\
        \geq\int_{\sigma^{-1}([t_0,t_1])}\|u(t)\|_\infty dt
        =\int_{\sigma^{-1}(t_0)}^{\sigma^{-1}(t_1)}%
        \sigma'(t)\,dt
        =\sigma\circ\sigma^{-1}(t_1)-\sigma\circ\sigma^{-1}(t_0)=
        t_1-t_0\geq 2a+\delta.
    \end{multline*}
    Thus we can take the restriction $\tilde\gamma|_{[t_0,t_1]}$ and concatenate it to two curves joining, respectively, $\mathbf x_0$ to $\tilde\gamma(t_0)$ and $\tilde\gamma(t_1)$ to $\mathbf x_1$, each of them of length $\leq\delta/2$ (see Assumption \ref{asm:deltaGamma}); extend the resulting curve to a $\U$-horizontal curve $\xi$ defined in the interval $[-\ad,\ad]$, whose length is $2\ad=2a+2\delta$. Now, $\xi$ and its control $u_\xi$ form a pair with $|\{t\in[-\ad,\ad]:r(\xi(t))= 0\}|\leq \sqrt\varepsilon$ (by our argument above; cf.~\eqref{eq:Esmall}), so that $(\xi,u_\xi)\in B_\varepsilon$ satisfying \eqref{eq:wantedineq}. This concludes the proof of the lemma.
\end{proof}

\subsubsection{The ball-box theorem and some consequences}\label{s:ball}

\paragraph{The uniform ball-box theorem.}
    We state the version of the theorem that we will use, which is a simplified version of what the reader will find in \cite{jean2001uniform}.

    Let $\R^d$ be endowed with a sub-Riemannian structure induced by analytic vector fields $X_1,\dots,X_m$ with commutators $[X_i,X_j]\coloneqq X_iX_j-X_jX_i$.  
    
    For a multiindex $I=(i_1,\dots,i_k)$ with entries $1\leq i_j\leq m$, we set $|I|=i_1+i_2+\dots+i_k$. We also let
    \[[X_I]=[X_{i_1},[X_{i_2},\dots [X_{i_{k-1}},X_{i_k}]]\dots],\]
    be the corresponding commutator of order $k$; when $k=1$, this means that $[X_{(i)}]=X_i$ for $i=1,\dots,m$. We assume that the vector fields $\{[X_I]:|I|\leq n\}$ generate the entire %
    space $\R^d$ %
    at every point
    (i.e., they satisfy the Chow condition). 
    
    For each $1\leq k\leq n$, let $\mathcal L^k(p)\subset \R^d$ %
    be the subspace spanned by $\{[X_I]:|I|\leq k\}$. %
    Assume that all points $p\in M$ are regular in the sense that the vector spaces $\mathcal L^k(p)$ have constant dimension $\dim \mathcal L^k(p)$ globally for all $p$; we will thus use the shorthand $\dim \mathcal L^k$ instead of $\dim \mathcal L^k(p)$. 

    The family  $\{[X_{I_1}],\dots,[X_{I_n}]\}$ of commutators is said to be a \emph{minimal basis} if, at each $p$, the vectors $[X_{I_1}](p),\dots,[X_{I_n}](p)$ generate $\R^d$, and if $|I_j|=s$ whenever $\dim \mathcal L^{s-1}<j\leq \dim \mathcal L^{s}$ (setting $\dim \mathcal L^0=0$).

    With a fixed minimal basis $\{[X_{I_1}],\dots,[X_{I_n}]\}$, let $\operatorname{Box}(p,\rho)$ the set of all points $q$ that can be reached from $p$ by 
    \[q=\phi^{[X_{I_n}]}_{r_n}\cdots\phi^{[X_{I_2}]}_{r_2}\phi^{[X_{I_1}]}_{r_1}(p), \qquad |r_k|\leq \rho^{|I_k|},\]
    where $\phi^X_t(P)$ denotes the flow of the vector field $X$ for time $t$ starting at the point $P$, i.e., $\phi^X_0(P)=P$ and $d\phi^X_t(P)/dt=X(\phi^X_t(P))$.
    
    Denote by $\overline B^{SR}(p,\rho)$ the closed sub-Riemannian ball centered at $p$ of radius $\rho>0$, consisting of all points $q$ that can be joined to $p$ with a horizontal curve of arc length $\leq \rho$. 
    
\begin{theorem}[Uniform ball-box {\cite[Cor.~3]{jean2001uniform}}, simplified]\label{thm:ballbox}
    Fix a minimal basis $\{[X_{I_1}],\dots,[X_{I_n}]\}$.
    Let $K\subset \R^d$ be a compact set. There exist constants $c,C,\delta_0>0$ such that, for every $p\in K$, $0<\varepsilon\leq \delta_0$, we have
    \[\operatorname{Box}(p,c\varepsilon)\subseteq \overline B^{SR}(p,\varepsilon)\subseteq\operatorname{Box}(p,C\varepsilon).\]
\end{theorem}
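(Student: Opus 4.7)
The plan is to prove both inclusions pointwise at each $p$, with constants that depend only on uniform bounds of a finite number of derivatives of $X_1,\dots,X_m$ on a neighborhood of $K$; compactness of $K$ then collapses these into uniform $c,C,\delta_0$. Throughout, it is convenient to view the endpoint map $\Phi_p(r_1,\dots,r_n)=\phi^{[X_{I_n}]}_{r_n}\cdots\phi^{[X_{I_1}]}_{r_1}(p)$ as a map from a weighted box in $\R^n$ to $\R^d$, and to compare its image with a sub-Riemannian ball.

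For the easy inclusion $\operatorname{Box}(p,c\varepsilon)\subseteq \overline B^{SR}(p,\varepsilon)$, I would invoke the classical fact that the flow of a bracket $[X_I]$ of order $k$ for time $t$ can be realized, to leading order, by a concatenation of at most $O_k(1)$ flows of $X_1,\dots,X_m$ whose total horizontal arc-length is $C_k|t|^{1/k}$ (the ``commutator word'' construction obtained by iterating Campbell--Baker--Hausdorff). The residual error is a higher-order flow which can be resolved inductively within the same length budget. Applying this to each basis bracket with $|r_k|\leq (c\varepsilon)^{|I_k|}$ yields a horizontal curve of total length $\sum_k C_{|I_k|}\cdot c\varepsilon$, and choosing $c$ small enough keeps this below $\varepsilon$.

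For the hard inclusion $\overline B^{SR}(p,\varepsilon)\subseteq \operatorname{Box}(p,C\varepsilon)$, my strategy is to work in privileged coordinates around $p$ adapted to the flag $\mathcal L^1(p)\subset \mathcal L^2(p)\subset\cdots$, and to rescale the source by the anisotropic dilation $r_k\mapsto \varepsilon^{|I_k|}r_k$ and the target by the corresponding dilation of the privileged coordinates. In the limit $\varepsilon\searrow 0$ the rescaled endpoint map converges to the endpoint map of the nilpotent approximation, whose Jacobian at the origin is the wedge of the graded-quotient representatives of $[X_{I_1}](p),\dots,[X_{I_n}](p)$; this determinant is nonzero exactly because the basis is minimal and the structure is regular. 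A quantitative inverse function theorem then supplies a constant $C$ such that the rescaled map covers a Euclidean ball of uniform radius in privileged coordinates, and undoing the rescaling recovers the second inclusion, provided the sub-Riemannian ball of radius $\varepsilon$ itself sits inside a privileged coordinate ball of radius $O(\varepsilon)$ in the weighted sense---which is a consequence of the first inclusion.

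The main obstacle, and the content of \cite{jean2001uniform} beyond pointwise statements of Bella\"{\i}che or Nagel--Stein--Wainger type, is the uniformity of the constants $c,C,\delta_0$ over $p\in K$. The regularity hypothesis (constant $\dim\mathcal L^k$) turns the graded bundle into an honest vector bundle, and continuity of the $X_i$ together with minimality of the basis make the weighted Jacobian determinant a continuous nonvanishing function of $p$, hence bounded below on the compact set $K$. Tracking through the quantitative inverse function theorem, the error constants depend only on finitely many $C^k$-norms of $X_1,\dots,X_m$ on a fixed compact neighborhood of $K$ and on this uniform lower bound, and therefore descend to uniform constants $c,C,\delta_0$ valid for all $p\in K$.
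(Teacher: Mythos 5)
First, a point of context: the paper does not prove this statement at all --- Theorem \ref{thm:ballbox} is quoted (in simplified form) from \cite[Cor.~3]{jean2001uniform} --- so your sketch has to be measured against the standard literature argument, namely Bella\"iche-type nilpotent approximation made uniform, which is essentially what Jean does. Your architecture matches that argument: commutator-word realization of bracket flows (with inductive error correction) for $\operatorname{Box}(p,c\varepsilon)\subseteq\overline B^{SR}(p,\varepsilon)$, and privileged coordinates, anisotropic dilations, convergence of the rescaled endpoint map to its nilpotent approximation, and a quantitative inverse function theorem for the reverse inclusion, with equiregularity plus compactness of $K$ giving a uniform lower bound on the weighted Jacobian and hence uniform $c$, $C$, $\delta_0$.

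However, one step as written is wrong. For the hard inclusion you need the input that $\overline B^{SR}(p,\varepsilon)$ is contained in a weighted coordinate box $\{|z_j|\le C'\varepsilon^{w_j}\}$ of privileged coordinates (where $w_j$ are the weights), and you assert that this ``is a consequence of the first inclusion.'' It is not: $\operatorname{Box}(p,c\varepsilon)\subseteq\overline B^{SR}(p,\varepsilon)$ only bounds the ball from below --- it certifies that certain points lie \emph{in} the ball and can never bound the ball from above. The containment of the ball in a weighted coordinate box is an independent estimate: in privileged coordinates each component of $X_1,\dots,X_m$ in the $j$-th slot has weighted order at least $w_j-1$, so integrating the control system along a horizontal curve of length at most $\varepsilon$ and running a Gronwall/bootstrap argument yields $|z_j(\gamma(t))|\le C\varepsilon^{w_j}$; this is the ``distance bounded below by the weighted pseudo-norm'' half of the pointwise ball-box theorem, and its constants must also be made uniform over $K$, which is again where the continuously varying adapted frame and equiregularity enter. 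With that estimate supplied separately, the rest of your outline --- covering the weighted coordinate box by the image of the rescaled bracket-flow endpoint map via a quantitative inverse function theorem whose nondegeneracy constant is controlled by minimality of the basis and compactness of $K$ --- is the standard route and is sound at the level of a sketch, though the inductive length-budget bookkeeping in the first inclusion and the uniformity of the convergence to the nilpotent approximation are exactly the places where the real technical work of \cite{jean2001uniform} lives.
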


Let us now specialize this theorem to our situation. Let $M=\R^d$, let $K$ be a compact set, $n=d$, $m=2$, and
\[I_1=(1),\qquad I_2=(2),\qquad I_k=(1,2,\underbrace{1,1,\dots,1}_{k-3}),\qquad  \text{for $3\leq k\leq d$},\]
so that $[X_{I_k}]=f_k$ for $1\leq k\leq d$.
This gives:

\begin{corollary}\label{coro:ballbox}
    There exist constants $\ballboxsmall,\ballbox,\delta_0>0$ such that, for every $p\in K\supseteq \overline\domain$ and $0<\rho\leq \delta_0$,
    \begin{multline*}
        \varphi(y+[-\ballboxsmall\rho,\ballboxsmall\rho]\times [-\ballboxsmall\rho,\ballboxsmall\rho]\times[-\ballboxsmall\rho^2,\ballboxsmall\rho^2]\times[-\ballboxsmall\rho^3,\ballboxsmall\rho^3]\times\cdots\times [\ballboxsmall\rho^{d-1},\ballboxsmall\rho^{d-1}])\\
        \subseteq
        \overline B^{SR}(p,\rho)
        \subseteq\\
        \varphi(y+[-\ballbox\rho,\ballbox\rho]\times [-\ballbox\rho,\ballbox\rho]\times[-\ballbox\rho^2,\ballbox\rho^2]\times[-\ballbox\rho^3,\ballbox\rho^3]\times\cdots\times [\ballbox\rho^{d-1},\ballbox\rho^{d-1}]),
    \end{multline*} 
    with $\varphi$ as defined in \eqref{eq:defvarphi} and $\varphi(y)=p$. 
\end{corollary}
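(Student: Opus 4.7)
The plan is to apply Theorem~\ref{thm:ballbox} to the sub-Riemannian structure of Section~\ref{sec:setting} with the minimal basis $\{f_1, f_2, f_3, \ldots, f_d\}$, corresponding to the multiindices $I_1 = (1)$, $I_2 = (2)$, and $I_k = (1, 2, \underbrace{1, \ldots, 1}_{k-3})$ for $3 \leq k \leq d$, and then to translate the resulting Box inclusions into the $\varphi$-image of a standard rectangular box. The hypotheses of Theorem~\ref{thm:ballbox} are readily verified: $f_1$ and $f_2$ are polynomial (hence analytic), the Chow condition holds by the construction in Section~\ref{sec:setting}, and the structure is equi-regular with growth vector $(2, 3, \ldots, d)$ globally. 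With this minimal basis, the bracket-depth weights are $1, 1, 2, 3, \ldots, d-1$, matching the exponents appearing in the corollary.

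Applying Theorem~\ref{thm:ballbox} with a compact set $K \supseteq \overline{\domain}$ yields constants $c, C, \delta_0 > 0$ such that $\operatorname{Box}(p, c\rho) \subseteq \overline{B}^{SR}(p, \rho) \subseteq \operatorname{Box}(p, C\rho)$ for every $p \in K$ and every $0 < \rho \leq \delta_0$. To convert these flow-composition Boxes into $\varphi$-images of standard rectangular boxes, I would write $p = \varphi(y)$, compute $q = \phi^{f_d}_{r_d} \circ \cdots \circ \phi^{f_1}_{r_1}(p)$ in closed form using the explicit polynomial expressions for each flow, and then express $q = \varphi(y')$ by solving for $y'$ inductively, one coordinate at a time. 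The flows here have a convenient triangular structure: $\phi^{f_1}_{r_1}$ simply shifts $x_1$; $\phi^{f_2}_{r_2}$ adds $r_2$ to $x_2$ and polynomial-in-$x_1$ corrections to the higher coordinates; and each $\phi^{f_k}_{r_k}$ with $k \geq 3$ leaves $x_1, \ldots, x_{k-1}$ unchanged while perturbing coordinates $k, k+1, \ldots, d$ by explicit polynomials in $x_1$. The formula defining $\varphi$ in \eqref{eq:defvarphi} is tailored precisely so that these polynomial-in-$x_1$ corrections are absorbed into a simple translation of the $y$-coordinates, to leading order.

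The main obstacle is the bookkeeping in the resulting polynomial identities: one must verify that each difference $y'_k - y_k$ is controlled of the correct weighted order $\rho^{w_k}$ (with $w_1 = w_2 = 1$ and $w_k = k-1$ for $k \geq 3$), with all cross-coefficients uniformly bounded on $K$, and that the inverse map recovering $(r_1, \ldots, r_d)$ from a prescribed $y' - y$ enjoys the same scaling. A delicate subtlety is that $\varphi$ is nonlinear, so the $\varphi$-image of a rectangular box is not itself a rectangular box; this forces the two constants $\ballboxsmall < \ballbox$ to accommodate the geometric discrepancies between the two shapes, and it requires $\delta_0$ to be chosen small enough that Taylor remainders in the flow compositions are dominated by the leading-order terms they approximate. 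Once the bijective correspondence $r \leftrightarrow y' - y$ is established with these uniform polynomial bounds in both directions, composing with $\varphi$ turns the Box inclusions from Theorem~\ref{thm:ballbox} into the desired sandwich statement of the corollary.
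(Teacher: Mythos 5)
Your overall strategy is the same as the paper's: apply Theorem~\ref{thm:ballbox} with the minimal basis $[X_{I_k}]=f_k$, then rewrite $\operatorname{Box}(p,\rho)$ as the $\varphi$-image of a weighted coordinate box. The difference lies in how you propose to do the rewriting, and that is where the gap is. For the directions $k\ge 2$ no leading-order/remainder analysis is needed at all: since $D\varphi(q)e_k=f_k(\varphi(q))$ for every $q$ (this follows directly from \eqref{eq:defvarphi} because $f_k$ depends only on the first coordinate, which $\varphi$ preserves), the flows conjugate \emph{exactly}, $\phi^{f_k}_r(\varphi(x))=\varphi(x+re_k)$. This exact intertwining is what the paper's proof invokes; your inductive bookkeeping for these coordinates would only re-derive an identity, not an estimate.

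The genuine problem is the $f_1$-direction, and there your description of the discrepancy as a Taylor remainder ``dominated by the leading-order terms'' for small $\delta_0$ is not correct. Writing $p=\varphi(y)=A_{y_1}y$ and using $A_sA_t=A_{s+t}$ and $A_te_1=e_1$, one computes exactly $\varphi^{-1}\bigl(\phi^{f_1}_{r_1}(p)\bigr)=\varphi^{-1}(p+r_1e_1)=A_{-r_1}y+r_1e_1=y+r_1e_1+(A_{-r_1}-I)y$, and the $k$-th component of the skew term is $\sum_{i=2}^{k-1}\tfrac{(-r_1)^{k-i}}{(k-i)!}\,y_i$, with leading part $-r_1y_{k-1}$. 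This term is first order in $\rho$ and proportional to the transverse coordinates of the base point: for $k=3$ it equals $-r_1y_2$ exactly, which for a general $p\in K$ (a large box, so $y_2$ can be of order one) exceeds $\ballbox\rho^{2}$ for all small $\rho$, no matter how $\ballboxsmall<\ballbox$ and $\delta_0$ are chosen. Hence the uniform weighted bounds $|y'_k-y_k|\lesssim\rho^{w_k}$ that your induction requires fail already on the segment $\{p+r e_1:|r|\le\rho\}\subset\overline B^{SR}(p,\rho)$, and the bijective correspondence $r\leftrightarrow y'-y$ with the claimed scaling cannot be established as proposed; the argument only closes at points whose straightened coordinates $y_2,\dots,y_{d-1}$ are themselves $O(\rho)$. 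The paper's proof handles this direction by simply asserting the exact identity $\phi^{f_1}_r(p)=p+re_1=\varphi(y+re_1)$ and taking $\ballboxsmall=c$, $\ballbox=C$ — an identity which itself holds only when $y_2=\dots=y_{d-1}=0$ (as along $\eta$) — so your instinct that this step requires real care is sound, but the remainder-absorption argument you offer is not a valid substitute, and as written the proposal does not prove the corollary.
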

\begin{proof}
    Immediately from the definition we see that $[X_{I_k}]=f_k$ is a minimal basis. Apply Theorem \ref{thm:ballbox}. Denote the standard basis of $\R^d$ by $e_1,e_2,\dots,e_n$. Observe that the boxes $\operatorname{Box}(p,c\rho)$ and $\operatorname{Box}(p,C\rho)$ are precisely of the form given in the statement of the corollary because $\phi^{f_1}_r(p)=p+re_1=\varphi(\phi^{e_1}_r(y))$ (because of the way $\varphi$ is defined \eqref{eq:defvarphi}), and $\phi_r^{f_k}(\varphi(x))=\varphi(\phi_r^{e_k}(x))=\varphi(x+re_k)$ for $x\in \R^d$, $r\in\R$ and $2\leq k\leq n$, since $D\varphi(q) e_k=f_k(\varphi(q))$ for $q\in\R^d$. We can take $\ballboxsmall=c$ and $\ballbox=C$ to obtain the statement of the corollary.
\end{proof}
\begin{assumpt}\label{asm:ballboxonOmega}
    Observe that we can take $K$ to be a large box centered at the origin, and by taking  $0<a<1<b$ small enough and large enough, respectively, we may assume that $\overline{\mathbb S}\subset K$ and $2/b<\delta_0$.
\end{assumpt}
\paragraph{Ball-box on $\tilde\gamma$.}Throughout the rest of this section we fix a contender $(\gamma,u)$ in \eqref{min:6}. We will denote 
\[\tilde\gamma(t)=\varphi^{-1}(\gamma(t)),\qquad t\in[-\ad,\ad]\]
Then  $\gamma_1(t)=\tilde\gamma_1(t)$, and it follows from the definition \eqref{eq:defr} of $r$ that
\[r(\gamma(t))=\sqrt{\tilde\gamma_3(t)^2+\tilde\gamma_4(t)^2+\dots+\tilde\gamma_d(t)^2}.\]

 We will now get some estimates on certain polygonal approximations to $\tilde\gamma$. 

\begin{lemma}\label{lem:ballboxapplied}
    Let $\delta_0>0$ be as in Corollary \ref{coro:ballbox}.
    There are constants $C>0$ and $0<s_0\leq \delta_0$ such that, if  
    $0\leq s\leq s_0$, then
    \[\left|\tilde\gamma_j(t+s)-\tilde \gamma_j(t)\right|\leq Cs_0^{j-1} %
    ,\qquad j=3,\dots,d.
    \]
        In particular, if $i\in\mathbb N $ is such that $1/i^{5/2}<s_0$,  then
    \begin{equation}\label{eq:estimateballboxnew}
        \left|\tilde\gamma_j(t+s)-\tilde\gamma_j(t)\right|\leq Ci^{\frac52(2-d)}, \qquad0\leq s\leq 1/i^{5/2},\; j=d-1,d.
    \end{equation}
\end{lemma}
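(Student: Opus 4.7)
The plan is to apply Corollary~\ref{coro:ballbox} (the uniform ball--box theorem) essentially directly. The first step is to turn the boundedness of the control $u$ of the contender $(\gamma,u)$ for~\eqref{min:6} into a sub-Riemannian distance estimate: since $u(\tau)\in\U=[-1,1]\times[-1,1]$ a.e., one has $\sqrt{u_1(\tau)^2+u_2(\tau)^2}\leq\sqrt{2}$ a.e., so by Definition~\ref{def:horizontal} the sub-Riemannian arc length of $\gamma|_{[t,t+s]}$ is at most $\sqrt{2}\,s$, which places $\gamma(t+s)$ in $\overline B^{SR}(\gamma(t),\sqrt 2\, s)$.

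The main step is then a direct substitution. I would fix $s_0:=\min(1,\delta_0/\sqrt 2)$, so that for every $0\leq s\leq s_0$ the radius $\rho:=\sqrt 2\, s$ satisfies $\rho\leq\delta_0$, and then apply Corollary~\ref{coro:ballbox} with $p=\gamma(t)$ and $y=\tilde\gamma(t)=\varphi^{-1}(\gamma(t))$. The corollary yields
\[
\tilde\gamma(t+s)-\tilde\gamma(t)\in[-\ballbox\rho,\ballbox\rho]^2\times\prod_{k=3}^{d}[-\ballbox\rho^{k-1},\ballbox\rho^{k-1}],
\]
and reading off coordinate $j\geq 3$ gives $|\tilde\gamma_j(t+s)-\tilde\gamma_j(t)|\leq \ballbox(\sqrt 2\, s)^{j-1}\leq Cs^{j-1}\leq Cs_0^{j-1}$ with $C:=\ballbox\,2^{(d-1)/2}$, using monotonicity of $r\mapsto r^{j-1}$ on $[0,\infty)$ for $j\geq 2$ together with $s\leq s_0$. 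This establishes the first estimate.

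For the ``in particular'' part, given $i\in\mathbb N$ with $1/i^{5/2}<s_0$ and $0\leq s\leq 1/i^{5/2}$, I would use the sharper intermediate bound $Cs^{j-1}$ from the previous step to conclude $|\tilde\gamma_j(t+s)-\tilde\gamma_j(t)|\leq C(1/i^{5/2})^{j-1}=Ci^{-5(j-1)/2}$. For $j=d-1$ this equals $Ci^{5(2-d)/2}$ precisely; for $j=d$, since $i\geq 1$ we have $i^{-5(d-1)/2}\leq i^{-5(d-2)/2}=i^{5(2-d)/2}$, so~\eqref{eq:estimateballboxnew} holds \emph{a fortiori}. I expect no substantial obstacle: the argument is a one-step application of the uniform ball-box theorem together with the elementary $\ell^2$-bound on $u$. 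The only care needed is in tracking the factor $\sqrt 2$ (which is absorbed into $C$) and in the exponent bookkeeping in the last display, where for $j=d$ the bound obtained is actually tighter than what is stated.
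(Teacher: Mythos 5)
Your proposal is correct and follows essentially the same route as the paper: bound the sub-Riemannian displacement over a time interval of length $s$ by $\sqrt2\,s$ using $\|u\|\leq\sqrt2$ on $\U$, then apply Corollary \ref{coro:ballbox} to read off the coordinate estimates after transporting by $\varphi^{-1}$. Your bookkeeping with $\rho=\sqrt2\,s$ (absorbing the factor $2^{(d-1)/2}$ into $C$ and getting the sharper bound $Cs^{j-1}$) is if anything slightly cleaner than the paper's choice $\rho=s_0$, and the exponent comparison for $j=d$ versus $j=d-1$ matches the paper's remark that $i^{\frac52(1-d)}\leq i^{\frac52(2-d)}$.
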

\begin{proof}
    Let $0\leq s\leq s_0\leq \delta_0$ %
    and $3\leq j\leq d$. %
    Note that, since $\gamma$ is $\mathbf U$-horizontal and $\|u\|\leq \sqrt2$ for all $u\in\mathbf U$, we have that  $\gamma(t+s)$ is in the sub-Riemannian ball $\overline B^{SR}(\gamma(t),\sqrt 2s)$.
    Thus if we take $0\leq s\leq s_0\leq \delta_0/\sqrt 2$, $\gamma(t+s)\in \overline B^{SR}(\gamma(t),\delta_0)$, and (also by Assumption \ref{asm:ballboxonOmega}) we may apply Corollary \ref{coro:ballbox} with $\rho=s_0$, $p=\gamma(t)$, and $y=\tilde\gamma(t)$; the corollary gives
    \[\tilde\gamma_j(t+s)\in y_j+ [-\bar C\rho^{j-1},\bar C\rho^{j-1}]=\tilde\gamma_j(t)+[-\bar Cs_0^{j-1},\bar Cs_0^{j-1}].\]
    In other words, 
    \[|\tilde\gamma_j(t+s)-\tilde\gamma_j(t)|\leq \bar Cs_0^{j-1},\]
    so we can take $C=\bar C$.
    The particular case \eqref{eq:estimateballboxnew} follows by taking $s_0=1/i^{5/2}$ and noticing that $i^{\frac52}(1-d)<i^{\frac52}(2-d)$.
\end{proof}

\subsubsection{The cost of turning}
\label{sec:costofturning}

In this section we will establish some estimates necessary for the proof of \eqref{eq:hardone}.

Let $\tau\colon\R^d\to\R^2$ be the projection 
\[\tau(x)=(x_{d-1},x_{d}),\qquad x\in\R^d.\] 

Let $(\gamma,u)$ be a contender in \eqref{min:6}. We will use the same notations as in the previous section. Let also
\[\mathring\gamma(t)=\tau(\varphi^{-1}(\gamma(t))+(0,\xi_b(\gamma_1(t)))),\qquad t\in[-\ad,\ad].\]
Let us briefly discuss the idea behind this definition.
Recall that $\varphi^{-1}$ can be thought of as straightening the sub-Riemannian structure up, and $\varphi^{-1}\circ\gamma$ is the curve corresponding to $\gamma$ in that straightened-up version of things. We need a curve that is not only straightened-up, but also pushed away from $\eta$; this is what the term $(0,\xi_b(\gamma_1(t))$ is intended to do. The curve $\mathring \gamma$ is the projection onto the two-dimensional $(x_{d-1},x_d)$-plane of a slight translation of $\varphi^{-1}\circ\gamma$ away from the origin. In this section we aim to prove estimates stemming from the fact that $\gamma$ turns around $\eta$ roughly as many times as $\domain$ spirals. The curve $\mathring\gamma$ is an auxiliary curve that will be useful at showing this, as it turns around $\eta$ about as many times as $\gamma$, but importantly $\mathring \gamma$ never touches the origin, which allows us to apply the Dog-on-a-Leash Theorem to a curve obtained by closing it up to form a loop.

Consider the vector field
\[\Theta(x_{d-1},x_d)=\frac{(-x_d,x_{d-1})}{x_{d-1}^2+x_d^2},\qquad (x_{d-1},x_d)\neq 0.\]
If we let $\theta$ be the angle, in the $x_{d-1}x_d$-plane, with the $x_{d-1}$ axis, namely,
\[\theta(x_{d-1},x_d)=\arctan \frac{x_d}{x_{d-1}},\qquad x_{d-1}\neq0,\]
then $\Theta=\nabla \theta$, when the latter is well defined.
It is standard (see for example \cite[Ch.~2]{fulton2013algebraic}) that, if the curve $\alpha\colon [0,T]\to\R^2\setminus \{0\}$ is absolutely continuous, there is an integer $k\in\mathbb Z$ such that
\[ \int_{0}^T\langle\Theta\circ\alpha(t),\alpha'(t)\rangle\,dt=\theta(\alpha(T))-\theta(\alpha(0))+2\pi k\]
is the angle traversed by $\alpha$ as it goes around the origin $0\in\R^2$. If $\alpha(0)=\alpha(T)$, $k$ is the number of (oriented) turns $\alpha$ completes around the origin.

\begin{lemma}\label{lem:mustturn}
    Every interval $I=[i_0,i_1]\subseteq[-\ad,\ad]$ such that $\gamma(i_0)=-a\leq \gamma_1(t)\leq a=\gamma(i_1)$ for all $t\in I$ verifies %
    \[
        \int_I \langle\Theta\circ\mathring\gamma(t),\mathring\gamma'(t)\rangle dt 
        \geq ab-2\pi.
    \]
\end{lemma}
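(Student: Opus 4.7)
The plan is to apply the Dog-on-a-Leash Theorem \cite[Th.~3.11]{fulton2013algebraic} by comparing $\mathring{\gamma}|_I$ to an explicit reference closed loop $\beta$ built from the helical axis of $\Omega$, whose winding around the origin is directly computable.

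Set $\beta(t) = 2\tau((0,\xi_b(\gamma_1(t))))$ for $t \in I$. Since $\gamma_1(i_0) = -a$, $\gamma_1(i_1) = a$, and $\mathfrak{p}(\pm a) = 0$, both endpoints of $\beta|_I$ equal $(2/b)(\cos(ab),\sin(ab))$, so $\beta|_I$ is a closed loop. The tube definition of $\mathbb{S}$, combined with the fact that $\gamma_1(t) \in [-a,a]$ forces $\gamma(t) \in \overline{\mathbb{S}}$, yields $\|\varphi^{-1}(\gamma(t))_{2:d} - \xi_b(\gamma_1(t))\| \leq 1/b$ in $\R^{d-1}$. Since $\xi_b$ is supported on the last two slots, this bound descends to the projection: $|\mathring{\gamma}(t) - \beta(t)| \leq 1/b$ for every $t \in I$.

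Next, I would compute the angular content of $\beta$. Writing $\beta = \tilde\beta \circ \gamma_1$ with $\tilde\beta(s) = 2\tau((0,\xi_b(s)))$, the chain rule and fundamental theorem of calculus give $\int_I \langle \Theta\circ\beta, \beta'\rangle\,dt = F(a) - F(-a)$, where $F'(s) = \langle \Theta\circ\tilde\beta(s), \tilde\beta'(s)\rangle$ is the angular velocity of $\tilde\beta$ at $s$. On the central region $s \in [-a + 1/b^2, a - 1/b^2]$ (where $\mathfrak{p}(s) = 1$), $\tilde\beta(s) = (2/b)(\cos(bs),\sin(bs))$ and this velocity equals $b$, contributing $b(2a - 2/b^2) = 2ab - 2/b$; the short transitional and terminal pieces contribute a bounded amount, in absolute value at most a fixed constant independent of $b$. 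Choosing $b$ large enough, the angular content of $\beta|_I$ is therefore at least $ab - 2\pi$.

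To transfer this to $\mathring{\gamma}$, I would close $\mathring{\gamma}|_I$ into a loop $L$ by adjoining the straight segment $\sigma$ from $\mathring{\gamma}(i_1)$ to $\mathring{\gamma}(i_0)$. Both endpoints lie in $\overline{B}(\beta(i_0),1/b) \subset \R^2 \setminus \{0\}$, so $\sigma$ stays inside this disk away from the origin, and its angular contribution is bounded (by $\pi/3$, the angle the disk subtends at the origin). On the central region, where $|\beta| = 2/b$ strictly exceeds the leash length $1/b$, Dog-on-a-Leash applies directly to identify the winding number of $L$ with that of $\beta|_I$. Combining this with the computation above and subtracting the bounded angular contribution of $\sigma$ yields $\int_I \langle \Theta\circ\mathring{\gamma}, \mathring{\gamma}'\rangle\,dt \geq ab - 2\pi$.

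\textbf{Main obstacle.} The chief technical difficulty is carrying the Dog-on-a-Leash comparison through the transitional subintervals where $\mathfrak{p}(\gamma_1) \in (0,1)$ and $|\beta|$ may dip below the leash length $1/b$ (in pathological cases reaching zero when $ab$ is close to an odd multiple of $\pi$). These subintervals must be either bridged by a slight perturbation of the reference curve that stays bounded away from the origin, or handled directly using the fact that the $\gamma_1$-range there has width only $O(1/b^2)$, so that the missing angular variation of $\mathring{\gamma}$ is controlled. This last bookkeeping is what ultimately forces the $-2\pi$ loss in the final bound.
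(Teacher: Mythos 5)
Your overall strategy is the same as the paper's: compare $\mathring\gamma|_I$ with the doubled spiral $t\mapsto 2\tau(0,\xi_b(\gamma_1(t)))$ via the Dog-on-a-Leash Theorem, obtain the leash bound $1/b$ from the tube inclusion $\gamma(t)\in\overline{\mathbb S}$, compute the reference winding $\approx 2ab$ by the reparametrization/telescoping argument, and lose a bounded angle at the ends (the paper closes \emph{both} curves with arcs of angular contribution at most $\pi$ each, while you close only $\mathring\gamma$ with a segment after noting the reference is already a loop --- a cosmetic difference).

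However, the obstacle you flag at the end is left open, and neither of your two proposed patches closes it. If $2ab \bmod 2\pi$ is near $\pi$, then on the left transition band $\gamma_1\in[-a+\tfrac{1}{b^3},-a+\tfrac{1}{b^2}]$ the point $\tau(0,\xi_b(s))$ interpolates between two nearly antipodal points of the circle of radius $1/b$, so $2|\tau(0,\xi_b(s))|$ can dip below the leash length $1/b$ and even vanish. Then (i) your identity $\int_I\langle\Theta\circ\beta,\beta'\rangle\,dt=F(a)-F(-a)$ is not even defined, since the reference may pass through the origin; (ii) the $O(1/b^2)$ width of the $\gamma_1$-range does \emph{not} control the missing angular variation of $\mathring\gamma$: the curve may dwell in that band for a macroscopic amount of time, and when $2|\tau(0,\xi_b(s))|<1/b$ the $(x_{d-1},x_d)$-slice of the constraint tube contains the winding center, so nothing in your argument prevents $\mathring\gamma$ from shedding an uncontrolled number of turns there; (iii) perturbing the reference curve changes only the comparison object, not the admissible curve, so it cannot rule out this unwinding either. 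The correct resolution uses the freedom in the construction: $a$ and $b$ are at the authors' disposal, and choosing $ab$ (essentially) a multiple of $2\pi$ --- as in the paper's illustration $a=1/10$, $b=40\pi$ --- makes the two interpolated directions in each transition band differ by an angle $O(1/b)$, whence $2|\tau(0,\xi_b(s))|\geq \tfrac{2}{b}\cos\bigl(\tfrac{1}{2b}\bigr)>1/b$ for every $s\in[-a,a]$, and the leash condition holds on all of $I$; this is precisely the inequality $\|\nu_1-\nu_2\|\leq 1/b\leq\|\nu_2\|$ that the paper's proof asserts (with the parameter choice implicit). Without such a restriction on $(a,b)$ the leash comparison genuinely breaks on that band and the claimed lower bound is not justified, so this is a real gap rather than routine bookkeeping.
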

\begin{proof}
    Recall that $ab>2\pi$.    
    Let $I=[i_0,i_1]\subseteq[-\ad,\ad]$ be an interval as in the statement of the lemma. %
    Let $\alpha,\beta\colon[0,1]\to B^2(0,2/b)\setminus\{0\}\subset\R^2$ be smooth curves with $\alpha(0)=\tau\circ\mathring\gamma(i_1)$, $\alpha(1)=\tau\circ\mathring\gamma(i_0)$, $\beta(0)=2\tau(0,\xi_b(a))$, and $\beta(1)=2\tau(0,\xi_b(-a))$, chosen so that 
    \begin{equation}\label{eq:dogleash}
        \left|\int_0^1 \langle\Theta\circ\alpha,\alpha'\rangle dt\right|\leq\pi,\qquad \left|\int_0^1\langle\Theta\circ\beta,\beta'\rangle dt\right|\leq\pi,
    \end{equation}
    and
    \[\|\alpha(t)-\beta(t)\|\leq 1/b=\|\beta(t)\|.\]
    Let $\nu_1\colon[i_0,i_1+1]\to B^2(0,3/b)\setminus\{0\}$ (respectively, $\nu_2\colon[i_0,i_1+1]\to B^2(0,3/b)\setminus\{0\}$) be the concatenation of $\tau\circ\tilde\gamma|_I$ and $\alpha$ (of $2\tau(0,\xi_b\circ\gamma_1)|_I$ and $\beta$). The curves $\nu_1$ and $\nu_2$ are closed, and satisfy $\|\nu_1-\nu_2\|\leq 1/b\leq \|\nu_2\|$, so by the Dog-on-a-Leash Theorem \cite[Th.~3.11]{fulton2013algebraic} we have
    \[\int_{i_0}^{i_1}\langle\Theta\circ\nu_1,\nu_1'\rangle dt=\int_{i_0}^{i_1}\langle\Theta\circ\nu_2,\nu_2'\rangle dt.\]
    The statement of the lemma follows from this together with properties \eqref{eq:dogleash}.
\end{proof}

For $j=1,2,\dots$, let $V_j\subseteq[-\ad,\ad]$ be defined by
\[V_j=\{t\in [-\ad,\ad]:\gamma_1(t)\in[-a,a],\;r(\gamma(t))\in(\tfrac1{j+1},\tfrac1j]\}.\]
The sets $V_j$ are measurable, and $R_\gamma\setminus\bigcup_jV_j$ is a set of measure zero; cf.~\eqref{min:6}.

\begin{lemma}\label{lem:partitionestimatesnew}
Let $I$ be an interval as in Lemma \ref{lem:mustturn}. 
    For each $j\in\mathbb N$, take numbers $ t_{j,0}<t_{j,1}<\dots<t_{j,N_j}\in I\cap (V_{j-1}\cup V_j\cup V_{j+1})$, such that, if we let
    \[\ell_j\coloneqq\frac1{j^{5/2}},\]
    then
    the intervals $[t_{j,k},t_{j,k}+\ell_j)$ are disjoint and cover $I\cap V_j$. 
    Then:
    \begin{enumerate}[label=\roman*.,ref=(\roman*)]
            \item \label{it:arclengthnew} For $j=1,2,\dots$ such that $j^{-5/2}<s_0$, with $s_0$ as in Lemma \ref{lem:ballboxapplied}, %
            the  polygonal approximation to $\tau\tilde\gamma$ obtained by linearly interpolating the points $(\gamma_{d-1}(t_{j,k}),\gamma_d(t_{j,k}))$ and $(\gamma_{d-1}(t_{j,k}+\ell_j),\gamma_d(t_{j,k}+\ell_j))$ in $V_j$ has arc length 
            \begin{equation*}%
                \sum_{k=0}^{N_j}\sqrt{(\tilde\gamma_{d-1}(t_{j,k}+\ell_j)-\tilde\gamma_{d-1}(t_{j,k}))^2+(\tilde\gamma_d(t_{j,k}+\ell_j)-\tilde\gamma_d(t_{j,k}))^2} %
                \leq C%
                \frac{|V_{j-1}|+|V_j|+|V_{j+1}|}{j^{\frac52(d-3)}}.
            \end{equation*}
        \item \label{it:angleboundnew}
            We can estimate the winding of $\mathring\gamma|_I$ around the origin by
            \begin{equation*}
              \left|\int_{I}\langle \Theta\circ\mathring\gamma,\mathring\gamma'\rangle dt\right| %
              \leq\sum_{j=1}^\infty 2j
            \sum_{k=0}^{N_j}\sqrt{(\tilde\gamma_{d-1}(t_{j,k}+\ell_j)-\tilde\gamma_{d-1}(t_{j,k}))^2+(\tilde\gamma_d(t_{j,k}+\ell_j)-\tilde\gamma_d(t_{j,k}))^2}. %
            \end{equation*}
    \end{enumerate}
\end{lemma}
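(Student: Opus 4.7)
My plan is to handle each part in turn. Part~\ref{it:arclengthnew} is a direct combination of the ball-box estimate with a segment count, while part~\ref{it:angleboundnew} needs a pointwise lower bound on $\|\mathring\gamma\|$ on each $V_j$ and a chord-by-chord comparison converting local angle changes into polygonal displacements of $\tau\tilde\gamma$.

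For part~\ref{it:arclengthnew}, I would apply \eqref{eq:estimateballboxnew} to bound each $|\tilde\gamma_{d-1}(t_{j,k}+\ell_j)-\tilde\gamma_{d-1}(t_{j,k})|$ and $|\tilde\gamma_d(t_{j,k}+\ell_j)-\tilde\gamma_d(t_{j,k})|$ by $Cj^{\tfrac52(2-d)}$, so each polygonal segment has Euclidean length at most $\sqrt{2}\,C\,j^{\tfrac52(2-d)}$. Since the disjoint intervals $[t_{j,k},t_{j,k}+\ell_j)$ start in $V_{j-1}\cup V_j\cup V_{j+1}$ and cover $I\cap V_j$, the segment count obeys $N_j+1\leq j^{5/2}(|V_{j-1}|+|V_j|+|V_{j+1}|)$. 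Multiplying segment length by segment count and using $\tfrac52+\tfrac52(2-d)=-\tfrac52(d-3)$ yields the stated bound.

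For part~\ref{it:angleboundnew}, I would decompose $\int_I=\sum_j\int_{I\cap V_j}$ (up to the measure-zero set $\{r\circ\gamma=0\}$) and further subdivide each term along $\{[t_{j,k},t_{j,k}+\ell_j)\}$, applying $|\int|\leq\int|\cdot|$ at both levels. The key pointwise input is $\|\mathring\gamma(t)\|\geq 1/(2j)$ on $V_j$, which I would derive from the parallelogram identity
\[\|\mathring\gamma\|^2 = 2\|\tau\tilde\gamma\|^2 + 2\|\tau(0,\xi_b\circ\gamma_1)\|^2 - \|\tau\tilde\gamma-\tau(0,\xi_b\circ\gamma_1)\|^2,\]
combined with the admissibility inclusion $(\tilde\gamma_2,\ldots,\tilde\gamma_d)\in B^{d-1}(\xi_b(\gamma_1),1/b)$ from the definition of $\mathbb{S}$: the subtracted term is at most $1/b^2$, while on $V_j$ the positive residual is of order $\max(r(\gamma),1/b)^2\gtrsim 1/j^2$ for the parameters of Section~\ref{sec:setting}. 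This gives $|\Theta\circ\mathring\gamma|\leq 2j$ on $V_j$.

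Finally, on each short interval $[t_{j,k},t_{j,k}+\ell_j]$ I would compare the winding of $\mathring\gamma$ to the angular displacement along the chord from $\mathring\gamma(t_{j,k})$ to $\mathring\gamma(t_{j,k}+\ell_j)$ via a Dog-on-a-Leash argument analogous to that used in Lemma~\ref{lem:mustturn}; the leash condition is supplied by the lower bound $\|\mathring\gamma\|\geq 1/(2j)$ together with the fact that the shift $\tau(0,\xi_b\circ\gamma_1(\cdot))$ varies by at most $\ell_j=j^{-5/2}$ across the interval. The chord angle is then at most $2j$ times the chord length, and the chord length equals $\|\tau\tilde\gamma(t_{j,k}+\ell_j)-\tau\tilde\gamma(t_{j,k})\|$ up to a shift correction of size $\leq\ell_j$. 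Summation over $k$ and $j$ produces the claim. The main obstacle will be ensuring that these shift corrections do not inflate the bound: for $d\geq 4$ the polygonal term from part~\ref{it:arclengthnew} dominates the $\ell_j$ correction, so a careful---e.g.~greedy---choice of the $t_{j,k}$'s so that consecutive intervals share endpoints whenever possible should allow the shift contribution to be absorbed into the stated $2j$ factor, possibly at the cost of an adjusted constant.
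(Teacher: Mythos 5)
Your part~\ref{it:arclengthnew} is fine and is essentially the paper's argument: each sampled increment is bounded by \eqref{eq:estimateballboxnew} and the number of sampling intervals by $(|V_{j-1}|+|V_j|+|V_{j+1}|)/\ell_j$, and the exponents combine as you say.

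Part~\ref{it:angleboundnew}, however, has a genuine gap, and it is exactly at the point you flag as ``the main obstacle.'' Your plan is to estimate the winding of $\mathring\gamma$ chord by chord, writing each chord of $\mathring\gamma$ as the corresponding increment of $\tau\tilde\gamma$ plus a ``shift correction'' coming from $\tau(0,\xi_b\circ\gamma_1)$, and to absorb the corrections because ``for $d\geq4$ the polygonal term from part~\ref{it:arclengthnew} dominates the $\ell_j$ correction.'' The comparison goes the other way: per sampling interval the shift can move by as much as $\mathrm{Lip}(\xi_b\circ\gamma_1)\,\ell_j\sim j^{-5/2}$, whereas the ball-box increment of $\tau\tilde\gamma$ is $O(j^{\frac52(2-d)})=O(j^{-5})$ when $d\geq4$, so the correction dominates the polygonal term on every chord. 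Worse, these corrections do not sum to something absorbable: weighting by $2j$ and multiplying by the $\sim j^{5/2}(|V_{j-1}|+|V_j|+|V_{j+1}|)$ intervals gives a total contribution of order $\sum_j j\,(|V_{j-1}|+|V_j|+|V_{j+1}|)\gtrsim b\cdot a$, since $r\leq 2/b$ on $\overline{\mathbb S}$ forces $V_j=\emptyset$ unless $j\gtrsim b/2$ while the $V_j$ cover almost all of $I$ with $|I|\geq 2a$. That is the same order as the lower bound $ab-2\pi$ of Lemma \ref{lem:mustturn} that the final argument must contradict, so no adjusted constant or greedy choice of the $t_{j,k}$ can rescue a termwise triangle inequality; the shift's angular advance is a genuine contribution to the winding of $\mathring\gamma$ on every interval where $\gamma_1$ advances, and it must be cancelled exactly, not estimated. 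The same objection applies to your opening reduction ``apply $|\int|\leq\int|\cdot|$ at both levels'' with $|\Theta\circ\mathring\gamma|\leq 2j$: this bounds the winding by $2j$ times the arc length of $\mathring\gamma$ over $V_j$, and that arc length contains the order-one speed of the shift, again producing $\sum_j j|V_j|\sim ab$ rather than the stated right-hand side.

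What is missing is the mechanism the paper uses to make the shift disappear exactly. The paper never estimates $\mathring\gamma$ chordwise; it builds the polygon $\zeta$ through the samples of the \emph{unshifted} curve $\tau\tilde\gamma$, bounds the winding of $\zeta$ chord by chord (this is where the $2j$ and the radius $\geq 1/(j+1)$ enter), and then transfers the result to $\mathring\gamma$ by two winding-number identities: the homotopy $\zeta_\epsilon=\zeta+\epsilon\,\tau(0,\xi_b\circ\gamma_1)$, under which the angle integral is unchanged, and a Dog-on-a-Leash comparison between the closed-up curves $\mathring\gamma$ and $\zeta_1$, using $\|\mathring\gamma-\zeta_1\|=\|\tau\tilde\gamma-\zeta\|\leq\|\mathring\gamma\|$, which yields \emph{equality} of the two winding integrals over $I$. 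It is this topological rigidity, not a per-chord bound, that eliminates the rotating shift. A smaller remark: your derivation of $\|\mathring\gamma\|\geq 1/(2j)$ on $V_j$ via ``residual $\gtrsim\max(r(\gamma),1/b)^2$'' is shaky for $d\geq5$, since $r$ also contains the coordinates $3,\dots,d-2$ and the parallelogram residual need not be $\gtrsim r^2$; the usable bound is $\|\mathring\gamma\|\gtrsim 1/b$ from the $\overline{\mathbb S}$ constraint combined with the fact that $V_j\neq\emptyset$ forces $j\gtrsim b/2$, which costs a constant. But this is secondary to the structural issue above.
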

\begin{proof}
    To obtain estimate \ref{it:arclengthnew}, apply \eqref{eq:estimateballboxnew} on the intervals $[t_k,t_{k+1}]$ covering $V_i$; an upper bound of the number of intervals necessary is, by construction of the sets $V_i$, $(|V_{i-1}|+|V_i|+|V_{i+1}|)/\ell_i$. 

    Let us now prove estimate \ref{it:angleboundnew}. Let $\zeta\colon I\to\R^2$ be the polygonal approximation  obtained by linearly interpolating the points $\tau\circ\tilde\gamma(t_{j,k})=(\tilde\gamma_{d-1}(t_{j,k}),\tilde\gamma_d(t_{j,k}))$ and $\tau\circ\tilde\gamma(t_{j,k}+\ell_j)$. From the definition of $V_j$, we know that the distance from $\|\zeta(t_{j,k}+s)\|\in[\tfrac{1}{i+1},\tfrac{1}{i-1}]$ for $0\leq s\leq \ell_j$. It follows from \eqref{eq:estimateballboxnew} that the segment joining the points $\zeta(t_{j,k})=\tau\circ\tilde\gamma(t_{j,k})$ and $\zeta(t_{j,k}+\ell_j)=\tau\circ \tilde\gamma(t_{j,k}+\ell_j)$ does not intersect the origin 0, so we can estimate the angle traversed by $\zeta$ by
    \begin{multline}\label{eq:stepone}
        \left|\int_{t_k}^{t_{k+1}}\langle\Theta\circ\zeta,\zeta'\rangle dt\right|
        \leq \frac{\text{distance travelled}}{\text{radius}}\leq \frac{\|\zeta(t_{j,k}+\ell_j)-\zeta(t_{j,k})\|}{\tfrac{1}{j+1}}\\
        \leq 2j\|\zeta(t_{j,k}+\ell_j)-\zeta(t_{j,k})\|
        =2j\|\tau\circ\tilde\gamma(t_{j,k}+\ell_j)-\tau\circ\tilde\gamma(t_{j,k})\|,
    \end{multline}
    as per the definition of radians. 
    Adding over all the relevant $k$ and $j$ gives 
     \begin{equation*}
              \left|\int_{\bigcup_jV_j
              }\langle \Theta\circ\zeta,\zeta'\rangle dt\right|%
              \leq\sum_{j=1}^\infty 2j
           \sum_{k=0}^{N_j}\sqrt{(\tilde\gamma_{d-1}(t_{j,k}+\ell_j)-\tilde\gamma_{d-1}(t_{j,k}))^2+(\tilde\gamma_d(t_{j,k}+\ell_j)-\tilde\gamma_d(t_{j,k}))^2}
            \end{equation*}

    For the next step, consider the functions
    \[\zeta_\epsilon(t)=\zeta(t)+\epsilon\tau(0,\xi_b(t)),\qquad \epsilon\in(0,1],\;t\in I.\]
    Observe that $\zeta_0=\zeta$ and $\zeta_1(t_{j,k})=\mathring\gamma(t_{j,k})$. Also, since $I\setminus\bigcup_jV_j$ is a set of measure zero,
    \[\int_{\bigcup_jV_j}\langle\Theta\circ\zeta_\epsilon,\zeta_\epsilon'\rangle dt=\int_{I}\langle\Theta\circ\zeta_\epsilon,\zeta_\epsilon'\rangle dt,\]
    independently of $\epsilon\in(0,1]$, and
    \[
        \lim_{\epsilon\searrow0}\int_{\bigcup_jV_j}
        \langle\Theta\circ\zeta_\epsilon,\zeta_\epsilon'\rangle dt
        =\int_{\bigcup_jV_j}
        \langle\Theta\circ\zeta,\zeta'\rangle dt.
    \]
    This gives
    \begin{equation*}
        \int_{I}\langle\Theta\circ\zeta_1,\zeta_1'\rangle dt %
        \leq \sum_{j=1}^\infty 2j
        \sum_{k=0}^{N_j}\sqrt{(\tilde\gamma_{d-1}(t_{j,k}+\ell_j)-\tilde\gamma_{d-1}(t_{j,k}))^2+(\tilde\gamma_d(t_{j,k}+\ell_j)-\tilde\gamma_d(t_{j,k}))^2}.
    \end{equation*}

    To finish the proof, write $I=[i_0,i_1]$ and join the points $\mathring\gamma(i_0)=\zeta_1(i_0)$ and $\mathring\gamma(i_1)=\zeta_1(i_1)$ with a curve $\alpha\colon[0,1]\to B^2(0,3/b)\setminus\{0\}$. 
     From the fact that $ \gamma$ is Lipschitz (with constant $\leq \sup_{x\in\domain}\|f_1(x)\|+\|f_2(x)\|\leq 3$), together with the estimate \eqref{eq:estimateballboxnew}, it follows that $\|\mathring\gamma(t)-\zeta_1(t)\|\leq \|\mathring\gamma(t)\|$.
    Thus we may apply the Dog-on-a-Leash Theorem \cite[Th.~3.11]{fulton2013algebraic} to the two curves $\nu_1$ and $\nu_2$ obtained by concatenating $\alpha$ with, respectively, $\mathring\gamma$ and $\zeta_1$.
   Since the part of the integral corresponding to $\alpha$ is the same for $\nu_1$ as for $\nu_2$,
    this implies that, in \eqref{eq:stepone} we have
    \[
        \int_{I}\langle\Theta\circ\mathring\gamma,\mathring\gamma'\rangle dt
        =\int_{I}\langle\Theta\circ\zeta_1,\zeta_1'\rangle dt,
    \]
    which shows item \ref{it:angleboundnew}. 
\end{proof}

\subsubsection{A priori bounds}
\label{sec:aprioribounds}
We will denote by $|X|$ the Lebesgue measure of a measurable set $X\subset\R^d$.

\begin{lemma}
    \label{lem:aprioribounds}
    Let $0<\epsilon<1$.
    Let $(\gamma,u)$ be a contender in \eqref{min:5}, which is to say that $\gamma\colon[-\ad,\ad]\to\overline\domain$ is a $\U$-horizontal curve with control $u\colon[-\ad,\ad]\to\U$, joining $\mathbf x_0=\gamma(-\ad)$ and $\mathbf x_1=\gamma(\ad)$. Assume that 
    \[\int_{-\ad}^{\ad}\convexL(\gamma(t),u(t))dt\leq 2a+\epsilon.\]
    Let $I=[i_0,i_1]$ be an interval with $\gamma_1(i_0)=-a\leq \gamma_1(t)\leq a=\gamma_1(i_1)$ for all $t\in I$.
    Then we have
    \[\sup_{t\in I}r(\gamma(t))\leq 5\,\epsilon^{1/4}\]
\end{lemma}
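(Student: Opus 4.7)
The plan is to extract the $L^2$ bound $\int_I r(\gamma(t))^2\,dt\le\epsilon$ from the cost hypothesis and then upgrade it to an $L^\infty$ bound by showing that $t\mapsto r(\gamma(t))^2$ is Lipschitz with an explicit constant deriving from the nilpotent Goursat structure. The main obstacle is expected to be the precise ODE identity relating $\tilde\gamma_j'$ to $\tilde\gamma_{j-1}$: this is where the structure of the control system is used essentially, and without it one could only recover a Lipschitz constant for $r(\gamma)^2$ of order $1$ (instead of $1/b^2$).

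For the $L^2$ bound: since $f_1=e_1$ and the first coordinate of $f_2$ vanishes, $\gamma_1'=u_1$ almost everywhere, so combined with $\gamma_1(i_0)=-a$ and $\gamma_1(i_1)=a$ we have $\int_I u_1\,dt=2a$ and hence $\int_I\|u\|_\infty\,dt\ge 2a$. Because $\gamma_1(t)\in[-a,a]$ throughout $I$, one has $\convexL(\gamma,u)=\|u\|_\infty+r(\gamma)^2$ there; using nonnegativity of $\convexL$ and the cost hypothesis,
\[
\int_I\bigl(\|u\|_\infty+r(\gamma)^2\bigr)\,dt\le\int_{-\ad}^{\ad}\convexL(\gamma,u)\,dt\le 2a+\epsilon,
\]
so $\int_I r(\gamma)^2\,dt\le\epsilon$.

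The next step is the identity $\tilde\gamma_j'=-u_1\tilde\gamma_{j-1}$ for $j=3,\dots,d$, where $\tilde\gamma=\varphi^{-1}\circ\gamma$. Differentiating $\tilde\gamma_j=\sum_{k=2}^j\tfrac{(-\gamma_1)^{j-k}}{(j-k)!}\gamma_k$ and substituting $\gamma_1'=u_1$, $\gamma_2'=u_2$, and $\gamma_k'=u_2\gamma_1^{k-2}/(k-2)!$ for $k\ge 3$, the $u_2$-contributions collapse via the binomial identity
\[
\sum_{m=0}^{j-2}\binom{j-2}{m}(-\gamma_1)^{j-2-m}\gamma_1^m=(\gamma_1-\gamma_1)^{j-2}=0,
\]
while the $u_1$-contribution reassembles into $-u_1\tilde\gamma_{j-1}$. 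Consequently
\[
\tfrac{d}{dt}r(\gamma)^2=-2u_1\sum_{j=3}^d\tilde\gamma_j\tilde\gamma_{j-1},
\]
which Cauchy--Schwarz bounds in absolute value by $2|u_1|\,r(\gamma)\sqrt{\tilde\gamma_2^2+r(\gamma)^2}$. On $I$ the curve is confined to $\overline{\mathbb S}$, whose defining ball inequality yields $|\tilde\gamma_2|\le 1/b$ and $r(\gamma)\le 2/b$, and $|u_1|\le 1$ since $u\in\U$; one obtains a uniform Lipschitz constant $K=4\sqrt5/b^2$ for $r(\gamma)^2$ on $I$.

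To conclude, let $R=\sup_{t\in I}r(\gamma(t))$, attained at some $t^*\in I$. By $K$-Lipschitzness, $r(\gamma(t))^2\ge\max(R^2-K|t-t^*|,0)$ on $I$; integrating this tent function over $I$ gives at least $R^4/(2K)$ in the worst one-sided case when $t^*$ sits at an endpoint (the base $R^2/K$ is at most $1/\sqrt5$ and therefore fits inside $I$ since $|I|\ge 2a$ in the relevant small-$R$ regime, and the remaining range is covered by the same estimate applied to the finer pointwise bound $|\frac{d}{dt}r(\gamma)^2|\le 2r(\gamma)(1/b+r(\gamma))$, yielding $R\le\max((8\epsilon/b)^{1/3},\sqrt{8\epsilon})$). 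Combining with $\int_I r(\gamma)^2\,dt\le\epsilon$ gives $R^4\le 2K\epsilon$, so $R\le(2K\epsilon)^{1/4}=(8\sqrt5)^{1/4}b^{-1/2}\epsilon^{1/4}$; the explicit prefactor is comfortably below $5$ for the parameter values of the construction, yielding the claimed bound.
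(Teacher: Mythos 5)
Your first two steps are correct and constitute a genuinely different (and in principle sharper) route than the paper's. The extraction of $\int_I r(\gamma)^2\,dt\le\epsilon$ is exactly the paper's starting point, but from there the paper argues much more crudely: it applies the Markov inequality to conclude that $E=\{t\in I: r(\gamma(t))^2\ge\sqrt\epsilon\}$ has measure at most $\sqrt\epsilon$, notes that $t\mapsto r(\gamma(t))$ itself is Lipschitz with a fixed constant ($\le 4$, coming just from boundedness of $D\varphi^{-1}f_i$ on $\overline\domain$), and concludes $\sup_I r(\gamma)\le \epsilon^{1/4}+4\sqrt\epsilon\le 5\epsilon^{1/4}$. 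No ODE identity and no $b$-dependence are needed. Your identity $\tilde\gamma_j'=-u_1\tilde\gamma_{j-1}$ is correct (the binomial cancellation checks out), as is the consequent Lipschitz constant $K=4\sqrt5/b^2$ for $r(\gamma)^2$ on $I$ (using $|\tilde\gamma_2|\le 1/b$ and $r(\gamma)\le 2/b$ on $\overline{\mathbb S}$), and in the regime where your tent argument applies it indeed gives the stronger bound $R\le(8\sqrt5)^{1/4}b^{-1/2}\epsilon^{1/4}$.

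The gap is in the final step, precisely where the $L^2$ bound is upgraded to $L^\infty$. You need the one-sided tent of base $R^2/K\le 1/\sqrt5$ to fit inside $I$, and you justify this by ``$|I|\ge 2a$'' --- but that only works if $2a\ge 1/\sqrt5$, i.e.\ $a\gtrsim 0.22$, whereas the construction explicitly takes $a$ small and $b$ large (Assumption \ref{asm:ballboxonOmega}; Figure \ref{fig:Omega} uses $a=1/10$), so this containment is not available. The fallback via the ``finer pointwise bound'' $|\tfrac{d}{dt}r^2|\le 2r(1/b+r)$ suffers from the same defect: its tent bases are of order one (up to roughly $Rb/(2\sqrt2)\le 1/\sqrt2$, respectively $1/4$), again possibly exceeding $|I|$, and the asserted output $R\le\max((8\epsilon/b)^{1/3},\sqrt{8\epsilon})$ is stated rather than derived. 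The argument is repairable without new ideas: when the tent is truncated by $I$, integrating the truncated tent still gives $\epsilon\ge\int_I r(\gamma)^2\,dt\ge \tfrac12 a R^2$ (using that the longer side of $t^*$ in $I$ has length $\ge a$), and since $R\le 2/b< a/\pi$ (because $ab>2\pi$) this forces $R\le(2\epsilon/\pi)^{1/3}\le\epsilon^{1/4}$, which is well within the claimed bound; but as written, the case analysis is incorrect at exactly this point, so the proposal does not yet close the proof.
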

\begin{proof}
  We have
    \begin{equation*}
        \int_{I} \|u(t)\|_\infty dt
        \geq \int_{I} u_1 dt
        =\int_{I}\gamma'_1(t)dt
        =\gamma_1(i_1)-\gamma_1(i_0)
        =2a.
    \end{equation*}
    because $-1\leq u_1\leq 1$ for $u=(u_1,u_2)\in\U$.

    Let
    \[E=\{t\in I:r(\gamma(t))^2\geq \sqrt \epsilon\}.\]
    Then the Markov inequality \eqref{eq:markov} gives
    \[
        |E|
        \leq \frac1{\sqrt \epsilon} \int_{R_\gamma}r(\gamma(t))^2dt
        =\frac{1}{\sqrt\epsilon} \int_{R_\gamma}  \convexL(\gamma(t),u(t))-\|u(t)\|_\infty dt
        \leq\frac{1}{\sqrt\epsilon} (2a+\epsilon-2a)=\sqrt\epsilon.
    \]
    Going back to the definition \eqref{eq:defr} of $r$, we see that, since $\varphi^{-1}\circ \gamma$ is Lipschitz continuous with Lipschitz constant $\leq \sup_{x\in\domain}\|D\varphi^{-1}f_1(x)\|+\|D\varphi^{-1}f_2(x)\|\leq 4$, $r(\gamma(t))$ is also Lipschitz continuous with Lipschitz constant 4. %
    The bound we found for $|E|$ then implies
    \[\sup_{t\in R_\gamma}r(\gamma(t))\leq \sup_{t\in R_\gamma\setminus E}r(\gamma(t))+4|E|\leq \epsilon^{1/4}+4\sqrt\epsilon\leq 5\,\epsilon^{1/4}.\qedhere \]
\end{proof}

\subsubsection{Proof of the gap \texorpdfstring{\eqref{eq:hardone}}{}}
\label{sec:mininterior}

We will use the same notations as in the previous section. 

Let $d\geq 4$, $(\gamma,u)$ be a contender in \eqref{min:6}, and let $I=[i_0,i_1]$ be an interval with $\gamma(i_0)=-a\leq \gamma(t)\leq a\leq \gamma(i_1)$ for $t\in I$. 

By Lemma \ref{lem:aprioribounds}, we know that, if
\[\int_{-\ad}^{\ad}\convexL(\gamma,u)dt\leq 2a+\epsilon,\]
then 
\begin{equation}\label{eq:emptysets}
    V_j=\emptyset\qquad\text{for all}\qquad j<\frac{1}{5\epsilon^{1/4}}.
\end{equation}
Assume that $\epsilon>0$ is such that $5^{5/2}\epsilon^{5/8}<s_0$, %
with $s_0$ as in Lemma \ref{lem:ballboxapplied},
so  that $V_j$ may be nonempty only if 
\begin{equation}\label{eq:jconstraint}
    j^{-5/2}\leq (5\epsilon^{1/4})^{5/2}<s_0.
\end{equation}
This will ensure that we can apply Lemma \ref{lem:partitionestimatesnew}\ref{it:arclengthnew}.

By Lemmas \ref{lem:mustturn} and \ref{lem:partitionestimatesnew} we know that
\begin{multline*}
    ab-2\pi
    \leq\left| \int_{I}\langle \Theta\circ\mathring\gamma,\mathring\gamma'\rangle dt\right|%
    \leq \sum_{j=1}^{\infty}2j\sum_{k=0}^{N_j}\|\tau\circ\tilde\gamma(t_{j,k}+\ell_j)-\tau\circ\tilde\gamma(t_{j,k})\|%
    \\
    \leq \sum_{j=1}^\infty C%
    j\frac{|V_{j-1}|+|V_j|+|V_{j+1}|}{j^{\frac52(d-3)}}%
    \leq \sum_{j=1}^\infty \frac{3(2a)C%
    }{j^{\frac52(d-3)-1}}%
    \leq \sum_{j=1}^\infty \frac{6aC%
    }{j^{3/2}}.%
\end{multline*}
Using \eqref{eq:emptysets} and \eqref{eq:jconstraint} again, we see that these estimates can be rewritten as
\[ab-2\pi\leq %
\sum_{j\geq 1/5\epsilon^{1/4}} \frac{6aC%
}{j^{3/2}},\]
which gives a constraint on how small $\epsilon$ can be. This proves \eqref{eq:hardone}.
\qed
 
\section{Regularization of the Lagrangian and proof of Theorem \ref{thm:example}}
    \label{sec:regularityL}

     The Lagrangian density $\nonconvexL$ defined in \eqref{eq:deflagrangians} is not very regular; not only are there numerous points where it is not differentiable, but it is also discontinuous at $x_1=\pm a$.
     However this lack of regularity is not crucial to the obtainment of results analogous to those of Theorem \ref{thm:gaps}. Here we show how to obtain Theorem \ref{thm:example} from Theorem \ref{thm:gaps} using a smoothing technique.

    \begin{theorem}\label{thm:smoothlagrangian}
        In the situation described in Section \ref{sec:setting}, and under Assumptions \ref{asm:deltaGamma}, \ref{ass:nonemptycontenders}, and  \ref{asm:ballboxonOmega}, there is a $C^\infty$ Lagrangian density $\normalL\colon\domain\times U\to\R$ such that
        \[\mincurves (\normalL,U)>\minarcs(\normalL,U),\]
        where these are defined as in \eqref{min:1} and \eqref{min:2} with $\nonconvexL$ replaced by $\normalL$.
    \end{theorem}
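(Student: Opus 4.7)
The plan is to take $\normalL$ to be a smooth mollification of $\nonconvexL$ that preserves the strict gap of \eqref{eq:minmain}. Fix a small parameter $\epsilon > 0$. Let $\psi_\epsilon \colon \R \to [0, 1]$ be a smooth bump with $\psi_\epsilon \equiv 1$ on $[-a,a]$ and $\operatorname{supp}\psi_\epsilon \subseteq [-a-\epsilon, a+\epsilon]$ (replacing the discontinuous indicator $\chi_{[-a,a]}$), let $\rho_\epsilon \colon \R^2 \to [0,\infty)$ be a standard smooth mollifier supported in the $\epsilon$-ball, and set $\Delta_\epsilon = \Delta \ast \rho_\epsilon$; this is smooth and, since $\Delta$ is $1$-Lipschitz, satisfies $|\Delta_\epsilon - \Delta| \leq \epsilon$. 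Define
\[
\normalL(x, u) = \psi_\epsilon(x_1)\bigl(1 + 2\Delta_\epsilon(u) + r(x)^2\bigr),
\]
which is $C^\infty$ because $r(x)^2$ is polynomial and both $\psi_\epsilon$ and $\Delta_\epsilon$ are smooth.

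The strategy is a robustness argument:~we show
\[
\mincurves(\normalL, U) \geq \mincurves(\nonconvexL, U) - C\epsilon \quad \text{and} \quad \minarcs(\normalL, U) \leq \minarcs(\nonconvexL, U) + C\epsilon,
\]
and then conclude via the strict gap $c_0 = \mincurves(\nonconvexL, U) - \minarcs(\nonconvexL, U) > 0$ of \eqref{eq:minmain} by taking $\epsilon < c_0 / (2C)$. The lower bound is the easy direction: $\psi_\epsilon \geq \chi_{[-a,a]}$ and $\Delta_\epsilon \geq \Delta - \epsilon$ give the pointwise inequality $\normalL(\gamma, u) \geq \nonconvexL(\gamma, u) - 2\epsilon\, \chi_{[-a,a]}(\gamma_1)$, which upon integration over $[-\ad, \ad]$ loses at most $4a\epsilon$, and infimizing over admissible classical pairs transfers the estimate to the minima. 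For the upper bound, we insert into $\minarcs(\normalL, U)$ the Young measure used in the proof of \eqref{eq:curveequalities} to achieve $\minarcs(\nonconvexL, U) = 2a$, namely $\gamma = \eta$ on $[-a,a]$ with $\nu_t = \tfrac12(\delta_{(1,1)} + \delta_{(1,-1)})$, extended to $[-\ad, \ad]$ by the $\cornersU$-horizontal connecting curves of Proposition \ref{prop:gammadelta} together with possible pauses at $\mathbf x_0, \mathbf x_1$. Its $\normalL$-cost splits into three parts: a main part on $[-a,a]$ where $|\normalL - \nonconvexL| \leq 2\epsilon$ contributes at most $4a\epsilon$; transition strips $|x_1 \pm a| \leq \epsilon$ where the integrand is bounded by a constant $M$ uniform on $\overline\domain$ (since the controls used lie in the compact set $\cornersU$); and the region $|x_1| > a + \epsilon$ where $\normalL$ vanishes. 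Putting these together yields $\minarcs(\normalL, U) \leq 2a + (4a + 2M)\epsilon$.

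The main obstacle, and the delicate point of the upper-bound step, is to ensure that the extended Young measure crosses each transition strip $|x_1 \pm a| \leq \epsilon$ in time of order $\epsilon$; if the traversal time were $\Theta(1)$, the strip contribution would not tend to zero with $\epsilon$ and the bound on $\minarcs(\normalL, U)$ would fail. This is precisely provided by the alternating $(1, \pm 1)$ construction in $\cornersU$ from the proof of Proposition \ref{prop:gammadelta}: the segments of the connecting curves joining $\eta(\pm a)$ to the cylinder caps $S_\pm$ use controls with $u_1 \equiv 1$, so a strip of width $\epsilon$ in the $x_1$-direction is traversed in time exactly $\epsilon$; the remaining parts of the connecting curves lie inside the neighborhoods $D_\pm \subset \Gamma$, which for $\epsilon$ sufficiently small are contained in $\{|x_1| > a + \epsilon\}$ where $\psi_\epsilon = 0$ and hence $\normalL \equiv 0$. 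Combining the two bounds yields $\mincurves(\normalL, U) - \minarcs(\normalL, U) \geq c_0 - (8a + 2M)\epsilon$, which is positive for $\epsilon$ sufficiently small.
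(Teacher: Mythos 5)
Your proposal is correct and follows the same overall architecture as the paper's proof: replace $\nonconvexL$ by a smooth $\epsilon$-approximation, show that both $\mincurves$ and $\minarcs$ move by at most $O(\epsilon)$, and invoke the already-established gap \eqref{eq:minmain}. The implementation differs in two places, and the comparison is instructive. For the smoothing, the paper convolves $\nonconvexL$ jointly in $(x,u)$ and handles the discontinuity at $x_1=\pm a$ only through a sup-bound on the strips, so its lower bound on $\mincurves$ rests on the per-curve limit $\lim_{\epsilon\searrow0}\int \bar L_\epsilon(\gamma,u)\,dt=\int\nonconvexL(\gamma,u)\,dt$ and the ensuing claim that the infima converge; your construction (a bump $\psi_\epsilon\geq\chi_{[-a,a]}$ plus a mollified $\Delta_\epsilon\geq\Delta-\epsilon$) instead yields the pointwise one-sided inequality $\normalL\geq\nonconvexL-2\epsilon\,\chi_{[-a,a]}(x_1)$, which gives $\mincurves(\normalL,U)\geq\mincurves(\nonconvexL,U)-C\epsilon$ uniformly over all contenders in one line — a cleaner and more robust version of that step (your constant should be $4\ad\epsilon$ rather than $4a\epsilon$, since a contender may keep $\gamma_1\in[-a,a]$ for time up to $2\ad$; this is immaterial). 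For the upper bound, you over-engineer the contender: you route through the connecting curves of Proposition \ref{prop:gammadelta} and must then lean on internals of its proof (the alternating $(1,\pm1)$ controls having $u_1\equiv1$ near the caps, and $D_\pm$ being contained in $\{|x_1|>a+\epsilon\}$), which the proposition's statement does not itself guarantee. The paper's contender from Lemma \ref{lem:2} — pause at $\mathbf x_0$ and $\mathbf x_1$ and follow $\eta$ on $[-\lambda a,\lambda a]$ with $\nu_t=\tfrac12(\delta_{(1,1)}+\delta_{(1,-1)})$ — already crosses the transition strips at unit $x_1$-speed with $r\equiv0$ and integrand at most $1+2\epsilon$, so the $O(\epsilon)$ strip estimate you worry about comes for free, with no reference to $\Gamma$ at all. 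With that simplification your argument is essentially the paper's, and it is sound as written provided one grants the (arrangeable) structural facts about the connecting curves.
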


    Theorem \ref{thm:example} is Theorem \ref{thm:smoothlagrangian} taking %
    the corresponding objects in Theorem \ref{thm:smoothlagrangian} 
    appropriately transformed by the diffeomorphism defined in Proposition \ref{prop:gammadelta}.
    
    \begin{proof}
    Take a $C^\infty$ mollifier function $\psi\colon \R^d\times\R^2\to[0,+\infty)$ with $\int_{\R^2}\int_{\R^{d}}\psi(x,u)\,dx\,du=1$ and supported in the unit ball $B^{d+2}(0,1)$ for $0<\epsilon<(1-\lambda)a$. Let also $\psi_\epsilon(x)=\epsilon^{-d-2}\psi(x/\epsilon)$, so that $\int_{\R^{d}}\int_{\R^2}\psi_\epsilon(x,u)\,dx\,du=1$ and $\psi_\epsilon$ is supported in the ball $B^{d+2}(0,\epsilon)$. The convolution
    \[\bar L_\epsilon(x,u)=(\psi_\epsilon*\nonconvexL)(x,u)=\int_{\R^2}\int_{\R^d}\psi_\epsilon(y,v)\nonconvexL(x-y,u-v)\,dy\,dv,\qquad x\in\R^d,\;u\in\R^2,\]
    is a $C^\infty$ function $\bar L_\epsilon\colon \R^d\times\R^2\to\R$ satisfying, for some $C(K)>0$ depending on the compact set $K\subseteq \overline{\domain}\times \R^2$, 
    \begin{align*}
    &|\bar L_\epsilon(x,u)-\nonconvexL(x,u)|\leq C(K)\epsilon&& \text{for $(x,u)\in K\subseteq \overline\domain\times \R^2$ with $|x_1\pm a|>\epsilon$},\\
    &0\leq \bar L_\epsilon(x,u)\leq \sup_{y\in\overline\domain}\nonconvexL(y,u) && \text{for $(x_1,u)\in \overline\domain\times \R^2$ with either $|x_1- a|\leq \epsilon$ or $|x_1+a|\leq \epsilon$}.
    \end{align*}
    The first and second estimates are true in general for any approximation of a continuous function, and the second one follows from the fact that $\psi(x,u)\,dx\,du$ is a probability measure.
    Thus, for $\eta$ as in \eqref{eq:def-eta} and $\nu_t=\frac12(\delta_{(1,1)}+\delta_{(1,-1)})$, we have (cf. Lemma \ref{lem:2})
    \[\minarcs(\bar L_\epsilon,U)\leq\int_{-\ad}^{\ad}\int_{U}\bar L_\epsilon(\eta(t),u)\,d\nu_t(u)\,dt\leq 2a+2\ad C(K)\epsilon =\minarcs(\nonconvexL,U)+2\ad C(K)\epsilon.\]
    On the other hand, for a horizontal curve $\gamma$ with control $u(t)\in U$, we have %
    \begin{equation*}
    \lim_{\epsilon\searrow0}\int_{-\ad}^{\ad} \bar L_\epsilon(\gamma(t),u(t))dt= \int_{-\ad}^{\ad}\nonconvexL(\gamma(t),u(t))\,dt.
    \end{equation*}
    In particular, this means that $\lim_{\epsilon\searrow 0} \mincurves(\bar L_\epsilon,U)=\mincurves(\nonconvexL,U)$. This, in turn, means that given $z>0$ there is $\epsilon_0>0$ such that, for all $0<\epsilon<\epsilon_0$, $\mincurves(\nonconvexL,U)\leq \mincurves(\bar L_\epsilon,U)+z$.

    By taking $\epsilon$ and $z$ small, and applying \eqref{eq:minmain}, we can ensure that
    \begin{equation}\label{esti_reg}
    \mincurves(\bar L_\epsilon,U)\geq \mincurves(\nonconvexL,U)-z>\minarcs(\nonconvexL,U)+2\ad C(K)\epsilon\geq \minarcs(\bar L_\epsilon,U).
    \end{equation}
    This proves the theorem with $\normalL=\bar L_\epsilon$.
    \end{proof}

\section{Gap for a Mayer problem and proof of Theorem \ref{thm:example2}}\label{sec:vinterlike}

By the extension procedure described in Appendix \ref{sec:nullLagrangian}, we obtain Theorem \ref{thm:secondexample} as a natural consequence of Theorem \ref{thm:smoothlagrangian}.

 \begin{theorem}[] \label{thm:secondexample}
    Let $U\subset \R^2$ be a given convex set that contains $\U$.    
    For $d\geq 5$, there exist an open set $\domain_2\subset\R^d$, a terminal cost function $\tilde\g$, a controlled vector field $\tilde \f$, a point $\tilde {\mathbf x}_0\in\domain_2$, and a target set $\destination_2$ %
    with the following properties (fixing the null running cost
    $\tilde\normalL=0$ and the terminal cost $\tilde\g$ for all the problems, omitted for readability):
    \begin{itemize}
        \item We have
        \begin{equation}\label{eq:secondexamplegap}
            \mincurves^{U} \!\left(\overline\Omega_2\right)>%
            \minarcs^{U}\!\left(\overline\Omega_2\right),%
        \end{equation}
        \item 
           However, if we do not restrict to the set $\overline\domain_2$, we have \begin{equation}\label{eq:secondexamplenogap}\mincurves^{U}(\R^{d})%
           =\minarcs^{U}(\R^{d}),%
        \end{equation}
        \item $\tilde\g$ is $C^\infty$,
        \item $\domain_2$ is a domain with $C^\infty$ boundary and diffeomorphic to the open ball of $\R^{d}$,
        \item $\destination_2\subset\domain_2$ is a target set satisfying \ref{it:targetconstraint},
        \item $\tilde f$ is $C^\infty$ and, for $t\in [0,T]$ and $x\in \overline\Omega_2$, the sets $\tilde f(t,x,U)$ are not convex. %
    \end{itemize}
\end{theorem}
To obtain Theorem \ref{thm:example2} from Theorem \ref{thm:secondexample}, apply the transformation from Proposition \ref{prop:gammadelta}, and do the obvious notational substitutions: $\domain,g,\initial,X$ in Theorem \ref{thm:example2} are $\domain_2,\tilde g,\tilde{\mathbf x}_0,X_2$ in Theorem \ref{thm:secondexample}, respectively.

\begin{proof}[Proof of Theorem \ref{thm:secondexample}] 
\label{proof:secondexample}
    Take all the definitions from Sections \ref{sec:setting}--\ref{sec:regularityL} in dimension $d-1$. In particular, the domain $\domain$ is diffeomorphic to the ball of $\R^{d-1}$. In particular $\normalL=\bar L_\epsilon$ is the smooth running cost from Theorem \ref{thm:smoothlagrangian}.

     Apply Lemma \ref{lem:prelimequivalence} in the situation $P=(\domain,\initial,\destination,\T,U,\normalL,0,\f)$ to obtain a corresponding situation $\tilde P=(\tilde\domain,\tilde{\mathbf x}_0,\tilde\destination,\T,U,0,\tilde g,\tilde f)$ such that $\tilde g$ is $C^\infty$, $\tilde \domain=\domain\times \R$ is of dimension $d$,
    \begin{equation}\label{eq:previous}\mincurves^{\normalL,0,U}(\domain,\destination)=\mincurves^{0,\tilde\g,U}(\tilde\domain,\tilde\destination),\quad\text{and}\quad 
    \minarcs^{\normalL,0,U}(\domain,\destination)=\minarcs^{0,\tilde\g,U}(\tilde\domain,\tilde\destination).
    \end{equation}
    Observe that, since $\normalL$ is not convex and since $\tilde f=(f,\normalL)$ (cf.~proof of Lemma \ref{lem:prelimequivalence}), the sets $\tilde f(t,x,U)$ are not convex. 

    Let $I$ be a bounded, open interval containing $[0,\mincurves^{\normalL,0,U}(\domain,\destination)]$.
    Then $\domain\times I\subset\tilde\domain$ contains $\tilde{\mathbf x}_0$ and an $\tilde f$-admissible curve joining $\tilde{\mathbf x}_0$ and $\tilde\destination$. 
    Take $\domain_2$ to be any subset of $\tilde\domain=\domain\times\R$ containing $\domain\times I$ and diffeomorphic to a ball. Take also $X_2$ to be any subset of $\tilde \destination$ containing $\destination\times I$. %
    From our choice of $I$, we see that, as any minimizing sequence of $\mincurves^{0,\tilde\g,U}(\tilde\domain,\tilde\destination)$ and $\minarcs^{0,\tilde\g,U}(\tilde\domain,\tilde\destination)$ will eventually involve contenders whose integral curves are completely contained in $\Omega\times I$, we have:
    \[\mincurves^{0,\tilde\g,U}(\tilde\domain,\tilde\destination)=
    \mincurves^{0,\tilde\g,U}(\domain_2,\destination_2)\qquad\text{and}\qquad
    \minarcs^{0,\tilde\g,U}(\tilde\domain,\tilde\destination)=\minarcs^{0,\tilde\g,U}(\domain_2,\destination_2).\]
    Together with \eqref{eq:previous}, this gives
    \[\mincurves^{\normalL,0,U}(\domain,\destination)=\mincurves^{0,\tilde\g,U}(\domain_2,\destination_2)\qquad\text{and}\qquad
      \minarcs^{\normalL,0,U}(\domain,\destination) =\minarcs^{0,\tilde\g,U}(\domain_2,\destination_2).\]
    From \eqref{eq:minmain} in Theorem \ref{thm:gaps} (translating time by $\ad$, so that $[-\ad,\ad]$ becomes $[0,\T]$), we have
    \[\mincurves^{\normalL,0,U}(\domain,\destination) =\mincurves(\normalL,U)
    >\minarcs(\normalL,U) =\minarcs^{\normalL,0,U}(\domain,\destination).\]
    We conclude that
    \[
    \mincurves^{0,\tilde\g,U}(\domain_2,\destination_2)
    >\minarcs^{0,\tilde\g,U}(\domain_2,\destination_2).
    \]
    This, %
    together with Lemma \ref{lem:convexequiv} implies \eqref{eq:secondexamplegap}. %

    To show \eqref{eq:secondexamplenogap}, observe that from the H\"older regularity of the sub-Riemannian distance \cite[Ch.~10]{ABB20} and the constructions in Section \ref{sec:setting} in dimension $d-1$, it follows that
    \begin{equation}\label{eq:someequality}\mincurves^{\normalL,0,U}(\R^{d-1},X)=\minarcs^{\normalL,0,U}(\R^{d-1},X)
    \end{equation}
    because ---absent the obstacle caused by the state constraint $\domain$--- the minimizer $\eta$ can be approximated using curves $\gamma_1,\gamma_2,\dots$ verifying $\gamma'_i(t)=f_1(\gamma_i(t))=(1,0,\dots,0)$ for $t$ in a subset of $[0,T]$ of measure  $\geq T-1/i$, which also join $\gamma_i(0)=\mathbf x_0$ to the target set $\gamma_i(T)\in\destination $; these curves will have nearly-minimal cost. Reasoning as above, but this time using \eqref{eq:someequality} instead of \eqref{eq:minU} and $\R^{d-1}$ instead of $\domain$, we conclude that \eqref{eq:secondexamplenogap} holds. 
 \end{proof}

\section{Gap for problems with no terminal constraints}
\label{sec:targetset}

We now take up the discussion of the role of the target set $X$ in the relaxation gap properties.
In order to illustrate this fact, we prove it is possible in our case to remove the endpoint constraint $X$ and replace it by a penalization $g$ in a terminal cost presenting the same gap property, allowing to obtain the following extensions of Theorems \ref{thm:example} and \ref{thm:example2}, corresponding to Theorems \ref{thm:example4} and \ref{thm:example3}.

\begin{theorem}\label{gap_withoutX}
\begin{enumerate}[label=\roman*.,ref=(\roman*)]
    \item\label{it:part1} Let $\Omega$ be the ball in $\R^d$, $d\ge 4$. There exist a $C^\infty$ Lagrangian $\normalL$, a $C^\infty$ controlled vector field $\f$ that is convex in the controls $u\in U$, a $C^\infty$ terminal cost $g$ and a convex control set $U\subseteq \R^2$ such that, for $X=\overline\Omega$ (i.e. there is no terminal constraint),
   the associated costs %
    satisfy 
    \begin{gather*}
        \mincurves^{\normalL,g,U}(\overline{\domain},\overline\domain) >\minarcs^{\normalL,g,U}(\overline{\domain},\overline\domain),\\
        \mincurves^{\normalL,g,U}(\R^d,\R^d)%
        =\minarcs^{\normalL,g,U}(\R^d,\R^d).
    \end{gather*}
\item\label{it:part2}  Let $\Omega$ be the ball in $\R^d$, $d\ge 5$, and $U$ be any subset of $\R^2$ containing $[-1,1]\times[-1,1]\subseteq U$. There is a smooth  terminal cost $g$, a $C^\infty$ controlled vector field $\f$, a point $\initial\in \domain$, and (taking also $L=0$ and $X=\overline \domain$),
\begin{gather*}
    \mincurves^{0,g,U}(\overline{\domain},\overline\domain) >\minarcs^{0,g,U}(\overline{\domain},\overline\domain),\\
    \mincurves^{0,g,U}(\R^d,\R^d)%
    =\minarcs^{0,g,U}(\R^d,\R^d).
\end{gather*}
\end{enumerate}
\end{theorem}
With slightly different notations, Theorem \ref{thm:example4} is part \ref{it:part1} and
Theorem \ref{thm:example3} is part \ref{it:part2}.

\begin{remark}\label{rk:remove_initial}
Note that in Theorem \ref{gap_withoutX}, the initial point is fixed, i.e. $\gamma(0)=\initial\in \R^d$. In accordance with Section~\ref{sec:role_in_tar}, the same gap holds when the latter condition is replaced by the constraint $\gamma(0)\in X_0$ where $X_0\subset \R^d$ is an open subset of $\Gamma\cap\{x_1<-a-\epsilon\}$.
    In this case, a similar penalization procedure as the one used in the proof of Theorem \ref{gap_withoutX} provides the same gap result when removing the initial constraint $X_0$ as well, via its incorporation in a supplementary initial time cost function involving $\gamma(0)$.
\end{remark}

In order to prove Theorem~\ref{gap_withoutX}, we keep the setting of Sections~\ref{sec:setting}--\ref{sec:regularityL} with the same notations, especially the same constraint set $\Omega$ and $C^\infty$ Lagrangian $\normalL=\bar L_\epsilon$, and convex control set $U$ containing $[-1,1] \times[-1,1]$.
Fix an open set $X$ such that $X\subseteq \Gamma\cap\{x\in\R^d:x_1>a+\epsilon\}$, and denote its complement in $\overline \domain$ by $X^c=\overline\domain\setminus X$, which is closed. For $\alpha>M_{\mathrm{c}}(\normalL,U)$, where $M_{\mathrm{c}}(\normalL,U)>0$ is as in \eqref{min:1}.
 Take a $C^\infty$ mollifier function $\varphi\colon \R^d\to[0,+\infty)$ with $\int_{\R^{d}}\varphi(x)\,dx=1$ and supported in the unit ball $B^{d}(0,1)$ for $0<\epsilon<(1-\lambda)a$. Let also $\varphi_\epsilon(x)=\epsilon^{-d}\psi(x/\epsilon)$, so that $\int_{\R^{d}}\varphi_\epsilon(x)\,dx=1$ and $\varphi_\epsilon$ is supported in the ball $B^{d}(0,\epsilon)$. 
 Let $\chi_{X^c}$ be the indicator function of $X^c$, and
define the $C^\infty$ convolution $g_\epsilon: \R^d\to \R$ as
   \[ g_\epsilon(x)=(\varphi_\epsilon*\chi_{X^c})(x)=\int_{\R^d}\varphi_\epsilon(y) \chi_{X^c}(x-y)\,dy\,\qquad x\in\R^d,\]
satisfying $0\leq g_\epsilon(x)\leq 1$ for every $x\in \R^d$ and $ g_\epsilon(x)=1$  for every $x\in X^c$, and $g_\epsilon(x)=0$ for $x\in X\setminus E_{\epsilon}$, with $E_\epsilon=\{x\in X \mid d(x,\partial X)< c\epsilon\}$, where $c>0$ is independent of $\epsilon>0$.

Define the following penalized optimization problems:
    \begin{itemize}

                    \item Non-convex Lagrangian and convex set of controls, optimization over measurable controls inducing horizontal curves: 
            \begin{customopti}%
                {inf}{u}
                {\int_{-\ad}^{\ad} \normalL(\gamma(t),u(t))\,dt+\alpha g_\epsilon(\gamma(\ad))}{\label{min:1p}}{%
                \mincurves^p(\normalL,U)=}
                \addConstraint{u\colon[-\ad,\ad]\to U\;\text{measurable}}
                \addConstraint{\gamma'(t)=f(t,\gamma(t),u(t))\text{ a.e.~$t$}}
                \addConstraint{\gamma(t)\in\overline\domain\;\text{for all $t$}}
                \addConstraint{\gamma(-\ad)=\initial.}
            \end{customopti}
        \item Non-convex Lagrangian and convex set of controls, optimization over Young measures:
            \begin{customopti}%
                {inf}{(\nu_t)_{t\in[-\ad,\ad]}}
                {\int_{-\ad}^{\ad}\int_{U} \normalL(\gamma(t),u)\,d\nu_t(u)\,dt+\alpha g_\epsilon(\gamma(\ad))}{\label{min:2p}}{%
                \minarcs^p(\normalL,U)=}
                \addConstraint{\text{$(\nu_t)_t$ is a Young measure on $U$}}
                \addConstraint{\text{$\gamma'(t)=\int_Uf(t,\gamma(t),u)\,d\nu_t(u)$ a.e.~$t$}}
                \addConstraint{\gamma(t)\in\overline\domain\;\text{for all $t$}}
                \addConstraint{\gamma(-\ad)=\initial.}
            \end{customopti}
            \end{itemize}

\begin{proof}[Proof of Theorem \ref{gap_withoutX}]
We prove the first statement \ref{it:part1} of the theorem. The proof of the second one \ref{it:part2} is omitted as it is obtained by the same penalization argument, but this time in the framework of a Mayer problem, starting from the gap result stated in Theorem \ref{thm:secondexample}.

We start by showing that $\minarcs^p(\normalL,U)\leq 2a<M_{\mathrm{c}}^p(\normalL,U)$.
Since $\bar g_\epsilon(x)=1$  for every $x\in X^c$ and $\alpha>M_{\mathrm{c}}(\normalL,U)>2a$, valid contenders $u$ (respectively, $(\nu_t)_t$) for the latter problems \eqref{min:1p} and \eqref{min:2p} satisfy necessarily $\gamma(T)\in X$.
Indeed, assuming that $(\nu_t)_{t\in[-\ad,\ad]}$ is a contender for Problem \eqref{min:2p} such that $\gamma(T)\notin X$, it is easy to see that by non-negativity of $\normalL$ we have \[\int_{-\ad}^{\ad}\int_{U} \normalL(\gamma(t),u)\,d\nu_t(u)\,dt+\alpha\chi_{X^c}(\gamma(T))=\int_{-\ad}^{\ad}\int_{U} \normalL(\gamma(t),u)\,d\nu_t(u)\,dt+\alpha>2a.\] The same argument holds for Problem~\eqref{min:1p}.
Now consider $(\nu_t)_{t\in[-\ad,\ad]}$ be a $U$-admissible Young measure such that \[\int_{-\ad}^{\ad}\int_{U} \normalL(\gamma(t),u)\,d\nu_t(u)\,dt\leq 2a\] and $\gamma(T)\in X$. Note that its existence is guaranteed by considering $\eta$ as in \eqref{eq:def-eta} and $\nu_t=\frac12(\delta_{(1,1)}+\delta_{(1,-1)})$. Then there exists $\epsilon_0>0$ such that we have $g_\epsilon(\gamma(\ad))=0$ for every $\epsilon \in (0,\epsilon_0)$, and it follows that \[\int_{-\ad}^{\ad}\int_{U} \normalL(\gamma(t),u)\,d\nu_t(u)\,dt+\alpha g_\epsilon(\gamma(\ad))\leq 2a.\]
           Noticing that for a contender $u$ for~\eqref{min:1p} (satisfying necessarily $\gamma(T)\in X$), we have \[ \int_{-\ad}^{\ad}\int_{U} \normalL(\gamma(t),u(t))\,dt+\alpha g_\epsilon(\gamma(\ad))\geq M_{\mathrm{c}}(\normalL,U)>2a,\] and we obtain that $M_{\mathrm{c}}^p(\normalL,U)>2a$.

Now we apply the regularization procedure of the Lagrangian made in Section~\ref{sec:regularityL}, which provides a regularized Lagrangian $\bar L_\epsilon$ of $\normalL$ for every $\epsilon>0$. Considering the associated penalized costs $\minarcs^p(\bar L_\epsilon,U)$ and $\mincurves^p(\bar L_\epsilon,U)$, we obtain, as in~\eqref{esti_reg},
the existence of $\epsilon_1\in (0,\epsilon_0)$ such that for every $\epsilon \in (0,\epsilon_1)$,
 \[\mincurves^p(\bar L_\epsilon,U)>\minarcs^p(\normalL,U)+2\ad C(K)\epsilon\geq \minarcs^p(\bar L_\epsilon,U),\] where $C(K)>0$ is defined as in Section~\ref{sec:regularityL}.
 The proof of the theorem can be concluded by considering $\normalL=\bar L_\epsilon$ and $g=g_\epsilon$, for a given $\epsilon\in (0,\epsilon_1)$.
\end{proof}
 
\section{Failure of the Filippov-Wa\v zewski approximation in the presence of state constraints}
\label{sec:FW}
Let $\Omega\subset\R^d$ be a bounded open set  with smooth boundary, and define for $x\in \partial \Omega$, the vector $n(x)$ of $\R^d$ as the internal unit normal vector to $\partial \Omega$ at the point $x$. 
For a controlled vector field $f$ and a control set $\controls$,
define the set-valued map $\F(t,x)$ given by
\begin{equation}
    \label{eq:defF}\F(t,x)=\{f(t,x,u):u\in \controls\},\qquad t\in [0,\T], \;x\in \overline \domain,
\end{equation}
 for $x \in \overline \Omega$,
and
\begin{align*}
    \mathcal{S}_{\overline{\domain}}(x)=&\{\gamma\colon[0,T]\to\overline{\domain} \, \ \text{absolutely continuous} \mid \ \gamma(0)=x, \ \gamma'(t)\in \F(t,\gamma(t))\},\\   
    \mathcal{S}_{\overline{\domain}}^r(x)=&\{\gamma\colon[0,T]\to\overline{\domain} \, \ \text{absolutely continuous} \mid \ \gamma(0)=x, \ \gamma'(t)\in \operatorname{conv}\F(t,\gamma(t))\}.
\end{align*}

\begin{theorem}{\cite[Corollary 3.2]{frankowska2000filippov}}
For $d\geq 1$, let $U\subset \R^2$ be compact, and $\f$ be a smooth controlled vector field.
Assume that there exists $\delta>0$ such that \begin{equation}\label{eq:IPC}\max_{v\in F(t,x)}\langle v , n(x)\rangle > \delta,\end{equation} for every $x\in \partial \Omega$.
 Then the set $\mathcal{S}_{\overline{\domain}}(x)$ is dense in $\mathcal{S}_{\overline{\domain}}^r(x)$ for the metric of uniform convergence, for any $x \in \overline \domain$.

\end{theorem}

This contrasts with the following observation.
Indeed, using a contrapositive argument, we obtain the following result as a corollary of Theorem \ref{gap_withoutX}\ref{it:part2}.%
This suggests that some supplementary higher order conditions have to be imposed in order to guarantee the same density result when the inward-pointing condition \eqref{eq:IPC} is violated.

\begin{corollary}[of Theorem \ref{gap_withoutX}\ref{it:part2}]
    \label{coro:noFW}
For $d\geq 5$, there exist a bounded open set $\domain \subset\R^d$ with smooth boundary, a non-convex compact set of controls $U\subset\R^2$, a smooth controlled vector field $\f(x,u)$, a point $\initial \in\domain$, such that $\mathcal{S}_{\overline{\domain}}(\initial)$ is not dense in $\mathcal{S}_{\overline{\domain}}^r(\initial)$ for the metric of uniform convergence.

In fact, more is true: there exist $\delta,\varepsilon>0$ and a curve $\gamma$ in $\mathcal S^{\mathrm r}_{\overline\Omega}(\initial)\setminus \mathcal S_{\overline\Omega}(\initial)$ such that, for all $x\in\R^d$ with $\|x-\initial\|\leq \delta$ and all $\theta\in  \mathcal S_{\overline\Omega}(x)$
\[\|\gamma-\theta\|_\infty\geq\varepsilon.\]
\end{corollary}
This result contains Theorem \ref{thm:FW-intro}.
\begin{proof}
    First note that, by the discussion of Section \ref{sec:role_in_tar} (cf.~Remark \ref{rk:remove_initial}), the statement of Theorem \ref{gap_withoutX}\ref{it:part2} remains valid when the initial set is replaced by a small ball $B(\initial,\delta)$ around $\initial$. Then, since $g$ is a continuous function, we see that, if it were possible to approximate the integral curve $\tilde\eta$ corresponding to the Young measure that minimizes the relaxed problem, in the supremum norm, with a sequence of admissible curves $\theta_i$, we would be able to get their endpoints arbitrarily close, $\|\theta_i(T)-\tilde\eta(T)\|\to 0$, and hence we would have $g(\theta_i(T))-g(\tilde\eta(T))\to 0$, contradicting the existence of the gap found in the statement of the theorem. The gap instead means that any curves $\theta$ starting in $B(\initial,\delta)$  must end at a distance $\|\theta(T)-\tilde\eta(T)\|\geq\varepsilon>0$. Taking $\gamma=\tilde\eta$, we obtain the corollary.
\end{proof} 

\appendix
\section{On the equivalence of certain problems}
\label{sec:prelims}
\subsection{Transforming between Bolza and Lagrange problems into Mayer problems}
\label{sec:nullLagrangian}

In this section we include two well-known lemmas that allow for useful reformulations of problems \eqref{min:orig}--\eqref{min:relaxed}. %

The following lemma, which we will prove for completeness, is well known in the community. It states that every optimization problem $P$ of type \eqref{min:orig}--\eqref{min:relaxed} %
has an equivalent counterpart $\tilde P$ with null Lagrangian density $\tilde\normalL=0$.
\begin{lemma}
    \label{lem:prelimequivalence}
    Denote by $P=(\domain,\initial,\destination,\T,\controls,\normalL,\g,\f)$ the situation determined by a domain $\domain\subseteq\R^d$, an initial point $\initial\in\domain$, a target set $\destination\subset\domain$, an amount of time $\T>0$, a set of controls $\controls\subset\R^m$, a Lagrangian density $\normalL\colon\R\times\domain\times\controls\to\R$, an initial-final cost  function $\g\colon\R^d\times\R^d\to\R$, and a controlled vector field $\f\colon\R\times\domain\times\controls\to\R^d$. There exists a corresponding situation $\tilde P=(\tilde \domain,\tilde {\mathbf x}_0,\tilde\destination,\T,\controls,0,\tilde\g,\tilde\f)$ determined by corresponding objects $\tilde\domain,\tilde {\mathbf x}_0,\tilde\destination,\tilde\g,\tilde\f$ (and $\tilde \T=\T$, $\tilde\controls=\controls$,  $\tilde\normalL=0$) such that
    \begin{gather}
        \label{eq:equivcurves}\mincurves^{\normalL,\g,\controls}(\domain,\destination)=\mincurves^{0,\tilde\g,\tilde\controls}(\tilde\domain,\tilde\destination),\\
        \label{eq:equivarcs}\minarcs^{\normalL,\g,\controls}(\domain,\destination)=\minarcs^{0,\tilde\g,\tilde\controls}(\tilde\domain,\tilde\destination).%
    \end{gather}
    There is $A_0$ such that there is a one-to-one correspondence between the sets of contenders $x$ admissible in the situation $P$ and the contenders $\tilde x$ admissible in $\tilde P$ (which can be curves, Young measures, or occupation measures) with $A_\normalL(\tilde x)\leq A_0$, where $A_\normalL$ indicates the corresponding integral of $\normalL$, as indicated in the corresponding problems \eqref{min:orig} and \eqref{min:relaxed}. 
    Moreover, given two controls $u_1$ and $u_2$ contending in \eqref{min:orig}, with integral curves $\gamma_1$ and $\gamma_2$, satisfying  $\gamma_i(t)=\initial+\int_0^tf(t,\gamma_i(t),u_i(t))dt$, the corresponding curves $\tilde\gamma_1$ and $\tilde\gamma_2$, satisfying $\tilde\gamma_i(t)=\tilde{\mathrm x}_0+\int_0^t\tilde f(t,\tilde\gamma_i(t),u_i(t))dt$, verify
    \begin{gather*}
        \|\gamma_1(t)-\gamma_2(t)\|\leq \|\tilde\gamma_1(t)-\tilde\gamma_2(t)\|,\qquad t\in[0,\T],\\
        \left|\int_0^TL(t,\gamma_1(t),u_1(t))dt-\int_0^TL(t,\gamma_2(t),u_2(t))dt\right|\leq \|\tilde\gamma_1(\T)-\tilde\gamma_2(\T)\|.
    \end{gather*}
    Analogous relations hold for contenders $(\nu^1_t)_{t\in[0,T]}$ and $(\nu^2_t)_{t\in[0,T]}$ of \eqref{min:relaxed} and their integral curves.
    Finally, if $\g$ is $C^\infty$, so is $\tilde\g$.
\end{lemma}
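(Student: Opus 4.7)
The plan is to use the classical dimension-lift trick: introduce one extra coordinate that tracks the running cost, transferring the integral of $L$ into the terminal value of this new coordinate, at which point it can be folded into the endpoint cost $\tilde g$. Concretely, I would set
\[
\tilde\domain=\domain\times(-A_0-1,A_0+1),\qquad \tilde{\mathbf x}_0=(\initial,0),\qquad \tilde\destination=\destination\times\R,
\]
with $A_0$ chosen larger than the cost $\mincurves^{L,g,U}(\domain,\destination)+1$, and then define
\[
\tilde f(t,(x,s),u)=(f(t,x,u),\,L(t,x,u))\in\R^{d+1},\qquad \tilde g(x,s)=g(x)+s,\qquad \tilde L\equiv 0.
\]
The added coordinate $s$ satisfies $s'(t)=L(t,\gamma(t),u(t))$, so along any admissible $\tilde\gamma=(\gamma,s)$ starting from $(\initial,0)$ one has $s(T)=\int_0^T L(t,\gamma(t),u(t))\,dt$.

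Next I would verify the two bijections. Given a contender $(u,\gamma)$ of \eqref{min:orig} in $P$, define $s(t)=\int_0^t L$ and $\tilde\gamma=(\gamma,s)$; provided $\int_0^T L\le A_0$ (which is the case for any near-optimal contender and ensures $\tilde\gamma$ stays in $\tilde\domain$), this is a valid contender in $\tilde P$ with terminal cost
\[
\tilde g(\tilde\gamma(T))=g(\gamma(T))+\int_0^T L(t,\gamma(t),u(t))\,dt,
\]
which is exactly the original cost. Conversely, given a contender in $\tilde P$, the projection onto the first $d$ coordinates is a contender in $P$, and the last coordinate is forced to equal the integral of $L$ by the dynamics, so the identity of costs holds in both directions. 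This gives \eqref{eq:equivcurves}. For \eqref{eq:equivarcs} the same map $(\nu_t)\mapsto(\nu_t)$ works, since the lifted vector field is linear in the extra component in the sense that $\int_U \tilde f(t,(x,s),u)\,d\nu_t(u)=(\int_U f\,d\nu_t,\int_U L\,d\nu_t)$, and the resulting curve's last component again tracks $\int_0^t\int_U L\,d\nu_s(u)\,ds$.

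For the Lipschitz-type estimates, note that by construction $\tilde\gamma_i=(\gamma_i,s_i)$ with $s_i(t)=\int_0^t L(\tau,\gamma_i(\tau),u_i(\tau))\,d\tau$. Since the first $d$ coordinates of $\tilde\gamma_i$ are literally $\gamma_i$,
\[
\|\gamma_1(t)-\gamma_2(t)\|\le\|\tilde\gamma_1(t)-\tilde\gamma_2(t)\|,
\]
and
\[
\left|\int_0^T L(t,\gamma_1,u_1)\,dt-\int_0^T L(t,\gamma_2,u_2)\,dt\right|=|s_1(T)-s_2(T)|\le\|\tilde\gamma_1(T)-\tilde\gamma_2(T)\|,
\]
as desired. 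Smoothness of $\tilde g(x,s)=g(x)+s$ is immediate from smoothness of $g$.

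The main potential obstacle is making sure the domain restriction in the $s$ variable does not spuriously cut off admissible contenders: we must pick $A_0$ large enough that every contender in $P$ with cost below the minimum plus $1$ lifts to $\tilde P$ without the $s$-coordinate leaving $(-A_0-1,A_0+1)$, and conversely any contender in $\tilde P$ with $A_L(\tilde x)\le A_0$ projects back to an admissible contender in $P$. Taking $A_0$ larger than $\sup\{|L|:\text{along contenders of interest}\}\cdot T+|\mincurves^{L,g,U}|$ (which is finite on the relevant set of contenders) suffices; one could alternatively take $\tilde\domain=\domain\times\R$ and drop $A_0$ altogether, at the price of an unbounded $\tilde\domain$, which is harmless for the applications in the main text.
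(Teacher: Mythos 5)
Your proposal is correct and is essentially the paper's own argument: the paper performs exactly this state-augmentation, taking $\tilde\f(t,x,u)=(\f(t,\pi(x),u),\normalL(t,\pi(x),u))$, $\tilde\g(x)=x_{d+1}+\g(\pi(x))$, $\tilde{\mathbf x}_0=(\initial,0)$, $\tilde\destination=\destination\times\R$, with the costs and the Lipschitz-type estimates following immediately as you describe. The only difference is that the paper takes $\tilde\domain=\domain\times\R$ outright (your own suggested fallback), which avoids the slightly delicate choice of $A_0$ needed to keep the running-cost coordinate inside a bounded interval when $\normalL$ may change sign or be unbounded along contenders.
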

\begin{proof}
    Let $\pi\colon\R^{n+1}\to\R^d$ be the projection given by $\pi(x_1,\dots,x_{n+1})=(x_1,\dots,x_n)$ and
    \begin{align*}
        \tilde \domain&=\domain\times\R\subseteq\R^{n+1},%
        &\tilde{\mathbf x}_0&=(\initial,0)\in \R^{n+1},\\
        \tilde \destination&=\destination\times\R,%
        &\tilde \T&=\T,\\
        \tilde\controls&=\controls,%
        &\tilde \normalL(t,x,u)&=0,\\
        \tilde \g(x)&=x_{n+1}+\g(\pi(x)),%
        &\tilde \f(t,x,u)&=(\f(t,\pi(x),u),\normalL(t,\pi(x),u)).
    \end{align*}
    Observe that if a curve $\tilde\gamma$ satisfies $\tilde\gamma(0)=\tilde{\mathbf x}_0$ and $\tilde\gamma'(t)=\tilde\f(t,\tilde\gamma(t),u(t))$ for almost every $t$, then the last coordinate of $\tilde\gamma$,
    \[\tilde\gamma_{n+1}(t)=\int_0^t\normalL(s,\pi(\tilde\gamma(s)),u(s))\,ds,\]
    keeps track of the integral of $\normalL$. To a contender $u$ for \eqref{min:orig} in situation $P$ corresponds the same contender $u$ in situation $\tilde P$, and the curve $\tilde \gamma$ ends up being given by
    \begin{equation}\label{eq:liftgamma}
        \tilde\gamma(t)=(\gamma(t),\int_0^tL(s,\gamma(s),u(s))\,ds),\qquad t\in[0,\T],
    \end{equation}
    and also corresponds bijectively to the corresponding curve $\gamma$ as $\gamma=\pi\circ\tilde\gamma$. The last coordinate is completely determined by the definition of the controlled vector field $\tilde\f$. Moreover, the cost also coincides:
    \begin{multline*}
        \int_0^\T\tilde\normalL(s,\tilde\gamma(s),u(s))\,ds+\tilde\g(\tilde\gamma(\T))=0+\tilde\g(\tilde\gamma(\T))
        =\tilde\gamma_{n+1}(\T)+g(\pi(\tilde\gamma(\T))) \\
        =\int_0^\T \normalL(s,\gamma(s),u(s))\,ds+g(\gamma(\T)).
    \end{multline*}
    The estimates claimed at the end of the statement of the lemma follow immediately from  the definitions.
\end{proof}

\subsection{The convexification of the velocities is equivalent to the Young measure relaxation}
\label{sec:convexificaton}

A fundamental technique in optimal control is the translation of the problem into a variational one. This is achieved by exchanging the controlled vector field $f(t,x,u)$ by a differential inclusion constraint associated to the set-valued map $\bar \F(t,x)$ given by \eqref{eq:defF}.
The Lagrangian density $\normalL(t,x,u)$ is also replaced by a two functions on the tangent bundle: $\bar\normalL$ and  $\var\normalL(t,x,v)$, defined by
\begin{equation}
        \label{def:barL}
    \bar\normalL(t,x,v)=\inf\{ \normalL(t,x,u): f(t,x,u)=v,\;u\in \controls\},
\end{equation} 
and
\begin{equation}
   \label{def:varL}
    \var\normalL(t,x,v)=\sup\{\ell(v)\;|\;\ell\colon\R^d\to\R\;\text{linear},\;\ell(f(t,x,u))\leq \normalL(t,x,u)\;\forall u\in \controls\},
\end{equation}
for $t\in[0,\T]$, $x\in\domain$, $v\in\R^d$. Observe that $\var\normalL$ is the convexification of $\bar\normalL$.
With these definitions, the equivalent of problem \eqref{min:orig} is
\begin{customopti}%
                {inf}{\gamma}
                {\int_{0}^T \bar\normalL(t,\gamma(t),\gamma'(t))\,dt+\g(\gamma(T))}%
                {\label{min:variational}}%
                {%
                \variational^{\bar\normalL}=\variational^{\bar\normalL,\g}({\domain},\destination)=}
                \addConstraint{\gamma\colon[0,T]\to\domain\;\,\text{absolutely continuous}}
                \addConstraint{\gamma'(t)\in \bar\F(t,\gamma(t)),\;\text{a.e.}\;t\in[0,\T]}
                \addConstraint{\gamma(0)=\initial,\;
                \gamma(\T)\in\destination,}
\end{customopti}
and its convex relaxation is
\begin{customopti}%
                {inf}{\gamma}
                {\int_{0}^T \var\normalL(t,\gamma(t),\gamma'(t))\,dt+\g(\gamma(T))}%
                {\label{min:variationalrelaxed}}%
                {%
                \variationalrelaxed^{\var\normalL}=\variationalrelaxed^{\var\normalL,\g}({\domain},\destination)=}
                \addConstraint{\gamma\colon[0,T]\to\domain\;\,\text{absolutely continuous}}
                \addConstraint{\gamma'(t)\in \operatorname{conv}\bar\F(t,\gamma(t)),\;\text{a.e.}\;t\in[0,\T]}
                \addConstraint{\gamma(0)=\initial,\;
                \gamma(\T)\in\destination.}
\end{customopti}
Here, $\operatorname{conv}\bar\F(t,x)$ is the closed convex hull of $\bar\F(t,x)$. 

\begin{lemma}
    \label{lem:convexequiv}
    Fixing $\domain, \initial, \destination, \normalL, \g,\f$ as above, and letting $\bar\normalL$, $\var\normalL$, and $\bar\F$ be as in \eqref{def:barL}, \eqref{def:varL}, and \eqref{eq:defF}, respectively, we have
    \[\variational^{\bar\normalL}=\mincurves^{\normalL}\geq
    \minarcs^{\normalL}=\variationalrelaxed^{\var\normalL}.\]
\end{lemma}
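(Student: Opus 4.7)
The plan is to split the claim into three pieces: the middle inequality $\mincurves^{\normalL}\geq \minarcs^{\normalL}$ and the two flanking equalities $\variational^{\bar\normalL}=\mincurves^{\normalL}$ and $\minarcs^{\normalL}=\variationalrelaxed^{\var\normalL}$. The middle inequality is immediate: every classical contender $u\colon[0,\T]\to\controls$ with integral curve $\gamma$ gives rise to the Dirac Young measure $\nu_t=\delta_{u(t)}$ whose integral curve is again $\gamma$ and whose cost coincides termwise with the classical cost, so $\minarcs^{\normalL}\leq\mincurves^{\normalL}$.

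For $\variational^{\bar\normalL}=\mincurves^{\normalL}$, the inequality $\variational^{\bar\normalL}\leq\mincurves^{\normalL}$ is direct from the definition of $\bar\normalL$: a classical contender $(u,\gamma)$ is also a variational one, and $\bar\normalL(t,\gamma(t),\gamma'(t))\leq\normalL(t,\gamma(t),u(t))$ pointwise. For the reverse, I take a variational contender $\gamma$ and an arbitrary $\epsilon>0$, and apply a Filippov--Castaing type measurable selection theorem to the set-valued map
\[
t\mapsto\{u\in\controls:\f(t,\gamma(t),u)=\gamma'(t),\;\normalL(t,\gamma(t),u)\leq\bar\normalL(t,\gamma(t),\gamma'(t))+\epsilon\},
\]
which is nonempty a.e.\ by definition of $\bar\normalL$. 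The resulting measurable control $u_\epsilon$ produces the same curve $\gamma$ and realizes $\mincurves^{\normalL}\leq\variational^{\bar\normalL}+\epsilon\T$; letting $\epsilon\searrow0$ yields the equality.

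For $\minarcs^{\normalL}=\variationalrelaxed^{\var\normalL}$, the inequality $\variationalrelaxed^{\var\normalL}\leq\minarcs^{\normalL}$ follows from a Jensen-type argument. Given a Young measure $(\nu_t)$ with integral curve $\gamma$, the velocity satisfies $\gamma'(t)\in\clconv\bar\F(t,\gamma(t))$, so $\gamma$ is a $\variationalrelaxed$-contender. Moreover, for every linear $\ell$ admissible in the definition \eqref{def:varL} of $\var\normalL$,
\[
\ell(\gamma'(t))=\int_{\controls}\ell(\f(t,\gamma(t),u))\,d\nu_t(u)\leq\int_{\controls}\normalL(t,\gamma(t),u)\,d\nu_t(u);
\]
taking the supremum over $\ell$ yields $\var\normalL(t,\gamma(t),\gamma'(t))\leq\int\normalL\,d\nu_t$ pointwise, which integrates to the desired cost inequality (the endpoint term $\g(\gamma(\T))$ is identical in both formulations).

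The reverse direction $\minarcs^{\normalL}\leq\variationalrelaxed^{\var\normalL}$ is the technical heart of the lemma. Given a $\variationalrelaxed$-contender $\gamma$, I plan to represent the point $(\gamma'(t),\var\normalL(t,\gamma(t),\gamma'(t)))$ as a convex combination of at most $d+2$ points of the set $\{(\f(t,\gamma(t),u),\normalL(t,\gamma(t),u)):u\in\controls\}\subset\R^{d+1}$ by Carathéodory's theorem, and to build a Young measure $\nu_t$ concentrated on the corresponding controls with the corresponding weights. The main obstacle will be carrying this selection out \emph{measurably} in $t$: this requires an $\epsilon$-slack reformulation of the defining inequality, so that the auxiliary set-valued map assigning to $t$ the $(d+2)$-tuples of controls and nonnegative weights witnessing the decomposition has closed, nonempty values with measurable graph, which then allows invoking a Kuratowski--Ryll-Nardzewski type selection. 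The resulting Young measure is a $\minarcs$-contender on the same curve $\gamma$ whose cost exceeds that of $\gamma$ in $\variationalrelaxed^{\var\normalL}$ by at most $\epsilon\T$; letting $\epsilon\searrow 0$ closes the chain of (in)equalities.
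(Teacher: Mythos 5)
Your proposal is correct and follows the same overall architecture as the paper's proof: the Dirac-measure embedding for $\mincurves^{\normalL}\geq\minarcs^{\normalL}$, a duality/Jensen argument for $\variationalrelaxed^{\var\normalL}\leq\minarcs^{\normalL}$ (the paper applies Jensen's inequality to the convex function $\var\normalL(t,x,\cdot)$ after noting $\var\normalL(t,x,\f(t,x,u))\leq\normalL(t,x,u)$, which is the same computation as taking the supremum over admissible $\ell$ as you do), and a measure-construction plus measurable selection for the reverse direction. The one place where you genuinely diverge is that last step: the paper invokes Choquet's theorem to produce, for a.e.\ $t$, a representing measure $\bar\nu_t$ on $\bar\F(t,\gamma(t))$ with barycenter $\gamma'(t)$, and then lifts it to a measure on $\controls$ by a measurable selection with an $\varepsilon$-slack on the cost; you instead apply Carath\'eodory's theorem to the lifted set $\{(\f(t,x,u),\normalL(t,x,u)):u\in\controls\}\subset\R^{d+1}$ to get a finitely supported $\nu_t$ (at most $d+2$ atoms), selecting atoms and weights measurably. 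Your variant buys something: by convexifying velocity and cost simultaneously in $\R^{d+1}$, it makes explicit the point the paper's Choquet step leaves implicit, namely that the representing measure must be chosen so that its $\bar\normalL$-cost is close to $\var\normalL(t,\gamma(t),\gamma'(t))$ (Choquet alone only matches the barycenter), and it yields atomic Young measures, which simplifies the selection argument to a finite-dimensional one of Filippov type; the paper's route avoids bookkeeping of weights and atom counts. You also spell out the Filippov-type selection behind $\variational^{\bar\normalL}=\mincurves^{\normalL}$, which the paper dismisses as immediate from the definitions. One caveat applies equally to both arguments and is not a gap relative to the paper: both require $\gamma'(t)$, which a priori lies in the \emph{closed} convex hull $\clconv\bar\F(t,\gamma(t))$, to be an exact barycenter (resp.\ exact finite convex combination) of points of $\bar\F(t,\gamma(t))$, and require the convex envelope of $\bar\normalL$ to be attained up to $\varepsilon$ by such combinations; this is automatic under the compactness and continuity hypotheses implicit in the paper's applications, and neither proof addresses it beyond that.
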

\begin{proof}
    That $\mincurves^{\normalL}\geq %
    \minarcs^{\normalL}$ follows from the definitions. %
    It follows immediately from the definitions  that $\variational^{\bar\normalL}=\mincurves^{\normalL} \geq\variationalrelaxed^{\var\normalL}$.  

    Let us now show that $\minarcs^{\normalL}\leq \variationalrelaxed^{\var{\normalL}}$. Take a contender $\gamma$ for \eqref{min:variationalrelaxed}. Let $\varepsilon>0$.
    For every $t$ for which $\gamma'(t)$ is defined, by Choquet's theorem we can find a probability measure $\bar\nu_t$ supported on $\bar\F(t,\gamma(t))$ such that $\bar\gamma'(t)=\int_{\bar\F(t,\gamma'(t))}v\,d\bar\nu_t(v)$. Given $\bar\nu_t$, by the Measurable Selection Theorem \cite{aubinfrankowska}, there is at least one measure $\nu_t$ with $\tilde f(t,x,\cdot)_{\#}\nu_t=\bar\nu_t$, and $\nu_t$ supported on $(t,x,u)$ with $\normalL(t,x,u)+\varepsilon\geq \var\normalL(t,x,v)$ for all $u\in\controls$ with $v=f(t,x,u)$. This gives a contender $(\gamma,(\nu_t)_t)$ for \eqref{min:relaxed} with cost at most $\varepsilon \T$ greater than that of $\gamma$. Since this is true for all $\varepsilon$ and every contender $\gamma$, we have $\minarcs^{\normalL}\leq \variationalrelaxed^{\var{\normalL}}$.

    Similarly, to show that $\minarcs^{\normalL}\geq \variationalrelaxed^{\var{\normalL}}$, we can take a contender $(\nu_t)_t$ for \eqref{min:relaxed} and apply Jensen's inequality to see that, if $\gamma(t)=\initial+\int_0^t\int_\controls f(s,x,u)\,d\nu_s(u)ds\in\domain$, then $\gamma'(t)=\int_\controls f(t,\gamma(t),u)\,d\nu_t(u)\in\operatorname{conv}\bar\F(t,\gamma(t))$  for almost every $t$, and
    \begin{equation*}
       \int_{0}^\T\int_\controls\normalL(t,\gamma(t),u)\,d\nu_t(u)\,dt\geq \int_0^\T\int_\controls \var\normalL(t,\gamma(t),f(t,\gamma(t),u))\,d\nu_t(u)\,dt %
       \geq \int_0^T\var\normalL(t,\gamma(t),\gamma'(t))\,dt. 
    \end{equation*}
    In other words, the $\normalL$-cost of $(\nu_t)$ is bounded from below by the $\var\normalL$-cost of $\gamma$, which is what we wanted to show.
\end{proof}

\printbibliography

@unpublished{OurSufficientPaper,
    author = {Augier, Nicolas and Korda, Milan and Rios-Zertuche, Rodolfo},
    title = {Sufficient conditions for the absence of relaxation gaps in state-constrained optimal control},
    note = {Preprint}
}

@article{augier2024symmetry,
  title={Symmetry reduction and recovery of trajectories of optimal control problems via measure relaxations},
  author={Augier, Nicolas and Henrion, Didier and Korda, Milan and Magron, Victor},
  journal={ESAIM: Control, Optimisation and Calculus of Variations},
  volume={30},
  pages={63},
  year={2024},
  publisher={EDP Sciences}
}

@article{korda2022gap,
  title={The gap between a variational problem and its occupation measure relaxation},
  author={Korda, Milan and Rios-Zertuche, Rodolfo},
  journal={arXiv preprint arXiv:2205.14132},
  year={2022}
}

@article {OCP08,
    AUTHOR = {Lasserre, Jean B. and Henrion, Didier and Prieur, Christophe
              and Tr\'{e}lat, Emmanuel},
     TITLE = {Nonlinear optimal control via occupation measures and
              {LMI}-relaxations},
   JOURNAL = {SIAM J. Control Optim.},
  FJOURNAL = {SIAM Journal on Control and Optimization},
    VOLUME = {47},
      YEAR = {2008},
    NUMBER = {4},
     PAGES = {1643--1666},
      ISSN = {0363-0129},
   MRCLASS = {49J15 (28A99 49J45 90C22 93C10)},
  MRNUMBER = {2421324},
MRREVIEWER = {Ilaria Fragal\`a},
       DOI = {10.1137/070685051},
       URL = {https://doi.org/10.1137/070685051},
}

@article{V93,
author = {Vinter, Richard},
title = {Convex Duality and Nonlinear Optimal Control},
journal = {SIAM Journal on Control and Optimization},
volume = {31},
number = {2},
pages = {518-538},
year = {1993},
doi = {10.1137/0331024},

URL = { 
    
        https://doi.org/10.1137/0331024
    
    

},
eprint = { 
    
        https://doi.org/10.1137/0331024
    
    

}
,
    abstract = { Problems in nonlinear optimal control can be reformulated as convex optimization problems over a vector space of linear functionals. In this way, methods of convex analysis can be brought to bear on the task of characterizing solutions to such problems. The result is a necessary and sufficient condition of optimality that generalizes well-known sufficient conditions, referred to as verification theorems, in dynamic programming; as a byproduct, we obtain a representation of the minimum cost in terms of the upper envelope of subsolutions to the Hamilton–Jacobi equation. It is a striking illustration of the wide range of problems to which convex analysis, and, in particular, convex duality, is applicable. The approach, applied to parametric problems in the calculus of variations, was pioneered by L. C. Young [Lectures on the Calculus of Variations and Optimal Control Theory, W. B. Saunders, Philadelphia, PA, 1969]. As recent work has shown, however, it is equally fruitful when applied in optimal control. This paper, which is expository, offers a self-contained treatment of the application of methods of convex duality to general nonlinear problems in deterministic optimal control. At the same time, it provides extensions of previously published results in several directions. A simple proof is given of the main “convex closure” theorem relating generalized flows and relaxed arcs; this is based on mollification techniques recently developed by Fleming and Vermes [SIAM J. Control Optim., 27 (1989), pp. 1136–1155] for constructing smooth subsolutions to the Hamilton–Jacobi equation. }
}

@article{jean2001uniform,
  title={Uniform estimation of sub-Riemannian balls},
  author={Jean, Fr{\'e}d{\'e}ric},
  journal={Journal of Dynamical and Control Systems},
  volume={7},
  number={4},
  pages={473--500},
  year={2001},
  publisher={Springer}
}

@book {ABB20,
    AUTHOR = {Agrachev, Andrei and Barilari, Davide and Boscain, Ugo},
     TITLE = {A comprehensive introduction to sub-{R}ie\-man\-nian geometry},
    SERIES = {Cambridge Studies in Advanced Mathematics},
    VOLUME = {181},
      NOTE = {},
 PUBLISHER = {Cambridge University Press, Cambridge},
      YEAR = {2020},
     PAGES = {},
      ISBN = {978-1-108-47635-5},
   MRCLASS = {53C17},
  MRNUMBER = {3971262},
MRREVIEWER = {Luca\ Rizzi},
}

@article{frankowska2000filippov,
  title={Filippov's and Filippov--Wa{\.z}ewski's Theorems on Closed Domains},
  author={Frankowska, H{\'e}l{\`e}ne and Rampazzo, Franco},
  journal={Journal of Differential Equations},
  volume={161},
  number={2},
  pages={449--478},
  year={2000},
  publisher={Elsevier}
}

@book{aubinfrankowska,
  title={Set-valued analysis},
  author={Aubin, Jean-Pierre and Frankowska, H{\'e}l{\`e}ne},
  year={1990},
  publisher={Birkh{\"{a}}user, Boston}
}

@article{palladino2014minimizers,
  title={Minimizers that are not also relaxed minimizers},
  author={Palladino, Michele and Vinter, Richard B},
  journal={SIAM Journal on Control and Optimization},
  volume={52},
  number={4},
  pages={2164--2179},
  year={2014},
  publisher={SIAM}
}

@article{jean2003entropy,
  title={Entropy and complexity of a path in sub-Riemannian geometry},
  author={Jean, Fr{\'e}d{\'e}ric},
  journal={ESAIM: Control, Optimisation and Calculus of Variations},
  volume={9},
  pages={485--508},
  year={2003},
  publisher={EDP Sciences}
}

@book{fulton2013algebraic,
  title={Algebraic topology: a first course},
  author={Fulton, William},
  volume={153},
  year={2013},
  publisher={Springer Science \& Business Media}
}

@incollection{korda2018moments,
  author      = "Korda, Milan and Henrion, Didier and Lasserre, Jean-Bernard",
  title       = "Moments and convex optimization for analysis and control of nonlinear {PDEs}",
  editor      = "E. Trelat, E. Zuazua",
  booktitle   = "Handbook of Numerical Analysis",
  publisher   = "Elsevier",
  year        = "2022",
  volume        = "23",
  pages       = "339--366",
  chapter     = 10,
}

@article {bettiolfrankowska,
    AUTHOR = {Bettiol, Piernicola and Frankowska, H\'el\`ene},
     TITLE = {Regularity of solution maps of differential inclusions under
              state constraints},
   JOURNAL = {Set-Valued Anal.},
  FJOURNAL = {Set-Valued Analysis. An International Journal Devoted to the
              Theory of Multifunctions and its Applications},
    VOLUME = {15},
      YEAR = {2007},
    NUMBER = {1},
     PAGES = {21--45},
      ISSN = {0927-6947,1572-932X},
   MRCLASS = {34A60 (49K24 49N60)},
  MRNUMBER = {2308710},
MRREVIEWER = {Aurelian\ Cernea},
       DOI = {10.1007/s11228-006-0018-4},
       URL = {https://doi.org/10.1007/s11228-006-0018-4},
}

@article {palais,
    AUTHOR = {Palais, Richard S.},
     TITLE = {Extending diffeomorphisms},
   JOURNAL = {Proc. Amer. Math. Soc.},
  FJOURNAL = {Proceedings of the American Mathematical Society},
    VOLUME = {11},
      YEAR = {1960},
     PAGES = {274--277},
      ISSN = {0002-9939,1088-6826},
   MRCLASS = {57.00},
  MRNUMBER = {117741},
MRREVIEWER = {A.\ M.\ Gleason},
       DOI = {10.2307/2032968},
       URL = {https://doi.org/10.2307/2032968},
}

@book {pedregal,
    AUTHOR = {Pedregal, Pablo},
     TITLE = {Parametrized measures and variational principles},
    SERIES = {Progress in Nonlinear Differential Equations and their
              Applications},
    VOLUME = {30},
 PUBLISHER = {Birkh\"auser Verlag, Basel},
      YEAR = {1997},
     PAGES = {xii+212},
      ISBN = {3-7643-5697-9},
   MRCLASS = {49-02 (35Q72 49J40 49Q20 73C50 73V25)},
  MRNUMBER = {1452107},
MRREVIEWER = {Tom\'a\v s\ Roub\'i\v cek},
       DOI = {10.1007/978-3-0348-8886-8},
       URL = {https://doi.org/10.1007/978-3-0348-8886-8},
}

@article {pedregal2,
    AUTHOR = {Pedregal, Pablo},
     TITLE = {Optimization, relaxation and {Y}oung measures},
   JOURNAL = {Bull. Amer. Math. Soc. (N.S.)},
  FJOURNAL = {American Mathematical Society. Bulletin. New Series},
    VOLUME = {36},
      YEAR = {1999},
    NUMBER = {1},
     PAGES = {27--58},
      ISSN = {0273-0979,1088-9485},
   MRCLASS = {49Q20 (49J10 74B20 74P99)},
  MRNUMBER = {1655480},
MRREVIEWER = {Martin\ Fuchs},
       DOI = {10.1090/S0273-0979-99-00774-0},
       URL = {https://doi.org/10.1090/S0273-0979-99-00774-0},
}

@book {vinter-book,
    AUTHOR = {Vinter, Richard},
     TITLE = {Optimal control},
    SERIES = {Modern Birkh\"auser Classics},
      NOTE = {Paperback reprint of the 2000 edition},
 PUBLISHER = {Birkh\"auser Boston, Ltd., Boston, MA},
      YEAR = {2010},
     PAGES = {xx+507},
      ISBN = {978-0-8176-4990-6},
   MRCLASS = {49-02 (49-01)},
  MRNUMBER = {2662630},
       DOI = {10.1007/978-0-8176-8086-2},
       URL = {https://doi.org/10.1007/978-0-8176-8086-2},
}

@article{WARGA197541,
author={Warga, J.},
title = {Necessary conditions without differentiability assumptions in optimal control},
journal = {Journal of Differential Equations},
volume = {18},
number = {1},
pages = {41-62},
year = {1975},
issn = {0022-0396},
doi = {https://doi.org/10.1016/0022-0396(75)90080-7},
}

@book{warga2014optimal,
  title={Optimal control of differential and functional equations},
  author={Warga, Jack},
  year={1972},
  publisher={Academic press}
}
\end{document}